\newcommand{\BA}{{\mathbb {A}}}
\newcommand{\BD}{{\mathbb {D}}}
\newcommand{\Bk}{{\mathbf {k}}}
\newcommand{\BFD}{{\mathbf {D}}}
\newcommand{\CFH}{\mathcal {FH}}
\newcommand{\CDH}{\mathcal {DH}}
\newcommand{\CP}{{\mathcal {P}}}
\newcommand{\GL}{{\mathrm{GL}}}
\newcommand{\Hom}{{\mathrm{Hom}}}
\newcommand{\Lie}{{\mathrm{Lie}}}
\newcommand{\rank}{{\mathrm{rank}}}
\newcommand{\tr}{{\mathrm{tr}}}
\newcommand{\cover}[1]{\widetilde{#1}}
\newcommand{\vsp}{{\vspace{0.2in}}}
\newcommand{\con}{\textit{C}}
\newcommand{\Diff}{\operatorname{DO}}
\newcommand{\diag}{\operatorname{diag}}
\newcommand{\oleft}{\operatorname{left}}
\newcommand{\oright}{\operatorname{right}}
\newcommand{\open}{\operatorname{open}}
\newcommand{\oM}{\operatorname{M}}
\newcommand{\oO}{\operatorname{O}}
\newcommand{\oSp}{\operatorname{Sp}}
\newcommand{\oSO}{\operatorname{SO}}
\newcommand{\oS}{\operatorname{S}}
\newcommand{\oR}{\operatorname{R}}
\newcommand{\oT}{\operatorname{T}}
\newcommand{\oU}{\operatorname{U}}
\newcommand{\oZ}{\operatorname{Z}}
\newcommand{\oD}{\textit{D}}
\newcommand{\g}{\mathfrak g}
\newcommand{\gC}{{\mathfrak g}_{\C}}
\renewcommand{\a}{\mathfrak a}
\newcommand{\n}{\mathfrak n}
\renewcommand{\l}{\mathfrak l}
\newcommand{\s}{\mathfrak s}
\renewcommand{\sl}{\mathfrak s \mathfrak l}
\newcommand{\gl}{\mathfrak g \mathfrak l}
\newcommand{\fM}{\mathfrak M}
\newcommand{\fZ}{\mathfrak Z}
\newcommand{\Z}{\mathbb{Z}}
\newcommand{\C}{\mathbb{C}}
\newcommand{\R}{\mathbb R}
\renewcommand{\H}{\mathbb{H}}
\newcommand{\K}{\mathbb{K}}
\renewcommand{\S}{\mathbf S}
\newcommand{\la}{\langle}
\newcommand{\ra}{\rangle}
\newcommand{\be}{\begin {equation}}
\newcommand{\ee}{\end {equation}}
\newcommand{\bee}{\begin {equation*}}
\newcommand{\eee}{\end {equation*}}
\def\bks{{\backslash}}
\def\diag{{\rm diag}}
\theoremstyle{Theorem}
\newtheorem{prp}{Proposition}[section]
\newtheorem{lemp}[prp]{Lemma}
\newtheorem{prpp}[prp]{Proposition}
\newtheorem{coj}{Conjecture}[section]
\newtheorem{thmc}[coj]{Theorem}
\newtheorem{prpc}[coj]{Proposition}
\theoremstyle{Plain}
\newtheorem{rmk}[coj]{Remark}
\theoremstyle{Plain}
\theoremstyle{Theorem}
\newtheorem{dfn}{Definition}[section]
\newtheorem{lemd}[dfn]{Lemma}
\newtheorem{sublemd}[dfn]{Sublemma}
\newtheorem{dfnd}[dfn]{Definition}
\begin{document}
\renewcommand{\theequation}{\arabic{equation}}
\numberwithin{equation}{section}

\theoremstyle{Theorem}

\newtheorem{dtt}{Definition}[section]
\newtheorem{tvanishing}[dtt]{Lemma}
\newtheorem{tvanishing2}[dtt]{Lemma}

\newtheorem{conj1.1}{Conjecture}[section]
\newtheorem{conj1.2}[conj1.1]{Conjecture}
\newtheorem{conj1.3}[conj1.1]{Conjecture}
\newtheorem{first-occur}[conj1.1]{Proposition}
\newtheorem{dum}[conj1.1]{Definition}
\newtheorem{UchiM}[conj1.1]{Proposition}

\newtheorem{localization}{Lemma}[section]
\newtheorem{dslice}[localization]{Definition}
\newtheorem{localization0}[localization]{Lemma}
\newtheorem{localization1}[localization]{Lemma}
\newtheorem{localization2}[localization]{Lemma}

\newtheorem{dmp}{Definition}[section]
\newtheorem{tvanishing3}[dmp]{Lemma}
\newtheorem{duc}[dmp]{Definition}
\newtheorem{localization3}[dmp]{Lemma}
\newtheorem{localization4}[dmp]{Sublemma}
\newtheorem{localization5}[dmp]{Lemma}

\newtheorem{m2}{Proposition}[section]
\newtheorem{m23}[m2]{Lemma}

\newtheorem{m3}{Proposition}[section]
\newtheorem{m32}[m3]{Proposition}
\newtheorem{m33}[m3]{Lemma}
\newtheorem{m34}[m3]{Lemma}
\newtheorem{gl3p}[m3]{Lemma}
\newtheorem{gl3z1}[m3]{Lemma}
\newtheorem{gl3z0}[m3]{Lemma}
\newtheorem{gl3w1}[m3]{Lemma}
\newtheorem{gl3w0}[m3]{Lemma}
\newtheorem{gl3g0}[m3]{Lemma}
\newtheorem{nz1}[m3]{Lemma}
\newtheorem{gl3g0w1}[m3]{Lemma}
\newtheorem{gl3g1}[m3]{Lemma}
\newtheorem{fourier}[m3]{Lemma}

\newtheorem{m4}{Proposition}[section]
\newtheorem{m4slice}[m4]{Lemma}
\newtheorem{m4stable}[m4]{Lemma}
\newtheorem{m42}[m4]{Lemma}
\newtheorem{m43}[m4]{Lemma}
\newtheorem{m44}[m4]{Lemma}
\newtheorem{m45}[m4]{Lemma}
\newtheorem{m46}[m4]{Lemma}
\newtheorem{h6z4}[m4]{Proposition}
\newtheorem{h6z4l}[m4]{Lemma}

\newtheorem{m6}{Proposition}[section]
\newtheorem{m62}[m6]{Lemma}
\newtheorem{m63}[m6]{Lemma}
\newtheorem{m64}[m6]{Lemma}
\newtheorem{m65}[m6]{Lemma}
\newtheorem{m66}[m6]{Lemma}
\newtheorem{m67}[m6]{Lemma}

\newtheorem{smallm}{Proposition}[section]
\newtheorem{smallm0}[smallm]{Lemma}
\newtheorem{smallm1}[smallm]{Lemma}
\newtheorem{smallm2}[smallm]{Lemma}
\newtheorem{smallm3}[smallm]{Lemma}
\newtheorem{smallm4}[smallm]{Lemma}
\newtheorem{mp4}[smallm]{Lemma}
\newtheorem{mp5}[smallm]{Proposition}

\newtheorem{mp1}{Lemma}[section]
\newtheorem{mp2}[mp1]{Lemma}
\newtheorem{mp3}[mp1]{Lemma}
\newtheorem{mp6}[mp1]{Lemma}
\newtheorem{mp7}[mp1]{Lemma}
\newtheorem{um-transitivity}[mp1]{Lemma}
\newtheorem{transitivity}[mp1]{Proposition}
\newtheorem{symmetry}[mp1]{Proposition}

\newtheorem{gelfand}{Proposition}[section]

\newtheorem{quat}{Theorem}[section]
\newtheorem{quat1}[quat]{Theorem}
\newtheorem{quat2}[quat]{Theorem}
\newtheorem{quat3}[quat]{Theorem}
\newtheorem{quat4}[quat]{Theorem}

\newtheorem{Whittaker}{Theorem}[section]
\newtheorem{Whittaker2}[Whittaker]{Theorem}
\newtheorem{Whittaker3}[Whittaker]{Theorem}
\newtheorem{Whittaker4}[Whittaker]{Theorem}

\title[Ginzburg-Rallis models]{Uniqueness of Ginzburg-Rallis models:
the Archimedean case}

\author [D. Jiang] {Dihua Jiang}
\address{School of Mathematics\\
University of Minnesota\\
206 Church St. S.E., Minneapolis\\
MN 55455, USA} \email{dhjiang@math.umn.edu}

\author [B. Sun] {Binyong Sun}
\address{Academy of Mathematics and Systems Science\\
Chinese Academy of Sciences\\
Beijing, 100190,  P.R. China} \email{sun@math.ac.cn}

\author [C.-B. Zhu] {Chen-Bo Zhu}
\address{Department of Mathematics\\
National University of Singapore\\
2 Science drive 2\\
Singapore 117543} \email{matzhucb@nus.edu.sg}

\subjclass[2000]{22E46 (Primary), 11F70 (Secondary)}
\keywords{Ginzburg-Rallis models; 
tempered generalized functions; descent}


\begin{abstract}
In this paper, we prove the uniqueness of Ginzburg-Rallis models
in the archimedean case. As a key ingredient, we introduce a new
descent argument based on two geometric notions attached to
submanifolds, which we call metrical properness and unipotent
$\chi$-incompatibility.
\end{abstract}

\maketitle


\section{Introduction and main results}
\label{intro}

In year 2000, Ginzburg and Rallis formulated a conjecture to
characterize the nonvanishing of central values of partial exterior
cube L-functions attached to irreducible cuspidal automorphic
representations of $\GL_6$ in terms of certain periods
(\cite{GR00}). This is analogous to the Jacquet conjecture for the
triple product L-functions for $\GL_2$ (established in full by
Harris and Kudla in \cite{HK04}), and to the Gross-Prasad conjecture
for classical groups (\cite{GP92, GP94, GJR04, GJR05,GJR09}).

To be precise, let $\BA$ be the ring of adeles of a number field
$\Bk$. Fix a nontrivial unitary character $\psi_\BA$ of
$\Bk\bks\BA$, and a (non-necessarily unitary) character
$\chi_{\BA^\times}$ of $\Bk^\times \bks\BA^\times$. For any
quaternion algebra $\BFD$ over $\Bk$, denote
$G_{\BFD}=\GL_3(\BFD)$, and $S_\BFD$ its subgroup consisting of
elements of the form
\begin{equation}
           \left[
            \begin{array}{ccc}
              a&b&d\\ 0&a&c\\0 & 0&a
              \end{array}
           \right].
\end{equation}
Define a character $\chi_{S_\BFD}$ of $S_\BFD(\BA)$ by
\begin{equation}
   \chi_{S_\BFD}\left(
   \left[
            \begin{array}{ccc}
              1&b&d\\ 0&1&c\\0 & 0&1
              \end{array}
           \right]\cdot
\left[
            \begin{array}{ccc}
              a&0&0\\ 0&a&0\\0 & 0&a
              \end{array}
           \right]
\right)=\chi_{\BA^\times}(\det(a))\,\psi_{\BA}(\tr(b+c)),
\end{equation}
where $\det$ and $\tr$ stand for the reduced norm and the reduced
trace, respectively.

Let $\varphi_\BFD$ be an automorphic form on $G_\BFD(\Bk)\bks
G_\BFD(\BA)$. The Ginzburg-Rallis period
$\CP_{\chi_{S_\BFD}}(\varphi_\BFD)$ of $\varphi_\BFD$ is defined
by the following integral
\begin{equation*}
\CP_{\chi_{S_\BFD}}(\varphi_\BFD)= \int_{\BA^\times
S_\BFD(\Bk)\bks\S_\BFD(\BA)}\varphi_\BFD(s)(\chi_{S_\BFD}(s))^{-1}ds,
\end{equation*}
where $\BA^\times$ is identified with the center of $G_\BFD(\BA)$.
The Ginzburg-Rallis conjecture can then be stated as follows.

\begin{coj}
\label{conj1.1} {\rm (Ginzburg-Rallis, \cite{GR00})} Let $\pi$ be an
irreducible cuspidal automorphic representation of
 $\GL_6(\BA)$ with central character $\chi_{\BA^\times}^2$. For any
quaternion algebra $\BFD$ over $\Bk$, denote by $\pi_\BFD$ the
generalized Jacquet-Langlands correspondence of $\pi$, which is
either zero or an irreducible cuspidal automorphic representation
of $G_\BFD(\BA)$. Consider the irreducible representation
$\Lambda^3\otimes \C^1$ of the L-group $\GL_6(\C)\times
\GL_1(\C)$, where $\Lambda^3$ is the exterior cube product of the
standard representation of $\GL_6(\C)$, and $\C^1$ is the standard
representation of $\GL_1(\C)$. The partial L-function
$L^S(s,\pi\otimes \chi_{\BA^\times}^{-1},\Lambda^3\otimes \C^1)$
does not vanish at $s=\frac{1}{2}$ if and only if there exists a
unique quaternion algebra $\BFD$ such that
\begin{itemize}
\item[(a)] the period $\CP_{\chi_{S_\BFD}}(\varphi_\BFD)$ is
nonzero for some $\varphi_\BFD\in\pi_\BFD$; and \item[(b)] for any
quaternion algebra $\BFD'$ which is not isomorphic to $\BFD$, the
period $\CP_{\chi_{S_{\BFD'}}}(\varphi_{\BFD'})$ is zero for every
$\varphi_{\BFD'}\in\pi_{\BFD'}$.
\end{itemize}
\end{coj}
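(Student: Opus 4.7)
The plan is to follow the paradigm established for analogous conjectures (Jacquet's triple product conjecture proved by Harris--Kudla, and the Gross--Prasad conjecture), which naturally decompose the problem into a local part and a global part. The first step is to reduce to a local-global principle: the non-vanishing of the global period $\CP_{\chi_{S_\BFD}}(\varphi_\BFD)$ should be shown equivalent to the conjunction of (a) non-vanishing of the central value $L(\tfrac{1}{2},\pi\otimes\chi_{\BA^\times}^{-1},\Lambda^3\otimes\C^1)$ and (b) non-vanishing of the local Hom spaces $\Hom_{S_\BFD(\Bk_v)}(\pi_{\BFD,v},\chi_{S_\BFD,v})$ at every place $v$ of $\Bk$.

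For the local side, two ingredients are required. The first is local multiplicity one, $\dim \Hom_{S_\BFD(\Bk_v)}(\pi_{\BFD,v},\chi_{S_\BFD,v})\le 1$; this is precisely the subject of the present paper at the archimedean places, to be combined with the corresponding non-archimedean statements. The second is a local dichotomy: for each $v$, exactly one of the two quaternion $\Bk_v$-algebras carries a nonzero invariant functional, with the choice controlled by a local epsilon factor attached to $\Lambda^3\otimes\C^1$, in analogy with the Gross--Prasad local conjecture.

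The global step is to realize the period via a relative trace formula: compare the RTF attached to the triple $(G_\BFD, S_\BFD, \chi_{S_\BFD})$ with a Kuznetsov-type distribution on $\GL_6(\BA)$ whose spectral expansion features the desired L-value. Matching orbital integrals, including a fundamental lemma in this setting, would yield an Ichino-style product formula expressing the square of the global period as the central L-value times a product of local period integrals. The uniqueness of the quaternion algebra $\BFD$ in the conclusion is then forced by the local dichotomy together with the product formula for epsilon factors: the collection $\{\BFD_v\}$ singled out place by place must satisfy a single global Hasse-invariant constraint, which picks out a unique $\BFD/\Bk$.

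The main obstacle will be constructing and comparing the relative trace formulas with the required orbital integral matching, and in particular producing a geometric distribution on $\GL_6$ whose spectral side genuinely isolates $L(\tfrac{1}{2},\pi,\Lambda^3\otimes\C^1)$. Unlike the triple product setting, there is no classical Rankin--Selberg integral of the right shape for the exterior cube L-function, so the natural avenue is to start from the integral representation of this L-function proposed by Ginzburg and Rallis in \cite{GR00}, establish non-vanishing and stability for the local zeta integrals, and derive the Euler factorization by careful unfolding. Throughout, the local multiplicity one established in the present paper is indispensable, as it is what allows the abstract factorization of the global invariant functional into a product of local ones, turning the RTF comparison into a genuine statement about L-values.
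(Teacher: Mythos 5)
What you have written is a strategy for a statement the paper does not prove. Conjecture \ref{conj1.1} is precisely that: a \emph{conjecture}, attributed to Ginzburg and Rallis \cite{GR00}, and the paper explicitly states that only partial results toward it are known (pointing to \cite{GR00} and \cite{GJ}). The paper's own contribution is confined to the \emph{local} uniqueness statement, Conjecture \ref{conj1.2}, which it proves over archimedean fields (Theorem \ref{thm:mainA}, via Theorem \ref{thm:mainB}); this local multiplicity one is one necessary ingredient for the global conjecture, but nothing in the paper purports to establish Conjecture \ref{conj1.1} itself. So there is no proof in the paper against which your sketch can be compared, and your sketch should not be presented as a proof of the statement.

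As a research program your outline is reasonable and consistent with the Harris--Kudla, Gan--Gross--Prasad, and Ichino--Ikeda paradigms, but essentially every step you list is itself an open problem of substantial depth, not a lemma one can cite: (i) a local dichotomy between the two quaternion algebras over $\Bk_v$ governed by an $\varepsilon$-factor for $\Lambda^3\otimes\C^1$ has not been proved; (ii) no relative trace formula comparison of the type you describe has been set up for this model, and the required orbital integral matching and fundamental lemma do not exist; (iii) the Ginzburg--Rallis global integral does not come with the unfolding, nonvanishing, and stability of local zeta integrals that your Euler factorization step requires; and (iv) there is no established Ichino-type period formula for this pair. Your sketch also leaves untouched the hard direction of the conjecture, namely that nonvanishing of the partial $L$-value at $s=\tfrac12$ forces nonvanishing of the period on some $\pi_\BFD$; this is a global non-vanishing assertion that cannot follow from multiplicity one and local dichotomy alone. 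In short, what you have is a plausible roadmap, not a proof, and the role of the present paper within that roadmap is only to supply the archimedean local uniqueness needed to factorize a global period functional---should all the other ingredients one day become available.
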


See \cite{GR00} and \cite{GJ} for some partial results on the
conjecture.

\vsp
 We consider the corresponding local theory. Let $\K$ be a
local field of characteristic zero. Fix a nontrivial unitary
character $\psi_\K$ of $\K$, and an arbitrary character
$\chi_{\K^\times}$ of $\K^\times$. For any quaternion algebra
$\BD$ over $\K$, denote $G_\BD=\GL_3(\BD)$ and define its subgroup
$S_\BD$ as in the number field case. We also define the local
analogy $\chi_{S_\BD}$ of $\chi_{S_\BFD}$, by the same formula in
terms of the characters $\psi_\K$ and $\chi_{\K^\times}$.

If $\K$ is nonarchimedean, we let $V_\BD$ be an irreducible smooth
representation of $G_\BD$, and if $\K$ is archimedean, let $V_\BD$
be an irreducible representation of $G_\BD$ in the class $\CFH$.
The notion of representations in the class $\CFH$ will be
explained in Section \ref{theoremA}.

As in the proof of Jacquet conjecture, in order to tackle the
Ginzburg-Rallis Conjecture, the first basic property that we should
establish is

\begin{coj}
\label{conj1.2} The Ginzburg-Rallis models on $V_\BD$ is unique up
to scalar, i.e.,
$$
  \dim \Hom_{S_\BD} (V_\BD,\C_{\chi_{S_\BD}})\leq 1,
$$
where $\C_{\chi_{S_\BD}}$ is the one dimensional representation of
$S_\BD$ given by the character $\chi_{S_\BD}$.
\end{coj}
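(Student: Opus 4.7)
The plan is to adapt the Gelfand-Kazhdan method to the archimedean setting through the descent framework announced in the abstract. First, using Casselman-Wallach theory (available since $V_\BD$ belongs to the class $\CFH$), one reduces the representation-theoretic bound to a distribution-theoretic statement: it suffices to show that every tempered generalized function $f$ on $G_\BD$ satisfying
\[ f(s_1 g s_2)=\chi_{S_\BD}(s_1)\,\chi_{S_\BD}(s_2)^{-1} f(g), \quad s_1,s_2\in S_\BD, \]
is invariant under a suitable anti-involution $\sigma$ of $G_\BD$. A natural choice is $\sigma(g)=w_0\,g^\iota\, w_0$, where $\iota$ is the standard anti-involution of $\GL_3(\BD)$ (transpose combined with entrywise quaternionic conjugation) and $w_0$ is the antidiagonal permutation matrix. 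A direct matrix computation shows $\sigma(S_\BD)=S_\BD$ and $\chi_{S_\BD}\circ\sigma=\chi_{S_\BD}$, since the scalars $b$ and $c$ of the defining formula enter symmetrically. Thus the twisted Gelfand-Kazhdan formalism reduces Conjecture \ref{conj1.2} to the assertion that every such $(S_\BD\times S_\BD)$-equivariant tempered generalized function on $G_\BD$ is $\sigma$-invariant.

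The second step is to stratify $G_\BD$ by $(S_\BD\times S_\BD)$-orbits, whose representatives can be enumerated via a Bruhat-type analysis of $S_\BD$-double cosets. For each orbit $\CO$ one faces two scenarios: if $\sigma(\CO)\neq\CO$, then $\sigma$ pairs the orbit with its image and any equivariant distribution supported on $\CO\cup\sigma(\CO)$ is automatically $\sigma$-invariant by symmetrization; if $\sigma(\CO)=\CO$, one must show that equivariant generalized functions supported on $\CO$, together with all their transverse jets, are $\sigma$-invariant. The open orbit yields essentially a single $\chi_{S_\BD}$-twisted density, which is visibly $\sigma$-fixed, and one then proceeds inward stratum by stratum toward the closed orbits.

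The principal difficulty, and the reason a new method is needed, lies in controlling the transverse jet-bundle contributions at singular strata whose stabilizers contain large unipotent pieces, where the standard orbit analysis in the spirit of Aizenbud-Gourevitch-Rallis-Schiffmann does not immediately deliver the needed vanishing. The plan is to deploy the two new geometric notions of the paper: \emph{metrical properness} of the normal slice to a stratum provides the analytic control needed to lift a pointwise jet-level computation to a statement about all distributions supported in a neighborhood of the stratum, while \emph{unipotent $\chi$-incompatibility} supplies the algebraic criterion that whenever $\chi_{S_\BD}$ restricts nontrivially to a unipotent subgroup of the stabilizer, the corresponding transverse distributional section must vanish. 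Combining the two yields an inductive descent: each problematic stratum either is killed directly by $\chi$-incompatibility or is normally sliced by a metrically proper submanifold to a strictly smaller problem amenable to the same treatment. The hard part, and where I expect most of the effort to go, will be verifying unipotent $\chi$-incompatibility at the handful of self-dual critical orbits, which requires an explicit case analysis of the unipotent radicals of the stabilizers tested against the defining formula for $\chi_{S_\BD}$.
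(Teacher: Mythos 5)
Your overall plan—reduce to an anti-invariance statement for tempered generalized functions via Gelfand--Kazhdan, then kill strata using metrical properness and unipotent $\chi$-incompatibility—matches the paper's framework in outline, and your choice of anti-involution is an innocuous variant of the paper's ($x\mapsto x^\tau$; conjugating by $w_0$ preserves $S_\BD$, and the Gelfand--Kazhdan criterion works either way). However, there are several genuine gaps.

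First, you propose stratifying $G_\BD$ by $(S_\BD\times S_\BD)$-orbits. These form continuous families (positive-dimensional moduli), so there is no finite stratification and no ``open orbit'' to start an inward induction from. The paper instead works with $P$-$P^\tau$ double cosets for a parabolic $P\supset S$, of which there are finitely many (twenty-one in the split case), and descends to $S\times S^\tau$-slices only locally inside chosen cosets. Second, your claim that whenever $\sigma(\CO)\neq\CO$ the contribution of $\CO\cup\sigma(\CO)$ is ``automatically $\sigma$-invariant by symmetrization'' is false: the quantity one must annihilate is $f-f^\sigma$, which is $\sigma$-anti-invariant, and a nonzero anti-invariant equivariant distribution supported on $\CO\sqcup\sigma(\CO)$ certainly exists whenever an equivariant distribution on $\CO$ exists. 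Nothing is gained by the disjointness.

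Third, and most importantly, you omit the Casimir operator entirely. The entire mechanism of metrical properness is that the bi-invariant Laplacian-type operator $\Delta_\BD$ is \emph{transversal} to a metrically proper submanifold, so that an eigen-distribution of $\Delta_\BD$ supported there must vanish (Lemma~\ref{tvanishing3}). Without the hypothesis that $f$ is a $\Delta_\BD$-eigenvector, metrical properness has no force. The paper's Step (a), using Shalika's transversality of the vector fields $X_{\oleft},X_{\oright}$ against the Casimir, is what kills seventeen of the twenty-one double cosets before the two new notions are even brought in; your proposal has no analogue of this step. Finally, you do not indicate the endpoint of the descent—the reduction to the trilinear model for $\GL_2$ and the pair $(\GL_2,\GL_1)$, resolved via the oscillator-representation input of Proposition~\ref{first-occur}—without which the induction has no base case.
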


This conjecture has been expected since the work \cite{GR00} and was first discussed with details in \cite{J08}. In her
Minnesota thesis (directed by the first named author), Nien proved
Conjecture \ref{conj1.2} in the nonarchimedean case (\cite{Nien}).
We remark that there is a generalization of the Ginzburg-Rallis
models to $\GL _{3n}$, which may be viewed as the ``three block"
version of the Whittaker models for $\GL _n$. As noted in
\cite{Nien}, the local uniqueness property is not expected to hold
for the generalized Ginzburg-Rallis models for $\GL _{3n}$ with
$n>2$.

\vsp The first main purpose of this paper is to prove the
archimedean case of Conjecture \ref{conj1.2}, which requires substantially
more delicate analysis than the nonarchimedean case.

From now on, we will assume that $\K$ is the archimedean local field
$\R$ or $\C$.

\begin{thmc}
\label{thm:mainA} Let $V_\BD$ be an irreducible representation of $G_\BD$ in the class $\CFH$. Then
\[
  \dim \Hom_{S_\BD} (V_\BD,\C_{\chi_{S_\BD}})\leq 1.
\]
\end{thmc}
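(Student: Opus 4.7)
The plan is to adapt the Gelfand--Kazhdan method to this setting. I would first fix an anti-involution $\sigma$ of $G_\BD$ that stabilizes $S_\BD$ setwise and satisfies $\chi_{S_\BD}\circ\sigma=\chi_{S_\BD}^{-1}$; a natural candidate is $\sigma(g)=w_0\,{}^{\!t}\!g\,w_0$, where $w_0$ is the anti-diagonal permutation matrix and ${}^{t}$ is the transpose with respect to the reduced trace on $\BD$. Using the standard reduction principle for representations in the class $\CFH$, Theorem~A follows once one shows that every tempered generalized function $f$ on $G_\BD$ satisfying the bi-equivariance
\[
f(s_1 g s_2)\;=\;\chi_{S_\BD}(s_1)\,\chi_{S_\BD}(s_2)\,f(g)\qquad(s_1,s_2\in S_\BD,\ g\in G_\BD)
\]
also satisfies $f\circ\sigma=f$. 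This turns the multiplicity-one problem into a question about equivariant tempered generalized functions on the symmetric space $S_\BD\bks G_\BD/S_\BD$.

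\textbf{Stratification and descent.} Next I would decompose $G_\BD$ into $S_\BD\times S_\BD$ orbits and organize them into a $\sigma$-stable, finite stratification $G_\BD=Z_0\sqcup Z_1\sqcup\cdots\sqcup Z_N$ by locally closed subsets ordered by closure, with $Z_0$ the open Bruhat-type stratum on which $\sigma$ automatically preserves orbits (hence any equivariant $f$ is automatically $\sigma$-invariant there). For each lower stratum $Z_i$, Bernstein's localization principle in the form applicable to tempered generalized functions, combined with Harish--Chandra-type descent, reduces the desired vanishing to the study of equivariant tempered generalized functions on the normal slice $N_p Z_i$ at a base point $p\in Z_i$, under the action of the stabilizer $\Stab(p)\subset S_\BD\times S_\BD$.

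\textbf{The new descent ingredients.} This is where the paper's two notions enter. For each non-open stratum I would exhibit a unipotent subgroup $U$ of the relevant stabilizer acting on the normal slice so that $U$ admits no $\chi$-equivariant distribution on its orbits (``unipotent $\chi$-incompatibility'') and so that the orbit map of $U$ is ``metrically proper'' in the sense to be developed in the paper --- a uniform geometric condition strong enough to propagate vanishing from orbits to arbitrary normal jets, not merely to pointwise supports. Together, these two properties force every equivariant tempered generalized function supported on the closure of $Z_i$ to be supported on strictly smaller strata; an induction on $i$ then eliminates all strata except $Z_0$, finishing the proof.

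\textbf{Main obstacle.} The serious difficulty, which has no nonarchimedean counterpart, is that distributions on a smooth manifold are not determined by their values on individual orbits of a group action: one must additionally control all transversal Taylor coefficients along each stratum. The entire point of introducing metrical properness is to obtain this transversal control in a form compatible with iterated descent. Thus the main technical burden will be a case-by-case verification, for each stratum $Z_i$ of the double-coset decomposition, that a suitable unipotent subgroup is available with both (i) $\chi$-incompatibility and (ii) metrically proper orbit geometry on $N_p Z_i$; the deepest strata, where $\Stab(p)$ is largest and the allowable unipotent subgroups are most constrained, will be the hardest to handle and will dictate the precise formulation of the two geometric notions.
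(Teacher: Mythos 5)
Your proposal captures the correct top-level shape — reduce Theorem~A to a statement about equivariant tempered generalized functions via a Gelfand--Kazhdan criterion, then argue by a finite $\tau$-stable stratification — but it goes wrong in two places that matter.

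First, the claim that the open stratum is handled ``automatically'' is false, and this is in fact where the hardest work of the paper lies. For the Whittaker case (Shalika) the open Bruhat cell is a single orbit of $\bar B\times N$, so equivariance pins the distribution down up to scalar. For the Ginzburg--Rallis pair, $S\times S$ is $32$-dimensional acting on the $36$-dimensional $\GL_6$, so every double coset, including the open one, fibers over a positive-dimensional quotient, and $\tau$-invariance on the open part is not free. The paper must first apply a Shalika-type transversality argument (Proposition~\ref{mp5}, using that $f$ is a Casimir eigenvector) to cut from $G$ to a four-coset open piece $G'$, then perform a Levi-type descent inside $G'$ through $\GL_4\times\GL_2$, $\GL_2\times\GL_2\times\GL_2$ and $\GL_3\times\GL_1$, and finally reduce the residual open piece $G''$ to two genuine linear-model problems (trilinear form for $\GL_2$ and the pair $(\GL_2,\GL_1)$) whose uniqueness is supplied by Przebinda's oscillator-representation result (Proposition~\ref{first-occur}). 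Your proposal omits this entire chain, and without it there is no way to conclude on the open part.

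Second, your description of metrical properness misstates the notion: it is not a property of a unipotent subgroup's orbit map on the normal slice. In the paper it is a condition on the stratum itself — that at every point its tangent space sits inside a proper nondegenerate subspace of the ambient pseudo-Riemannian tangent space (Definition~\ref{dmp}) — and it earns its keep only in tandem with a Laplacian-type bi-invariant operator: such an operator is automatically transversal to a metrically proper submanifold, and Shalika's Lemma~\ref{tvanishing2} then kills distributions supported there. Your formulation, which attributes the transversal-jet control to geometry of a unipotent orbit map, would not produce the required transversality of a second-order operator. Unipotent $\chi$-incompatibility (Definition~\ref{duc}) is the separate, purely group-theoretic mechanism — an element $\phi(z)$ stabilizing $z$, acting unipotently on the normal fibre, with $\chi(\phi(z))\neq 1$ — and it does not need the Casimir. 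The two notions are deployed in alternation along a filtration (Definition~\ref{dum}, Lemma~\ref{UchiM}); treating them as two facets of one unipotent-orbit phenomenon, as you do, collapses the distinction and would not let the induction close.

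As a smaller point, the paper works directly with plain transpose $\tau$ and the pair $(S, S^\tau)$ with two distinct one-sided characters (Proposition~\ref{gelfand}), rather than with a $w_0$-twisted involution fixing $S$ setwise; your choice $\sigma(g)=w_0\,{}^t g\,w_0$ and the claim $\chi_{S_\BD}\circ\sigma=\chi_{S_\BD}^{-1}$ would need to be checked and is in any case unnecessary — the conjugate-by-$w_0$ gymnastics can be sidestepped entirely. These discrepancies are cosmetic compared with the missing treatment of the open part.
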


Note that the notion of representations in the class $\CFH$
includes the requirement of moderate growth. This has the
implication that
\[
 \Hom_{S_\BD} (V_\BD,\C_{\chi_{S_\BD}})=0,
\]
if one replaces the additive character $\psi_\K$ with one which is
not unitary.

Ginzburg-Rallis models are so called ``mixed models", as the group
$S_\BD$ is neither unipotent nor reductive. On the other hand, we
have the Whittaker models and linear models, where the subgroup
involved is unipotent or reductive, respectively. By now we know
that uniqueness of Whittaker models is relatively easy to
establish (see Section \ref{sub-whittaker} for a short proof). The
study of uniqueness of linear models was initiated by
Jacquet-Rallis in \cite{JR96}, and there have been a number of
recent advances in this direction (see \cite{AGRS, AG-multiplicity, SZ}, for
example). We remark that in each case, a good understanding of
algebraic and geometric structure of the orbital decomposition is
required. (The task is made easier by geometric invariant theory,
see \cite{AG2}.) Although in some special cases, one may reduce
uniqueness of mixed models to that of linear models (c.f.
\cite{JR96, AGJ} and Remark \ref{bessel} of this section), there
is still a lack of general techniques to treat the mixed model
problems (save for a few low rank cases; see for example
\cite{BR07}). Besides a proof of Theorem \ref{thm:mainA}, another
main purpose of this paper is to introduce a descent method in the
archimedean case that reduces uniqueness of mixed models to that
of linear models. We carry out the descent process for the
Ginzburg-Rallis model, which is considered as an exceptional
model, and is also sufficiently complicated to reveal difficulties
in general archimedean mixed model problems.

\vsp

We introduce some notations. For any natural number $n$, denote by
$\gl_n(\K)$ the space of $n\times n$ matrices with entries in $\K$.
When the quaternion algebra $\BD$ is split, we fix an identification
of $\BD$ with $\gl_2(\K)$, and then $G_\BD$ is identified with
$\GL_6(\K)$. For a square matrix $x$, if its entries are from $\K$,
denote by $x^\tau$ its transpose. If $\BD$ is not split and $x\in
G_\BD=\GL_3(\BD)$, set
\[
  x^\tau=\textrm{the transpose of $\bar x$},
\]
where ``$\bar{\phantom{x}}$" denotes the (element-wise) quaternionic
conjugation.

Define the real trace form $\la\,,\,\ra_\R$ on the Lie algebra
$\gl_3(\BD)$ of $G_\BD$ by
\begin{equation}\label{rtrace}
    \la x,y\ra_{\R}=\left\{\begin{array}{l}
                        \textrm{the real part of the trace of $xy$,
                        $\,\,$ if $\BD$ is split},\\
                        \textrm{the reduced trace of $xy$, \qquad \qquad othewise}.\\
                        \end{array}
                        \right.
\end{equation}
Denote by $\Delta_\BD$ the Casimir element with respect to
$\la\,,\,\ra_\R$, which is viewed as a bi-invariant differential
operator on $G_\BD$.

We will see in Section \ref{theoremA} that by (a general form of)
the Gelfand-Kazhdan criterion, Theorem \ref{thm:mainA} is implied by
the following

\begin{thmc}
\label{thm:mainB}
Let $f$ be a
tempered generalized function on $G_\BD$, which is an eigenvector of
$\Delta_\BD$. If $f$ satisfies
\begin{equation}\label{fsx}
  f(sx)=f(xs^\tau)=\chi_{S_\BD}(s)f(x), \quad \textrm{for all } s\in S_\BD,
\end{equation}
then \[f(x)=f(x^\tau).\]
\end{thmc}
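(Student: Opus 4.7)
The natural approach is a Gelfand-Kazhdan style descent. Since $\Delta_\BD$ is bi-invariant and commutes with the anti-involution $\tau$, the antisymmetrization $g(x) := f(x) - f(x^\tau)$ is again a tempered generalized function, still an eigenvector of $\Delta_\BD$ with the same eigenvalue, and still satisfies $g(sx) = g(xs^\tau) = \chi_{S_\BD}(s)\,g(x)$ for every $s \in S_\BD$. By construction $g$ is $\tau$-antisymmetric, so the target reduces to showing that any such $g$ must vanish identically.

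To prove vanishing, I would stratify $G_\BD$ by $S_\BD \times S_\BD$-invariant locally closed subvarieties under the twisted action $(s_1, s_2)\cdot x = s_1 x s_2^\tau$, organized so that $\tau$ interacts nicely with the action via the identity $\tau(s_1 x s_2^\tau) = s_2\,x^\tau\, s_1^\tau$. On each stratum I would analyze the space of $\tau$-antisymmetric equivariant generalized functions using the two geometric tools advertised in the introduction. On a stratum that is ``metrically proper'', a generalized function with the required equivariance should be determined by its restriction to a normal slice, reducing the problem to a lower-dimensional transversal situation and setting up an inductive descent. On a stratum on which the unipotent radical of $S_\BD$ acts in a manner ``$\chi_{S_\BD}$-incompatible'' with the character, no nonzero equivariant distribution can be supported there, so $g$ vanishes on such a stratum automatically. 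A Bernstein-type localization principle then patches these local conclusions into the global vanishing of $g$.

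The main obstacle, I expect, will be exhibiting a stratification of $G_\BD$ that is simultaneously $S_\BD \times S_\BD$-invariant, $\tau$-compatible, and fine enough that every stratum falls into one of the two good categories. A handful of borderline strata near the boundary will resist both properties; it is precisely at that point that the Casimir-eigenvector hypothesis on $f$ (inherited by $g$) should be brought in, via a Harish-Chandra-type regularity or holonomic-system argument, to propagate vanishing from the open dense orbit into the lower-dimensional closures. Finally, each step of the descent must be checked to preserve temperedness, a routine but nontrivial bookkeeping task given that membership in the class $\CFH$ incorporates moderate growth into the hypotheses.
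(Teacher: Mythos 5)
Your opening move — replace $f$ by its antisymmetrization $g=f-f^\tau$, observe that $g$ inherits the $\chi_{S_\BD}$-equivariance, the $\Delta_\BD$-eigenvector property, and the temperedness, and reduce to proving $g=0$ — is exactly the paper's, and your instinct to stratify $G_\BD$ by $S_\BD\times S_\BD$-invariant pieces and deploy the two geometric notions is the right framework. But the way you propose to use the tools, and where the Casimir hypothesis actually enters, diverges from what works, and the endgame is missing.

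First, you misdescribe the role of metrical properness. It is not a device for restricting $g$ to a normal slice and descending; it is a \emph{vanishing} criterion. A Laplacian-type differential operator (which $\Delta_\BD$ is, for the bi-invariant pseudo-Riemannian metric coming from the real trace form) is automatically transversal to any metrically proper submanifold $Z$, so $\con^{-\infty}(M;Z;\Delta_\BD)=0$ by the Shalika transversality lemma. Thus both geometric notions — unipotent $\chi$-incompatibility and metrical properness — give direct vanishing of generalized functions supported on the stratum; the former uses only the equivariance, the latter crucially uses the eigenvector hypothesis.

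Second, your speculation that the Casimir hypothesis gets used at the ``borderline strata'' via a Harish-Chandra regularity or holonomic-system argument is not how the proof runs. The eigenvector hypothesis is used twice, both times through Shalika's transversality mechanism: (i) to eliminate seventeen of the twenty-one $P$-$P^\tau$ double cosets in one stroke, by writing $\Delta$ on $\chi$-equivariant generalized functions as a transversal vector field plus a tangential operator, which reduces the problem to an explicit open submanifold $G'$ consisting of the four remaining double cosets; and (ii) inside the metrical-properness vanishing criterion just described. Without anticipating step (i), the combinatorial explosion of strata (and the failure of both geometric criteria on many of them) would be unmanageable.

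Third, and most importantly, the two geometric tools do not by themselves close the argument. After peeling away everything that has the combined $\oU_\chi\oM$ property, you are left with $H$-orbits of explicit submanifolds — $\GL_2\times\GL_2$ (the trilinear model for $\GL_2$) and $\GL_3\times\GL_1$ (the Jacquet-Shalika model) — on which neither metrical properness nor $\chi$-incompatibility applies. Those base cases require a genuinely different input: Przebinda's first-occurrence theorem for the determinant character in the oscillator representation of $(\oO(E),\oSp(2k))$, $k<\dim E$, together with (for the $\GL_3$ piece) a Fourier-transform uncertainty-principle argument. Your proposal gives no mechanism for this terminal step, and without it the induction never bottoms out. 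So while you have correctly sketched the scaffolding and identified the two new geometric notions as the engine, the proposal as written has a gap at both ends: the initial Shalika reduction that makes the stratification tractable, and the oscillator-representation input that finishes it.
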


The notion of tempered generalized functions will be explained in
Section \ref{snash}. We remark that the equalities in the theorem
are to be understood as equalities of generalized functions, and
$f(sx)$ denotes the left translate of $f$ by $s^{-1}$. Similar
notations apply throughout the article.

\vsp

Assume now that $\BD$ is split. Thus $G=\GL_6$. (We drop the
subscript $\BD$, and the coefficient field $\K$ in all notations.)
The non-split case, which is simpler, will be investigated at the
end of Section \ref{theoremB}.

Following a well-known scheme of Bruhat, we first decompose
\[ G=\bigsqcup_{R}G_R
\]
into $P$-$P^\tau$ double cosets, where
\[
  P=\left\{\,\left[\begin{array}{ccc} a_1&b&d\\ 0&a_2&c\\0&0&a_3\\
                   \end{array}\right]\in G\,
 \right\}
\]
is a parabolic subgroup of $G$ containing $S$.

The proof of Theorem \ref{thm:mainB} will consist of three steps and
will involve three types of arguments:

\vsp \noindent (a) the transversality of certain vector fields to
all except four $G_R$'s, among the twenty one $P$-$P^\tau$ double
cosets of $G$. The technique is due to Shalika \cite{Shalika}. This
allows us to focus the attention to the open submanifold $G'$ of $G$
consisting of the four exceptional double cosets.

\vsp \noindent (b) a descent argument based on two new notions
attached to submanifolds, which we call metrical properness
(Definition \ref{dmp}) and unipotent $\chi$-incompatibility
(Definition \ref{duc}), as well a synthesis of these two notions which we call
$\oU_{\chi}\oM$ property (Definition \ref{dum}).
This lies at the heart of our approach and
forms the main part of our argument. It leads us eventually to two
linear model problems: the uniqueness of trilinear models for
$\GL_2$, and the multiplicity one property for the pair
$(\GL_2,\GL_1)$.

\vsp \noindent (c) use of the oscillator representation to
conclude the uniqueness of the two afore-mentioned linear models.

\vsp

For Step (c) which is relatively easy, we just appeal to the following

\begin{prpc}{\rm (\cite[Theorem C.7]{Pr89})}
\label{first-occur} Let $E$ be a finite dimensional non-degenerate
quadratic space over $\K$, and let the orthogonal group $\oO(E)$ act
on $E^k$ diagonally, where $k$ is a positive integer. If $k<\dim E$,
and if a tempered generalized function $f$ on $E^k$ is
$\oSO(E)$-invariant, then $f$ is $\oO(E)$-invariant.
\end{prpc}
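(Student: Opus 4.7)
The plan is to reduce the problem to ruling out a sign-character-equivariant distribution. Since $\oO(E)/\oSO(E)\cong\Z/2\Z$ is generated by the class of any hyperplane reflection $r\in\oO(E)$, $f$ is $\oO(E)$-invariant if and only if $h:=f-r^\ast f=0$. Such an $h$ is tempered, still $\oSO(E)$-invariant (as $\oSO(E)$ is normal in $\oO(E)$), and satisfies $r^\ast h=-h$, so it transforms by the nontrivial sign character $\epsilon$ of $\oO(E)/\oSO(E)$. It suffices therefore to prove that no nonzero tempered $(\oO(E),\epsilon)$-equivariant generalized function exists on $E^k$.

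The decisive geometric input is the inequality $k<\dim E$: for any tuple $(v_1,\dots,v_k)\in E^k$, the span of the $v_i$'s has dimension at most $k<\dim E$, so its orthogonal complement is nonzero, and since isotropic vectors cut out a proper subvariety, this complement contains an anisotropic vector $u$. The reflection $r_u$ then lies in $\oO(E)\setminus\oSO(E)$ and fixes the tuple, so $\epsilon$ restricts nontrivially to every $\oO(E)$-stabilizer in $E^k$. By Frobenius reciprocity for equivariant generalized functions on a single $\oO(E)$-orbit, this rules out any $\epsilon$-equivariant distribution supported scalarly on one orbit.

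To globalize, I would stratify $E^k$ by the rank of the Gram matrix $(\la v_i,v_j\ra)$: the top stratum is open and dense, while closed strata correspond to tuples lying in proper nondegenerate subspaces of $E$. A Harish-Chandra-style normal-slice descent along each closed stratum reduces the question to the same statement for fewer vectors $k'\le k$ inside a nondegenerate subspace $E'\subset E$, with the crucial inequality $k'<\dim E'$ preserved, so the proof closes by an induction on $\dim E$. The main technical obstacle is the slice argument at singular strata: one has to check both that temperedness survives restriction to the normal slice and that the $\epsilon$-twist descends with the right normalization of the transverse density bundle, which is precisely where the moderate-growth hypothesis on $f$ is essential.
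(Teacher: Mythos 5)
The paper does not actually prove this proposition; it cites Przebinda's Theorem C.7 (and a reformulation via the theta correspondence from Lee--Zhu), both of which rest on the oscillator representation --- this is what the Introduction calls ``Step (c).'' Your approach, a direct geometric argument via the sign character and a stratification of $E^k$ by the Gram matrix, is therefore genuinely different from the route the paper appeals to. Unfortunately it has a gap.

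The problematic step is the assertion that for any tuple $(v_1,\dots,v_k)$ with $k<\dim E$, the orthogonal complement of $V:=\mathrm{span}(v_i)$ contains an anisotropic vector, so that the stabilizer of the tuple in $\oO(E)$ contains a reflection and hence meets $\oO(E)\setminus\oSO(E)$. This fails precisely when $V^\perp$ is totally isotropic, equivalently $V^\perp\subseteq V$. Your remark that ``isotropic vectors cut out a proper subvariety'' only helps when the form on $V^\perp$ is not identically zero, which is exactly the case being excluded. A concrete counterexample: $\K=\R$, $E=\R^2$ with split form $q(x,y)=xy$, $k=1$, $v_1=e_1$. Then $V^\perp=\R e_1$ is totally isotropic, and a direct computation shows $\mathrm{Stab}_{\oO(1,1)}(e_1)=\{I\}\subset\oSO(1,1)$. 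More generally, whenever $V$ has a nonzero radical and $\dim V+\dim(\mathrm{rad}\,V)=\dim E$ (e.g. $V$ is a maximal totally isotropic subspace of a split even-dimensional $E$ spanned by $\le k$ of the $v_i$), the same collapse occurs, so the difficulty is not isolated. On these strata the claim ``$\epsilon$ restricts nontrivially to every stabilizer'' simply evaporates, and with it the Frobenius-reciprocity elimination.

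A second, smaller caution: even on the strata where a reflection $r_u$ with $\epsilon(r_u)=-1$ does lie in the stabilizer, ruling out an $\epsilon$-equivariant generalized function supported on a closed orbit requires more than nontriviality of $\epsilon$ on the stabilizer. By the Bruhat/Schwartz filtration of generalized functions along a submanifold (the same mechanism invoked in the proof of Lemma~\ref{localization3}), one must compare $\epsilon$ with the characters of the stabilizer appearing in $\mathrm{Sym}^{\bullet}$ of the conormal space, twisted by a ratio of modular characters. You should check explicitly that $r_u$ acts by a determinant-one transformation on the normal space to the orbit; it is plausible but not automatic, and without it the exclusion does not follow.

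To repair the argument you would need a separate device for the degenerate strata where $V^\perp\subseteq V$ --- for instance a Fourier-transform/uncertainty argument in the spirit of Lemma~\ref{fourier} of this paper, or ultimately the oscillator-representation route that Przebinda and Lee--Zhu actually take, which avoids an orbit-by-orbit analysis entirely.
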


The above proposition may also be stated as that the determinant
character of $\oO(E)$ does not occur in Howe duality correspondence
of $(\oO(E), \oSp (2k))$ if $k<\dim E$. In fact the determinant
character occurs if and only if $k\geq \dim E$. See \cite[Theorem
2.2]{LZ97}.

\vsp The descent process reveals a very interesting interplay
between the Ginzburg-Rallis model and other (smaller) models. The
first model occurring is as follows. Take the maximal Levi
subgroup $G_{4,2}=\GL_4\times\GL_2$ of $G$, and write
\begin{equation}
\label{ds42} S_{4,2}=G_{4,2}\cap
S=\left\{\begin{pmatrix}a&b&0\\0&a&0\\0&0&a\end{pmatrix}\mid
a\in\GL_2, \,b\in \gl_2 \right\}.
\end{equation} In the course of proof of Theorem \ref{thm:mainB}, we
find that for any irreducible representation
$\pi$ of $G_{4,2}$ in the class $\CFH$,
\begin{equation}\label{us42}
   \dim \Hom_{S_{4,2}}(\pi,\chi_{S_{4,2}})\leq 1,
\end{equation}
where $\chi_{S_{4,2}}$ is the restriction of the character
$\chi_{S}$ to $S_{4,2}$. A proof of (\ref{us42}) will be given in
Section \ref{sub-gl4}.

\begin{rmk} \label{bessel} This model may be viewed as the Bessel
model for the orthogonal group pair $(\oO_6,\oO_3)$, via the
(incidental) identification of low rank algebraic groups. In the
p-adic case, for a general pair $(\oO_m,\oO_n)$ with $m>n$ and
having different parity (and its analog for unitary groups), Gan,
Gross and Prasad reduce the uniqueness of Bessel models to the
Multiplicity One Theorems proved by Aizenbud, Gourevitch, Rallis
and Schiffmann (\cite{AGRS,GGP09}). In the archimedean case, the
uniqueness of Bessel models for general linear groups, unitary
groups and orthogonal groups was proved by the authors
(\cite{JSZ09}), using a different reduction technique (from the
p-adic case) and the archimedean Multiplicity One Theorems proved
in \cite{SZ}. Note that the latter for general linear groups is
independently due to Aizenbud and Gourevitch
(\cite{AG-multiplicity}).
\end{rmk}

To examine the case $(G_{4,2}, S_{4,2})$, we perform a further
descent. Consider the maximal Levi subgroup
$$
   G_{2,2,2}=\GL_2\times\GL_2\times\GL_2
$$
of $G_{4,2}$  and the intersection
$$
   S_{2,2,2}=G_{2,2,2}\cap S_{4,2}=\GL_2^\Delta,
$$
which embeds diagonally into $G_{2,2,2}$. This is the well-known
case of the trilinear model for $\GL_2$. See Section
\ref{sub-trilinear}.

An interesting phenomenon here is that in order to complete the
proof for the case $(G_{4,2}, S_{4,2})$, one must also consider the
maximal Levi subgroup
$$
G_{3,1,2}=\GL_3\times\GL_1\times\GL_2
$$
of $G_{4,2}=\GL_4\times\GL_2$. This case reduces essentially to
the case $(\GL_3,S_3)$, where
\[ S_3=\left\{
s(c,d,a)=\begin{pmatrix}1&0&d\\0&1&c\\0&0&1\end{pmatrix}
\cdot\begin{pmatrix}a& & \\ &1&\\ & &a\end{pmatrix} | \ c,d \in
\K, a \in \K^{\times} \right\}.
\]
The corresponding character of $S_3$ is given by
$$
\chi_{S_3}(s(c,d,a))=\chi_{\K^{\times}}(a)\psi _{\K}(d).
$$
This is a mixed model. It should come as no surprise that the pair
$(\GL_3,S_3)$ is a special case of the model introduced by
Jacquet-Shalika (\cite{JS90}) to construct the exterior square
L-functions for $\GL_{2n+1}$. The uniqueness for this case was not
known for any $n$. In the course of our proof for Theorem
\ref{thm:mainB}, we shall prove the uniqueness for the pair
$(\GL_3,S_3)$ over archimedean local fields. (The p-adic case
follows similarly.) See Section \ref{sub-gl3}.

\vsp We now describe the contents and the organization of this
paper.
In Section \ref{generality},
we review some generalities on differential operators, generalized
and invariant generalized functions, basics of Nash manifolds and
the associated notion of tempered-ness. In Section \ref{UXM}, we
define the notions of metrical properness, unipotent
$\chi$-incompatibility, and their synthesis, $\oU_{\chi}\oM$
property. Based on these three new notions, we give respectively
three vanishing results on certain spaces of generalized
functions (Lemmas \ref{tvanishing3}, \ref{localization3}, \ref{UchiM}).
In Section \ref{smallsub}, we prove the transversality of
certain vector fields to all but four of the $P$-$P^{\tau}$ double
cosets, which as mentioned allows us to focus our attention to an
open submanifold $G'$ only. In Section \ref{sm4} and Section
\ref{gl6}, we show (through lengthy but straightforward
computations) that a certain submanifold $Z_4$ of $G_{4,2}$, and a
certain submanifold $Z_6$ of $G'$ has $\oU_{\chi}\oM$ properties,
respectively. This eventually reduces our problem to the submanifolds
$\GL_2\times \GL_2$ and $\GL_3\times \GL_1$ of $G$. In Sections
\ref{gl222}  and \ref{sgl3}, we show that certain spaces of
quasi-invariant tempered generalized functions on $\GL_2\times
\GL_2$ and  $\GL_3\times \GL_1$ vanishes.

The complete proof of Theorem \ref{thm:mainB} will be given in
Section \ref{theoremB}. In Section \ref{theoremA}, we derive Theorem
\ref{thm:mainA} from Theorem \ref{thm:mainB}. Finally, in Section
\ref{misc}, we record uniqueness of models occurring in the process
of descent. In addition and as further evidence for the relevance of
the notion of unipotent $\chi$-incompatibility (for mixed models, as
opposed to linear models), we give a quick proof of the uniqueness
of the Whittaker models based on this notion.

\vsp

\noindent Acknowledgements: Dihua Jiang is supported in part by NSF (USA) grant
DMS--0653742 and by a Distinguished Visiting Professorship at the
Academy of Mathematics and System Sciences, the Chinese Academy of
Sciences. Binyong Sun is supported by NUS-MOE grant R-146-000-102-112, and by NSFC grants 10801126 and 10931006.
Chen-Bo Zhu is supported by NUS-MOE grant R-146-000-102-112.

\section{Generalities}
\label{generality}

We emphasize that materials of this section are all known. In
particular nothing is due to the authors.

\subsection{Generalized functions and differential operators}
\label{sub2.1}
 Let $M$ be a smooth manifold.
Denote by $\con_0^\infty(M)$ the space of compactly supported
(complex valued) smooth functions on $M$, which is a complete
locally convex topological vector space under the usual inductive
smooth topology. Denote by $\oD^{-\infty}(M)$ the strong dual of
$\con_0^\infty(M)$, whose members are called distributions on $M$. A
distribution on $M$ is called a smooth density if under local
coordinate, it is the multiple of a smooth function with the
Lebesgue measure. Under the inductive smooth topology, the space
$\oD_0^\infty(M)$ of compactly supported smooth densities is again a
complete locally convex topological vector space, which is
(non-canonically) isomorphic to $\con_0^\infty(M)$. Denote by
$\con^{-\infty}(M)$ the strong dual of $\oD_0^\infty(M)$, whose
members are called generalized functions on $M$. The space
$\con^{\infty}(M)$ of smooth functions in canonically and
continuously embedded in $\con^{-\infty}(M)$, with a dense image.

If $\phi:M\rightarrow M'$ is a smooth map of smooth manifolds,
then the pushing forward sends compactly supported distributions
on $M$ to compactly supported distributions on $M'$. If
furthermore $\phi$ is a submersion, then the pushing forward
induces a continuous linear map
\[
  \phi_*: \oD_0^\infty(M)\rightarrow \oD_0^\infty(M').
\]
We define the pulling back
\begin{equation}
\label{pullbackmap}
  \phi^*: \con^{-\infty}(M')\rightarrow \con^{-\infty}(M)
\end{equation}
as the transpose of $\phi_*$, which extends the usual pulling back
of smooth functions. The map $\phi^*$ is injective if $\phi$ is a
surjective submersion.

\vsp \noindent {\bf Remark}: Pulling back is not canonically defined
for distributions. For this reason, we work with generalized
functions instead of distributions. \vsp

For $k\in \Z$, denote by $\Diff(M)_k$ the Fr\'{e}chet space of
differential operators on $M$ of order at most $k$, which by
convention is $0$ if $k<0$. It is well-known that every differential
operator $D:\con^{\infty}(M)\rightarrow \con^{\infty}(M)$ may be
continuously extended to $D:\con^{-\infty}(M)\rightarrow
\con^{-\infty}(M)$.

Recall that we have the principal symbol map
\[
   \sigma_k: \Diff(M)_k\rightarrow \Gamma^{\infty}(M,
   \oS^k(\oT(M)\otimes_\R  \C)),
\]
where $\oT(M)$ is the real tangent bundle of $M$, $\oS^k$ stands for
the $k$-th symmetric power, and $\Gamma^{\infty}$ stands for smooth
sections. The continuous linear map $\sigma_k$ is specified by the
following rule:
\[\begin{aligned}
\sigma_k(X_1 X_2\cdots X_k)(x)&=X_1(x) X_2(x)\cdots X_k(x), \quad \text{and}\\
&\sigma_k|_{\Diff(M)_{k-1}}=0,
\end{aligned}\]
for all $x\in M$ and all (smooth real) vector fields $X_1,X_2,
\cdots , X_k$ on $M$.

Let $Z$ be a (locally closed) submanifold of $M$. Write
\[
  \operatorname{N}_Z(M)=\oT(M)|_Z/\oT(Z)
\]
for the normal bundle of $Z$ in $M$. Denote by
\[
   \sigma_{k,Z}: \Diff(M)_k\rightarrow \Gamma^{\infty}(Z,
   \oS^k(\operatorname{N}_Z(M)\otimes_\R \C))
\]
the map formed by composing $\sigma_k$ with the restriction map to
$Z$, and followed by the quotient map
\[
   \Gamma^{\infty}(Z, \oS^k(\oT(M)|_Z\otimes_\R  \C))\rightarrow \Gamma^{\infty}(Z,
   \oS^k(\operatorname{N}_Z(M)\otimes_\R \C)).
\]

\begin{dfn}
\label{dtt}
\begin{itemize}
\item[(a)] A vector field $X$ on $M$ is said to be tangential to
$Z$ if $X(z)$ is in the tangent space $\oT_z(Z)$ for all $z\in Z$,
and transversal to $Z$ if $X(z)\notin \oT_z(Z)$ for all $z\in Z$;
more generally \item[(b)] a differential operator $D$ is said to
be tangential to $Z$ if for every point $z\in Z$ there is an open
neighborhood $U_z$ in $M$ such that $D|_{U_z}$ is a finite sum of
differential operators of the form $\varphi X_1 X_2\cdots X_r$,
where $\varphi$ is a smooth function on $U_z$, $r\geq 0$, and
$X_1,X_2,\cdots, X_r$ are vector fields on $U_z$ which are
tangential to $U_z\cap Z$. For $D\in \Diff(M)_k$, it is said to be
transversal to $Z$ if $\sigma_{k,Z}(D)$ does not vanish at any
point of $Z$.
\end{itemize}
\end{dfn}


We introduce some notations. For a locally closed subset $Z$ of
$M$, denote
\begin{equation}
  \con^{-\infty}(M;Z)=\{f\in\con^{-\infty}(U)|\ \text{supp} (f) \subseteq Z\},
\end{equation}
where $U$ is any open subset of $M$ containing $Z$ as a closed
subset. This definition is independent of $U$. For any
differential operator $D$ on $M$, denote
\begin{equation}
   \con^{-\infty}(M;Z; D)=\{f\in \con^{-\infty}(M;Z)|\ Df=0\}.
\end{equation}

We record the following lemma, which is due to Shalika  (c.f. proof
of Proposition 2.10 in \cite{Shalika}).

\begin{lemd}\label{tvanishing2}
Let $D_1$ be a differential operator on $M$ of order $k\geq 1$,
which is transversal to a submanifold $Z$ of $M$. Let $D_2$ be a
differential operator on $M$ which is tangential to $Z$. Then
\[
   \con^{-\infty}(M;Z;D_1+D_2)=0.
\]
\end{lemd}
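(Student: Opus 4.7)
The plan is local. Since the statement concerns supports, it suffices to show $f$ vanishes in a neighborhood of any point $p\in Z$. Choose coordinates $(x',x'')=(x_1,\ldots,x_r,x_{r+1},\ldots,x_n)$ near $p$ so that $Z = \{x'=0\}$, and use the standard structure theorem for distributions supported on a submanifold to write
\[
f = \sum_{|\beta|\leq N} u_\beta(x'')\otimes \partial_{x'}^\beta\delta(x')
\]
locally, with $u_\beta \in \con^{-\infty}(Z)$. Assuming $f\neq 0$ near $p$, let $N$ be the largest index such that $u_\beta \neq 0$ for some $|\beta|=N$; call this the transverse order of $f$. The goal is to derive a contradiction by identifying a non-zero transverse-order $(N+k)$ component of $(D_1+D_2)f$.

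Before the main computation, I use the transversality hypothesis to normalize $D_1$. Since $\sigma_{k,Z}(D_1)$ is nonzero at $p$, after a linear change of the transverse coordinates the coefficient of the pure transverse monomial $\partial_{x_1}^k$ in $D_1$ is nonzero at $p$, hence invertible in a neighborhood. Multiplying $(D_1+D_2)f=0$ on the left by the inverse of this coefficient (and noting that $\con^\infty$-multiples of tangential operators remain tangential) allows me to assume
\[
D_1 = \partial_{x_1}^k + R,
\]
where $R$ consists of terms of total order $<k$, or of order $k$ but involving some $\partial_{x''}$, or carrying a factor of $x'$.

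The two structural facts needed are: (i) any tangential operator $D$ preserves transverse order. This reduces to the case of a single tangential vector field $X$ and of multiplication by a smooth function. Writing $X = \sum_i a_i\partial_{x''_i} + \sum_j c_j \partial_{x'_j}$ with $c_j|_{Z}=0$, hence $c_j = \sum_l x'_l c_{j,l}$, the $\partial_{x''_i}$ terms clearly preserve transverse order, and $x'_l \partial_{x'_j}$ acts via the distributional identity $x'_l\partial_{x'}^\gamma \delta(x') = -\gamma_l\partial_{x'}^{\gamma-e_l}\delta(x')$, mapping transverse order $|\beta|$ to transverse order $|\beta|$. Smooth multipliers expand as $\varphi(0,x'') + O(x')$, the $O(x')$ piece strictly dropping transverse order. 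In particular $D_2 f$ has transverse order at most $N$, and $Rf$ has transverse order at most $N+k-1$. (ii) The operator $\partial_{x_1}^k$ strictly raises the transverse order by $k$, with leading part $\sum_{|\beta|=N}u_\beta(x'')\,\partial_{x'}^{\beta+ke_1}\delta(x')$. Combining (i) and (ii), the transverse-order-$(N+k)$ component of $(D_1+D_2)f$ is precisely this leading sum, and $(D_1+D_2)f=0$ forces
\[
\sum_{|\beta|=N} u_\beta(x'')\,\partial_{x'}^{\beta+ke_1}\delta(x') = 0;
\]
linear independence of the $\partial_{x'}^{\beta+ke_1}\delta(x')$ as distributions in $x'$ (with coefficients pulled back from $Z$) then yields $u_\beta=0$ for all $|\beta|=N$, contradicting the choice of $N$.

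I expect the only real obstacle to be the bookkeeping in (i): one must confirm that no commutator or Leibniz effect under composition secretly raises the transverse order when a tangential operator is applied to a distribution supported on $Z$. The normalization of $D_1$ is a standard change-of-variables manipulation, and the conclusion from the top-order equation is elementary linear independence of distributional derivatives of $\delta$.
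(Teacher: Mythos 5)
Your overall strategy --- the L.~Schwartz structure theorem for generalized functions supported on a submanifold combined with a transverse-order bookkeeping argument --- is the standard one and is essentially Shalika's approach; note that the paper itself records the lemma without proof and cites \cite{Shalika}. Your step~(i) on tangential vector fields, coefficients vanishing on $Z$, and smooth multipliers is correct as stated.

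The gap is in the normalization of $D_1$, and it is not cosmetic. You claim that after a linear change of transverse coordinates and division by the coefficient of $\partial_{x_1}^k$, one may assume $D_1=\partial_{x_1}^k+R$ with every summand of $R$ of total order $<k$, or of order $k$ involving some $\partial_{x''}$, or carrying a factor of $x'$. This fails whenever $Z$ has codimension $\geq 2$ and $k\geq 2$: for $D_1=\partial_{x_1}^2+\partial_{x_2}^2$, which is transversal (its normal symbol $\xi_1^2+\xi_2^2$ is a nonzero quadratic form), the ``remainder'' $R=\partial_{x_2}^2$ is pure transverse of order $k$ with constant coefficient $1$. Such a term raises transverse order by $k$, so it does contribute to the order-$(N+k)$ component and your identification of that component with $\sum_{|\beta|=N}u_\beta\,\partial_{x'}^{\beta+ke_1}\delta$ is wrong. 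This case matters for the paper: Lemma~\ref{tvanishing3} applies the present lemma with $k=2$ and $Z$ metrically proper of arbitrary codimension. The correct statement is that the transverse-order-$(N+k)$ part of $D_1 f$ is the product, in the symmetric algebra $\oS^\bullet(\operatorname{N}_Z(M)\otimes_\R\C)$ (a polynomial ring pointwise), of the full normal symbol $\sigma_{k,Z}(D_1)$ with the leading polynomial $u=\sum_{|\beta|=N}u_\beta\,\xi^\beta$. Transversality makes the symbol a nonzero element of $\C[\xi_1,\ldots,\xi_r]$ at each point of $Z$, and multiplication by a nonzero polynomial is injective (integral domain); the injectivity persists when the coefficients $u_\beta$ live in an arbitrary complex vector space, since the map is the base change of a full-rank $\C$-linear map. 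Equivalently, and closest to your setup: keep your $\xi_1$ so that the $\xi_1^k$-coefficient of the symbol is a unit, let $d$ be the maximal $\xi_1$-degree among those $\beta$ with $|\beta|=N$ and $u_\beta\neq 0$, and read off the $\xi_1$-degree-$(k+d)$ coefficients of $\sigma\cdot u$; this forces $u_{\beta_0}=0$ for every $\beta_0$ of $\xi_1$-degree $d$, contradicting the choice of $d$. With this correction the proof closes.
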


\subsection{Invariant generalized functions} \label{inv}
Let $H$ be a Lie group, acting smoothly on a manifold $M$. Fix a
character $\chi$ on $H$. Denote by
\begin{equation}
  \con^{-\infty}_\chi(M)= \{f\in \con^{-\infty}(M)|\ f(hx)=\chi(h)f(x), \ \textrm{for }h\in
  H\}
\end{equation}
the space of $\chi $-equivariant generalized functions.

\vsp Let $\fM$ be a submanifold of $M$ and denote
\[
  \rho_{\fM}: H\times \fM\rightarrow M
\]
the action map.

\begin{dfnd}
\label{dslice} \begin{itemize} \item[(a)] We say that $\fM$ is a
local $H$ slice of $M$ if $\rho_{\fM}$ is a submersion, and an $H$
slice of $M$ if $\rho_{\fM}$ is a surjective submersion.
\item[(b)]
Given two submanifolds $\fZ\subset \fM$ of $M$, we say that $\fZ$ is
relatively $H$ stable in $\fM$ if
\[
  \fM\cap H \fZ=\fZ.
\]
\end{itemize}
\end{dfnd}

Note that the relative stable condition amounts to saying that
$H\times \fZ$ is a union of fibres of the action map $\rho_{\fM}$.
We first prove the following two lemmas in a general setting.

\begin{lemd}\label{subsub1}
Let $Z$ be a subset of $\R^m$. Assume that $Z\times \R^n$ is a
submanifold of $\R^{m+n}$. Then $Z$ is a submanifold of $\R^m$.
\end{lemd}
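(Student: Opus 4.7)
The plan is to work locally near an arbitrary point $z_0 \in Z$ and exhibit $Z$ in a neighborhood of $z_0$ as the zero set of a submersion on $\R^m$. Set $W := Z \times \R^n$, which by hypothesis is a submanifold of $\R^{m+n}$; write $k = \dim W$ and $\ell = m+n-k$. Using the regular-level-set characterization of submanifolds, I would first choose an open neighborhood $\tilde U$ of $(z_0,0)$ in $\R^{m+n}$ together with a smooth submersion $\tilde f \colon \tilde U \to \R^{\ell}$ such that $W \cap \tilde U = \tilde f^{-1}(0)$. After shrinking, one may take $\tilde U$ to be a box $U_1 \times U_2$ with $U_1 \subseteq \R^m$ and $U_2 \subseteq \R^n$ open and containing $z_0$ and $0$ respectively; then $W \cap \tilde U = (Z \cap U_1) \times U_2$.

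The key observation I would exploit is that $W$ is translation-invariant along the $\R^n$ factor: for each $z \in Z \cap U_1$ the entire slice $\{z\} \times U_2$ lies inside $W$, so $y \mapsto \tilde f(z,y)$ vanishes identically on $U_2$. Differentiating in $y$ shows that $\partial_y \tilde f \equiv 0$ on $(Z \cap U_1) \times U_2$, and in particular $\partial_y \tilde f$ vanishes at $(z_0,0)$. Combined with the surjectivity of the full differential $d\tilde f_{(z_0,0)} \colon \R^{m+n} \to \R^{\ell}$, this forces the partial derivative $\partial_z \tilde f |_{(z_0,0)} \colon \R^m \to \R^{\ell}$ to be surjective as well (in particular $\ell \leq m$, equivalently $k \geq n$, which reflects the fact that $\{0\} \times \R^n \subseteq T_{(z_0,0)} W$).

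By continuity of the derivative, $\partial_z \tilde f$ remains surjective on some smaller box $U_1' \times U_2'$ containing $(z_0,0)$. I would then define $f \colon U_1' \to \R^{\ell}$ by $f(z) := \tilde f(z,0)$; its differential $df_z = \partial_z \tilde f|_{(z,0)}$ is surjective, so $f$ is a submersion, and
\[
f^{-1}(0) \;=\; \{\, z \in U_1' \mid (z,0) \in W \cap \tilde U \,\} \;=\; Z \cap U_1'.
\]
This exhibits $Z$ near $z_0$ as a regular level set of a submersion, and hence as a submanifold of $\R^m$ of dimension $k - n$. I do not anticipate a genuine obstacle here: the only nontrivial input is the vanishing of $\partial_y \tilde f$, which is essentially tautological from the product structure $W \cap \tilde U = (Z \cap U_1) \times U_2$, and the rest is a direct combination of the regular-value theorem with the $\R^n$-translation invariance of $W$.
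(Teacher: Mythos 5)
Your proof is correct and is essentially the same as the paper's: both realize $Z\times\R^n$ locally as $\phi^{-1}(0)$ for a submersion $\phi$, observe that $d\phi$ kills the $\R^n$ directions along the product, and conclude that the restriction of $\phi$ to the slice $\R^m\times\{0\}$ is still a submersion cutting out $Z$ locally. The only cosmetic difference is that you make the dimension count $\ell=m+n-k$ explicit and invoke openness of the surjectivity condition, whereas the paper simply checks surjectivity of the restricted differential at each point of $Z\cap U$; neither adds or omits any substantive step.
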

\begin{proof}
Let $z_0\in Z$. Then there is an open neighborhood $U\times V$ of
$(z_0,0)$ in $\R^{m+n}=\R^m\times \R^n$ and a submersion
\[
  \phi: U\times V\rightarrow \R^d
\]
such that
\[
  (U\times V) \cap (Z\times \R^n)= \phi^{-1}(0).
\]

Denote by
\[
  \phi':U\rightarrow \R^d
\]
the restriction of $\phi$ to $U=U\times \{0\}$. Then
\[
  Z\cap U=\phi'^{-1}(0),
\]
and therefore it suffices to show that $\phi'$ is submersive at
every point $z\in Z\cap U$.

Since $\phi$ is submersive, we have that
\[
  d\phi|_{z,0}(\R^m\oplus \R^n)=\R^d.
\]
Since $\phi$ is constant on $\{z\}\times V$, we have that
\[
  d\phi|_{z,0}(\R^n)=0.
\]
Therefore,
\[
  d\phi|_{z,0}(\R^m)=\R^d,
\]
which implies that $\phi'$ is submersive at $z$.
\end{proof}

\begin{lemd}\label{subsub2}
Let $\rho:M_1\rightarrow M_2$ be a surjective
submersion of smooth manifolds. Let $Z_1$ be a submanifold of
$M_1$ which is a union of fibres of $\rho$. Then $Z_2:=\rho(Z_1)$
is submanifold of $M_2$, and the restriction $\rho_0:
Z_1\rightarrow Z_2$ of $\rho$ is also a surjective submersion.
Furthermore, if $Z_1$ is closed in $M_1$, then $Z_2$ is closed in
$M_2$.
\end{lemd}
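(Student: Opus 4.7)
The plan is to reduce everything to a local picture by invoking the normal form for submersions, and then appeal to Lemma~\ref{subsub1} for the submanifold statement. Concretely, around any $x \in Z_1$, the submersion $\rho$ admits open neighbourhoods $U_1 \ni x$ and $U_2 \ni \rho(x)$ together with a diffeomorphism $\phi : U_1 \to U_2 \times V$ (with $V$ open in some $\R^n$) such that $\rho|_{U_1}$ is identified via $\phi$ with the first projection $\pi_1 : U_2 \times V \to U_2$. This reduces the submanifold question to the coordinate model.

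The fulcrum of the argument is the local identity
\[
\phi(Z_1 \cap U_1) = (Z_2 \cap U_2) \times V.
\]
The inclusion $\subseteq$ uses that $Z_1$ is a union of $\rho$-fibres: if $(u,v) \in \phi(Z_1 \cap U_1)$ and $v' \in V$, then $(u,v')$ lies in the same $\rho$-fibre as $(u,v)$, so $(u,v') \in Z_1$, and $u = \pi_1(u,v) \in Z_2$. The reverse inclusion amounts to the identification $Z_2 \cap U_2 = \rho(Z_1 \cap U_1)$: any $u \in Z_2 \cap U_2$ comes from some point of $Z_1$ lying in the fibre over $u$, and picking any $(u,v) \in U_1$, the fibre-union hypothesis places $(u,v)$ in $Z_1 \cap U_1$. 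Once this identity is established, Lemma~\ref{subsub1} applied to $Z_2 \cap U_2 \subset U_2$ shows that $Z_2$ is a submanifold of $M_2$ near $\rho(x)$, and in the chart $\rho_0$ is the projection $(Z_2 \cap U_2) \times V \to Z_2 \cap U_2$, which is manifestly a surjective submersion.

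For the closedness statement, the plan is to verify directly that
\[
M_2 \setminus Z_2 = \rho(M_1 \setminus Z_1).
\]
The inclusion $\supseteq$ is immediate from the fibre-union hypothesis: a point of a fibre lying outside $Z_1$ forces the whole fibre, and hence its image in $M_2$, to lie outside $Z_2$. The inclusion $\subseteq$ uses surjectivity of $\rho$, since any $u \notin Z_2$ has nonempty preimage, necessarily disjoint from $Z_1$. Since $\rho$ is a submersion it is an open map, so when $Z_1$ is closed, the right-hand side is open, proving that $Z_2$ is closed in $M_2$.

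The main (and very mild) obstacle is the bookkeeping in the local identity above, in particular confirming that the global definition $Z_2 = \rho(Z_1)$ agrees with the local image $\rho(Z_1 \cap U_1)$; this is precisely where the fibre-union hypothesis enters non-trivially, rather than as a formality. Everything else is a routine coordinate computation combined with Lemma~\ref{subsub1}.
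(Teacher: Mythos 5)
Your proof is correct and takes essentially the same route as the paper: reduce to the local normal form of the submersion (what the paper calls "local triviality") and then invoke Lemma~\ref{subsub1} via the identity $\phi(Z_1 \cap U_1) = (Z_2 \cap U_2) \times V$. One small improvement on your side: the paper disposes of the closedness assertion in the same breath as the submanifold assertion without further comment, whereas your identity $M_2 \setminus Z_2 = \rho(M_1 \setminus Z_1)$, combined with the openness of submersions, is a cleaner and genuinely global justification of that last claim.
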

\begin{proof}
Write
\[
    n_1:=\dim M_1\geq n_2:=\dim M_2.
\]
Take two open embeddings $i_1: \R^{n_1}\hookrightarrow M_1$ and
$i_2: \R^{n_2}\hookrightarrow M_2$ such that the diagram
\[
  \begin{CD}
           \R^{n_1} @> i_1>> M_1\\
            @V \rho' VV           @VV \rho V \\
           \R^{n_2}@>i_2>> M_2\\
  \end{CD}
\]
commutes, where $\rho'$ is the projection to the first $n_2$
coordinates.

Write
\[
   Z_1':=i_1^{-1}(Z_1)\subset \R^{n_1}\quad \textrm{and}\quad
   Z_2':=i_2^{-1}(Z_2)\subset \R^{n_2}.
\]
Then $Z_1'$ is a submanifold of $\R^{n_1}$ which is a union of
fibres of $\rho'$. The condition that $Z_1$ is a union of fibres
of $\rho$ implies that
\[
  Z_2'=\rho'(Z_1').
\]

By the local triviality of submersions, it suffices to prove the
lemma for $\rho'$ and $Z_1'$. The latter is now immediate in view
of Lemma \ref{subsub1} and the fact that
\[
 Z_1'=Z_2'\times \R^{n_1-n_2}.
\]

\end{proof}

By setting
\[
  (\rho:M_1\rightarrow M_2)=(\rho_{\fM}: H\times \fM\rightarrow M),
\]
and
\[
  Z_1=H\times \fZ
\]
in Lemma \ref{subsub2}, we have the following

\begin{lemd}\label{localization1}
Let $\fM$ be an $H$ slice of $M$, and let $\fZ$ be a relatively $H$
stable submanifold of $\fM$. Then $Z=H \fZ$ is a submanifold of $M$,
and $\fZ$ is an $H$ slice of $Z$. Furthermore if $\fZ$ is closed in
$\fM$, then $Z$ is closed in $M$.
\end{lemd}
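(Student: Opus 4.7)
The plan is to apply Lemma \ref{subsub2} directly, with the surjective submersion $\rho:M_1\to M_2$ taken to be the action map $\rho_{\fM}: H\times \fM \to M$ (which is a surjective submersion precisely because $\fM$ is an $H$ slice of $M$), and with the submanifold $Z_1\subset M_1$ taken to be $H\times \fZ \subset H\times \fM$. This is indeed a submanifold as a product of submanifolds (noting that $H$ is open in itself and $\fZ$ is a submanifold of $\fM$).

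The first verification is that $H\times \fZ$ is a union of fibres of $\rho_{\fM}$, which is exactly where the hypothesis of relative $H$-stability is used. Concretely, suppose $(h,x)\in H\times \fZ$ and $(h',x')\in H\times \fM$ lie in the same fibre, i.e.\ $hx=h'x'$ in $M$. Then $x'=(h')^{-1}h\,x \in Hx \subseteq H\fZ$, while simultaneously $x'\in \fM$; hence $x'\in \fM\cap H\fZ =\fZ$ by relative $H$-stability, so $(h',x')\in H\times \fZ$. Thus the hypothesis of Lemma \ref{subsub2} is satisfied.

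Applying Lemma \ref{subsub2} gives that $Z=\rho_{\fM}(H\times\fZ)=H\fZ$ is a submanifold of $M$, and that the restriction of $\rho_{\fM}$ to $H\times \fZ \to Z$ is a surjective submersion. But this restriction is nothing other than the action map $\rho_{\fZ}:H\times\fZ\to Z$, since $\fZ\subseteq \fM$ and the action of $H$ on $M$ restricts consistently. Hence $\fZ$ is an $H$ slice of $Z$ by definition.

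Finally, for the closedness statement, if $\fZ$ is closed in $\fM$ then $H\times\fZ$ is closed in $H\times \fM$, so the ``furthermore'' clause of Lemma \ref{subsub2} yields that $Z=\rho_{\fM}(H\times \fZ)$ is closed in $M$. I expect no real obstacle here: the whole point of having proved Lemmas \ref{subsub1} and \ref{subsub2} beforehand is precisely to make this deduction a two-line bookkeeping argument. The only subtlety is checking the union-of-fibres condition, which is an immediate unwinding of the relative $H$-stability assumption $\fM\cap H\fZ=\fZ$.
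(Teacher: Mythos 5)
Your proof is correct and takes exactly the same route as the paper: the paper's proof of Lemma \ref{localization1} is precisely the one-line observation that setting $(\rho:M_1\to M_2)=(\rho_\fM:H\times\fM\to M)$ and $Z_1=H\times\fZ$ in Lemma \ref{subsub2} gives the result, with the union-of-fibres hypothesis being the remark preceding the lemma that relative $H$-stability means $H\times\fZ$ is a union of fibres of $\rho_\fM$. You have simply spelled out the fibre verification and the identification of the restricted submersion with $\rho_\fZ$, both of which the paper leaves implicit.
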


Lemma \ref{localization1} will be used extensively in Sections \ref{sm4} and \ref{gl6}.

\vsp

Now assume that $\fM$ is a local $H$ slice of $M$, and $H_{\fM}$
is a closed subgroup of $H$ which leaves $\fM$ stable. Let $H$ act
on $H\times \fM$ by left multiplication on the first factor, and
let $H_{\fM}$ act on $H\times \fM$ by
\[
   g(h,x)=(ghg^{-1}, gx), \quad g\in H_{\fM}, \, h\in H, x\in
   \fM.
\]
Then the submersion $\rho_{\fM}$ is $H$ intertwining as well as
$H_{\fM}$ intertwining. Therefore the pulling back yields a linear
map
\[
  \rho_{\fM}^*: \con^{-\infty}_\chi(M)\rightarrow \con^{-\infty}_\chi(H\times
 \fM)\cap \con^{-\infty}_{\chi_{\fM}}(H\times\fM),
\]
where $\chi_\fM=\chi|_{H_\fM}$. By the Schwartz Kernel Theorem and
the fact that every invariant distribution on a Lie group is a
scalar multiple of the Haar measure (\cite[8.A]{W1}), we have
\[
  \con^{-\infty}_\chi(H\times
 \fM)=\chi\otimes \con^{-\infty}(\fM).
\]
Consequently,
\[
  \con^{-\infty}_\chi(H\times
 \fM)\cap \con^{-\infty}_{\chi_{\fM}}(H\times\fM)=\chi\otimes
 \con^{-\infty}_{\chi_{\fM}}(\fM).
\]

We shall record this as

\begin{lemd}\label{restriction}
There is a well-defined map which is called the restriction to
$\fM$:
\[
  \con^{-\infty}_\chi(M)\rightarrow
  \con^{-\infty}_{\chi_{\fM}}(\fM),\quad f\mapsto f|_\fM
\]
by requiring that
\[
  \rho_{\fM}^*(f)=\chi\otimes f|_\fM.
\]
The map is injective when $\fM$ is an $H$ slice.
\end{lemd}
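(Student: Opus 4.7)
The proof is short and really amounts to assembling the ingredients that are already laid out in the paragraphs preceding the statement. I would organize it as two steps: well-definedness of the map $f\mapsto f|_\fM$, and then injectivity under the slice hypothesis.

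For well-definedness, the key point is the double equivariance of the pullback $\rho_\fM^*(f)$ when $f\in \con^{-\infty}_\chi(M)$. Since $\rho_\fM$ intertwines the left $H$-action on $H\times \fM$ (acting only on the first factor) with the $H$-action on $M$, pullback of a $\chi$-equivariant generalized function is $\chi$-equivariant; that is, $\rho_\fM^*(f)\in \con^{-\infty}_\chi(H\times \fM)$. Since $\rho_\fM$ also intertwines the $H_\fM$-action $(g,(h,x))\mapsto (ghg^{-1},gx)$ with the restricted $H_\fM$-action on $M$, and $f$ is automatically $\chi_\fM$-equivariant (as $\chi_\fM=\chi|_{H_\fM}$), we also get $\rho_\fM^*(f)\in \con^{-\infty}_{\chi_\fM}(H\times\fM)$. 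Then I apply the identification recalled just above the lemma,
\[
\con^{-\infty}_\chi(H\times\fM)\cap \con^{-\infty}_{\chi_\fM}(H\times\fM)=\chi\otimes \con^{-\infty}_{\chi_\fM}(\fM),
\]
to conclude that $\rho_\fM^*(f)$ has the form $\chi\otimes g$ for a unique $g\in \con^{-\infty}_{\chi_\fM}(\fM)$. Define $f|_\fM:=g$. This yields the desired restriction map, and its $\chi_\fM$-equivariance is built into the definition.

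For injectivity, I invoke the fact recorded after (\ref{pullbackmap}) that the pullback by a surjective submersion is injective on generalized functions. When $\fM$ is an $H$-slice, by definition $\rho_\fM:H\times \fM\to M$ is a surjective submersion, so $\rho_\fM^*$ is injective. Hence if $f|_\fM=0$, then $\rho_\fM^*(f)=\chi\otimes f|_\fM=0$, forcing $f=0$.

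I do not expect any obstacle. The only subtlety worth checking in writing the argument out is that $\rho_\fM$ really does intertwine the two $H_\fM$-actions as claimed, which is the routine computation $\rho_\fM(ghg^{-1},gx)=ghg^{-1}\cdot gx=g\cdot(hx)=g\cdot\rho_\fM(h,x)$; and that the Schwartz Kernel Theorem identification is applicable uniformly in $\chi$, which is exactly what the preceding discussion (\cite[8.A]{W1}) provides.
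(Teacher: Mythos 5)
Your proof is correct and is essentially the same as the paper's: the paper does not write a separate proof but states the lemma as a record of the immediately preceding discussion (double $H$- and $H_\fM$-equivariance of $\rho_\fM^*$, the Schwartz-kernel identification $\con^{-\infty}_\chi(H\times\fM)=\chi\otimes\con^{-\infty}(\fM)$, and injectivity of pullback along a surjective submersion), which is exactly what you assembled.
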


\subsection{Nash manifolds and tempered generalized functions}
\label{snash}

We begin with a review of basic concepts and properties of Nash
manifolds, in which the notion of tempered generalized functions is
defined. Our main reference on Nash manifolds is \cite{Sh}, and
temperedness is discussed in \cite{du,AG-Nash}.

\vsp \noindent {\bf Remark}: We will use Fourier transforms
implicitly in Section \ref{gl222}, and explicitly in Section
\ref{sgl3}. Fourier transforms are only defined for tempered
generalized functions. This is the main reason that we work with
tempered generalized functions instead of arbitrary generalized
functions.

\vsp

Recall that the collection $\mathcal S\mathcal A_n$ of semialgebraic
subsets of $\R^n$ is the smallest set with the following properties:
\begin{enumerate}
\item[(a)]
 every element of $\mathcal S\mathcal A_n$ is a subset of $\R^n$;
\item[(b)]
 for every real polynomial function $p$ on $\R^n$, we have
\[
  \{x\in \R^n\mid p(x)>0\}\in \mathcal S\mathcal A_n;
\]
 \item[(c)]
  $\mathcal S\mathcal A_n$ is closed under the operation of taking intersection, and taking
  complement in $\R^n$.
\end{enumerate}

A Nash manifold of dimension $n$ is a manifold $M$, together with a
collection $\mathcal{N}$, whose members are called Nash charts, such
that the followings hold:
\begin{enumerate}
\item[(a)]
 every Nash chart has the form $(\phi,U,U')$, where $U$
 is an open semialgebraic subset of $\R^n$, $U'$ is an open subset
 of $M$, and $\phi:U\rightarrow U'$ is a diffeomorphism;
\item[(b)]
 every two Nash charts $(\phi_1,U_1,U'_1)$ and $(\phi_2,U_2,U'_2)$ are Nash compatible, i.e., the graph of the
 diffeomorphism
 \[
  \phi_2^{-1}\circ \phi_1: \phi_1^{-1}(U_1'\cap U_2')\rightarrow \phi_2^{-1}(U_1'\cap U_2')
 \]
 is semialgebraic;
\item[(c)]
  for every triple $(\phi,U,U')$ as in (a), if it is Nash compatible
  with all Nash charts, then itself is a Nash chart;
\item[(d)]
  there are finitely many Nash charts $(\phi_i,U_i,U'_i)$, $i=1,2,\cdots r$, such that
\[
   M=U'_1\cup U'_2\cup \cdots \cup U_r'.
\]
\end{enumerate}
 A subset $Z$ of $M$ is
called semialgebraic if
\[
  \phi^{-1}(Z\cap U')\textrm{ is semialgebraic in $\R^n$ }
\]
for all Nash chart $(\phi,U,U')$ of $M$. A Nash manifold is either
the empty set or a nonempty Nash manifold of dimension $n\geq 0$.
A submanifold of a Nash manifold which is semialgebraic is called
a Nash submanifold, which is automatically a Nash manifold. The
product of two Nash manifolds is again a Nash manifold. A smooth
map $\phi: M_1\rightarrow M_2$ of Nash manifolds is called a Nash
map if its graph is semialgebraic in $M_1\times M_2$. (A Nash map
always sends a semialgebraic set to a semialgebraic set.) A Nash
function on a Nash manifold $M$ is a Nash map from $M$ to $\C$,
and a differential operator $D$ on $M$ is called Nash if $D(f)$ is
Nash for every Nash function $f$ on every Nash open submanifold of
$M$.

A Nash group is a group as well as a Nash manifold so that the group
operations are Nash maps. A Nash action of a Nash group on a Nash
Manifold is defined similarly.

\vsp We proceed to our discussion on the notion of tempered
generalized functions on a Nash manifold. A smooth function $f$ on a
semialgebraic open subset $U$ of $\R^n$ is called a Schwartz
function if $D(f)$ is bounded for every Nash differential operator
$D$ on $U$. Denote by ${\mathcal S} (U)$ the Fr\'{e}chet space of
Schwartz functions on $U$. Now let $M$ be a Nash manifold of
dimension $n$. Pick a covering of $M$ by Nash charts
$(\phi_i,U_i,U'_i)$, $i=1,2,\cdots ,r$. By extending to zero outside
$U'_i$, $\phi_i$ induces a continuous linear map
\[
  (\phi_i)_*: {\mathcal S} (U_i)\rightarrow \con^{\infty}(M).
\]
The Fr\'{e}chet space of Schwartz functions on $M$, denoted by
${\mathcal S}(M)$, is then defined to be the image of the map
\[
 \oplus(\phi_i)_*:\oplus_{i=1}^r {\mathcal S}(U_i)\rightarrow \con^{\infty}(M),
\]
equipped with the quotient topology of $\oplus_{i=1}^r {\mathcal
S}(U_i)$. This definition is independent of the covering we
choose. One may similarly define the Fr\'{e}chet space of Schwartz
densities. Denote by $\con^{-\xi}(M)$ its strong dual, whose
members are called tempered generalized functions. All tempered
generalized functions are generalized functions.

\vsp Now let $H$ be a Nash group, with a Nash action on a Nash
manifold $M$. For any character $\chi$ on $H$, we set
\begin{equation}
  \con^{-\xi}_{\chi}(M)=\con^{-\infty}_{\chi}(M)\cap \con^{-\xi}(M).
\end{equation}

Let $N$ be a Nash manifold, and let $\phi:M\rightarrow N$ be an
$H$ invariant Nash map. We record the following obvious fact as a
lemma.

\begin{lemd}\label{nash}
If $\con^{-\xi}_{\chi}(M)=0$, then
$\con^{-\xi}_{\chi}(\phi^{-1}(N'))=0$ for  all Nash open
submanifold $N'$ of $N$.
\end{lemd}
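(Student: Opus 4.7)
The plan is to localize: given $f \in \con^{-\xi}_\chi(\phi^{-1}(N'))$, I would show that $f$ vanishes in a neighborhood of each point $x_0 \in \phi^{-1}(N')$, which forces $f=0$. Let $y_0 = \phi(x_0) \in N'$. Since $N'$ is open, one can choose a smooth compactly supported function $g$ on $N$ with $\mathrm{supp}(g) \subset N'$ and $g(y_0) \neq 0$; such a $g$ is automatically Schwartz on $N$, and the pullback $g \circ \phi$ is a tempered smooth function on $M$ (as $\phi$ is Nash), is $H$-invariant (as $\phi$ is $H$-invariant), and has support $\phi^{-1}(\mathrm{supp}(g))$, a closed subset of $M$ contained in $\phi^{-1}(N')$.

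The key step is to observe that $(g \circ \phi) \cdot f$, initially defined on $\phi^{-1}(N')$, admits an extension by zero to a tempered generalized function $F$ on $M$. Indeed, for any Schwartz density $\omega$ on $M$, the product $(g \circ \phi)\,\omega$ is a Schwartz density on $M$ (since $g \circ \phi$ is tempered) whose support lies in $\phi^{-1}(\mathrm{supp}(g)) \subset \phi^{-1}(N')$; hence its restriction to $\phi^{-1}(N')$ is a Schwartz density there, and one sets $\langle F, \omega \rangle := \langle f, ((g \circ \phi)\,\omega)|_{\phi^{-1}(N')} \rangle$. The $H$-invariance of $g \circ \phi$ ensures that $F$ is $\chi$-equivariant, so $F \in \con^{-\xi}_\chi(M)$; by hypothesis $F = 0$, and restricting back gives $(g \circ \phi) \cdot f = 0$ on $\phi^{-1}(N')$. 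Since $g \circ \phi$ is nonvanishing on the open neighborhood $\phi^{-1}(\{y : g(y) \neq 0\})$ of $x_0$, one may divide by it locally to conclude that $f = 0$ near $x_0$.

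The only mildly technical point is the verification that extension by zero preserves temperedness, which reduces to the standard multiplier property: multiplication by the tempered function $g \circ \phi$ on $M$ sends Schwartz densities on $M$ to Schwartz densities supported in $\phi^{-1}(N')$, which then canonically restrict to Schwartz densities on $\phi^{-1}(N')$. All other ingredients---existence of smooth bump functions on $N$, $H$-invariance of pullbacks under $H$-invariant maps, and local division by a nowhere-vanishing smooth function---are standard, consistent with the paper's own label of the statement as an obvious fact.
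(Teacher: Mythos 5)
Your argument is correct, and since the paper records this lemma as ``an obvious fact'' without supplying any proof, there is no alternate argument in the paper to compare against; your write-up can be read as a complete verification of the fact the authors leave implicit. The strategy---pull back a compactly supported smooth cutoff $g$ from $N'$ along the $H$-invariant Nash map $\phi$, multiply $f$ by the resulting $H$-invariant tempered function $g\circ\phi$, extend by zero to land in $\con^{-\xi}_\chi(M)=0$, and then divide by $g\circ\phi$ on the open set where it is nonvanishing---is exactly the natural localization argument one would expect. The one step you rightly flag as the only nontrivial point, that a Schwartz density on $M$ whose (closed-in-$M$) support is contained in the open semialgebraic subset $\phi^{-1}(N')$ restricts to a Schwartz density on $\phi^{-1}(N')$, is guaranteed by the characterization in \cite{AG-Nash} of the image of extension by zero as those Schwartz sections flat on the complement; and your use of the multiplier property (tempered function times Schwartz density is Schwartz) together with the open mapping theorem for Fr\'{e}chet spaces gives the continuity of $F$ on $\mathcal{S}(M)$. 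The $\chi$-equivariance of $F$ follows, as you say, from the $H$-invariance of $g\circ\phi$ (which uses precisely the $H$-invariance of $\phi$) and the $\chi$-equivariance of $f$. Nothing is missing.
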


Let $\fM$, $H_\fM$ and $\chi_\fM$ be as in Lemma \ref{restriction}.
If furthermore $\fM$ is a Nash submanifold of $M$, and $H_\fM$ is a
Nash subgroup of $H$, then the restriction map sends
$\con^{-\xi}_{\chi}(M)$ into $\con^{-\xi}_{\chi_\fM}(\fM)$.

\section{Metrical properness and unipotent $\chi$-incompatibility}
\label{UXM}

\subsection{Metrical properness}

This notion requires that the manifold $M$ is pseudo Riemannian,
i.e., the tangent spaces are equipped with a smoothly varying
family $\{\la\,,\,\ra_x:x\in M\}$ of nondegenerate symmetric
bilinear forms.

\begin{dfn}
\label{dmp} \begin{itemize} \item[(a)] A submanifold $Z$ of a
pseudo Riemannian manifold $M$ is said to be metrically proper if
for all $z\in Z$, the tangent space $\oT_z(Z)$ is contained in a
proper nondegenerate subspace of $\oT_z(M)$. \item[(b)] A
differential operator $D\in \Diff(M)_2$ is said to be of Laplacian
type if for all $x\in M$, the principal symbol
\[
  \sigma_2(D)(x)=u_1 v_1+u_2 v_2+\cdots+u_m v_m,
\]
where $u_1,u_2,\cdots,u_m$ is a basis of the tangent space
$\oT_x(M)$, and $v_1,v_2,\cdots,v_m$ is the dual basis in $\oT_x(M)$
with respect to $\la\,,\,\ra_x$.
\end{itemize}
\end{dfn}

Note that a Laplacian type differential operator is transversal to
any metrically proper submanifold, from its very definition.
Therefore the following is a special case of Lemma
\ref{tvanishing2}.

\begin{lemd}\label{tvanishing3}
Let $Z$ be a metrically proper submanifold of $M$, and let $D$ be a
Laplacian type differential operator on $M$. Then
\[
   \con^{-\infty}(M;Z;D)=0.
\]
\end{lemd}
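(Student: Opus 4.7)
The strategy is to verify that every Laplacian type operator $D$ is transversal to every metrically proper submanifold $Z$ in the sense of Definition \ref{dtt}(b); the lemma will then be an immediate consequence of Lemma \ref{tvanishing2} applied with $D_1=D$ (of order $k=2$) and $D_2=0$.

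To establish transversality, fix $z\in Z$ and examine $\sigma_{2,Z}(D)(z)\in S^2(\operatorname{N}_z(M)\otimes_\R \C)$. Using metrical properness, I pick a proper nondegenerate subspace $V$ of $\oT_z(M)$ containing $\oT_z(Z)$, so that $\oT_z(M)=V\oplus V^\perp$ is an orthogonal decomposition with $V^\perp$ nondegenerate and nonzero. Choose a basis $u_1,\ldots,u_m$ of $\oT_z(M)$ adapted to this splitting, with $u_1,\ldots,u_k$ spanning $V$ and $u_{k+1},\ldots,u_m$ spanning $V^\perp$. The basis $v_1,\ldots,v_m$ which is $\la\,,\,\ra_z$-dual to $\{u_i\}$ is automatically adapted in the same way, because orthogonality of $V$ and $V^\perp$ forces $v_i\in V$ for $i\le k$ and $v_i\in V^\perp$ for $i>k$. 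The Laplacian type assumption then gives
\[
  \sigma_2(D)(z)=\sum_{i=1}^k u_iv_i+\sum_{i=k+1}^m u_iv_i \in S^2(\oT_z(M)\otimes_\R\C).
\]

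Since $\oT_z(Z)\subset V$, the composite $V^\perp\hookrightarrow \oT_z(M)\twoheadrightarrow \operatorname{N}_z(M)$ is injective, and we obtain a canonical direct sum decomposition
\[
  S^2(\operatorname{N}_z(M))=S^2(V/\oT_z(Z))\,\oplus\,\bigl((V/\oT_z(Z))\otimes V^\perp\bigr)\,\oplus\,S^2(V^\perp).
\]
Under this decomposition, the image of $\sigma_2(D)(z)$ in $S^2(\operatorname{N}_z(M)\otimes_\R\C)$ has $S^2(V^\perp)$-component equal to $\sum_{i=k+1}^m u_iv_i$, which is the inverse metric tensor of the nondegenerate form $\la\,,\,\ra_z|_{V^\perp}$ (equivalently, the image of the identity under $V^\perp\otimes V^\perp\cong \End(V^\perp)$) and hence nonzero. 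Consequently $\sigma_{2,Z}(D)(z)\neq 0$, so $D$ is transversal to $Z$, and Lemma \ref{tvanishing2} closes the argument.

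The only conceptual point, and the (mild) main obstacle, is verifying that the $S^2(V^\perp)$-component is genuinely nonzero; the first sum $\sum_{i=1}^k u_iv_i$ lies in the complementary summand $S^2(V/\oT_z(Z))$ and therefore cannot cancel it. Nondegeneracy of $\la\,,\,\ra_z|_{V^\perp}$ — guaranteed by the nondegeneracy clause in the definition of metrical properness — together with $V^\perp\neq 0$ — guaranteed by the properness clause — are precisely what make the inverse metric tensor a nonzero element of $S^2(V^\perp)$. No analytic input beyond Lemma \ref{tvanishing2} is needed.
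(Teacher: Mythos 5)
Your proof is correct and follows the same route as the paper: the paper simply asserts that a Laplacian type operator is transversal to any metrically proper submanifold ``from its very definition'' and then invokes Lemma \ref{tvanishing2}. Your argument spells out that one-line assertion by choosing a basis adapted to the orthogonal splitting $\oT_z(M)=V\oplus V^\perp$, observing the dual basis is similarly adapted, and reading off the $\oS^2(V^\perp)$-component of $\sigma_{2,Z}(D)(z)$ as the (nonzero) inverse metric tensor of $V^\perp$ — all of which is sound.
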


\subsection{Unipotent $\chi$-incompatibility}
As in Section \ref{inv}, let $H$ be a Lie group with a character
$\chi$ on it, acting smoothly on a manifold $M$. If a locally
closed subset $Z$ of $M$ is $H$ stable, denote by
$\con^{-\infty}_\chi(M;Z)$ the space of all $f$ in
$\con^{-\infty}(M;Z)$ which are $\chi $-equivariant. We shall use
similar notations (such as $\con^{-\infty}_\chi(M;D)$ and
$\con^{-\infty}_\chi(M;Z;D)$) without further explanation.

\begin{dfnd}
\label{duc} An $H$ stable submanifold $Z$ of $M$ is said to be
unipotently $\chi$-incompatible if for every $z_0\in Z$, there is a
local $H$ slice $\fZ$ of $Z$, containing $z_0$, and a smooth map
$\phi: \fZ\rightarrow H$ such that the followings hold for all $z\in
\fZ$:
\begin{itemize}
 \item[(a)]
   $\phi(z)z=z$, and
 \item[(b)]
   the linear map
   \[
     \oT_z(M)/\oT_z(Z)\rightarrow \oT_z(M)/\oT_z(Z)
   \]
      induced by the action of $\phi(z)$ on $M$ is unipotent;
      \item[(c)]
$\chi(\phi(z))\neq 1$.

\end{itemize}
\end{dfnd}

The following lemma will be important for our later considerations.

\begin{lemd}\label{localization3}
Let $Z$ be an $H$ stable submanifold of $M$ which is unipotently
$\chi$-incompatible. Then $\con_\chi^{-\infty}(M;Z)=0$.
\end{lemd}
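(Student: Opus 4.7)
The plan is to prove the vanishing locally on $M$. Fix $z_0 \in Z$; it suffices to exhibit an open neighborhood on which every $f \in \con^{-\infty}_\chi(M;Z)$ vanishes. I would choose a local $H$-slice $\fZ \subset Z$ through $z_0$ and a smooth map $\phi\colon \fZ \to H$ as in Definition \ref{duc}. Next I would select a submanifold $\fM \subset M$ containing $\fZ$, shrunk so that $\fM \cap Z = \fZ$ and the action map $H \times \fM \to M$ is a submersion onto an open neighborhood of $z_0$; such $\fM$ exists because $\oT_{z_0}(Z) = \oT_{z_0}(H\cdot z_0) + \oT_{z_0}(\fZ)$, and we simply augment $\fZ$ by a complement in $\oT_{z_0}(M)$ to $\oT_{z_0}(Z)$. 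Since $f$ is $\chi$-equivariant, Lemma \ref{restriction} gives an injective restriction map, and it suffices to prove that $f|_\fM = 0$.

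I would then filter $\con^{-\infty}(\fM;\fZ)$ by transverse order along $\fZ$, writing $F_{-1} \subset F_0 \subset F_1 \subset \cdots$, whose associated graded $F_k/F_{k-1}$ identifies canonically with the generalized sections over $\fZ$ of $S^k(\operatorname{N}_\fZ(\fM))^*$ tensored with the transverse density line. The inclusion $\fM \hookrightarrow M$ induces an isomorphism $\operatorname{N}_\fZ(\fM) \cong \operatorname{N}_Z(M)|_\fZ$ of normal bundles, and by hypothesis (b) of Definition \ref{duc} the differential $d\phi(z)_z$ acts unipotently on each fiber. Assume for contradiction that $f|_\fM \neq 0$, let $k$ be the minimal integer with $f|_\fM \in F_k$, and let $\bar f \in F_k/F_{k-1}$ be the nonzero leading symbol, viewed as a generalized section over $\fZ$. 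Unwinding the global $\chi$-equivariance of $f$ using the slice identification near each point $z \in \fZ$, and using that $\phi(z)$ fixes $z$ and preserves $Z$ setwise (since $Z$ is $H$-stable), the pointwise value $\bar f(z)$ becomes an eigenvector, with eigenvalue $\chi(\phi(z))$, of the linear endomorphism induced by $d\phi(z)_z$ on the fiber of $S^k(\operatorname{N}_Z(M))^*$ at $z$; the density twist contributes the trivial character because a unipotent map has determinant one. A symmetric power of a unipotent linear map is unipotent, so the only eigenvalue available is $1$, while hypothesis (c) gives $\chi(\phi(z)) \neq 1$. Hence $\bar f(z) = 0$ for every $z \in \fZ$, so $\bar f = 0$, contradicting minimality of $k$. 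Therefore $f|_\fM = 0$ and $f$ vanishes near $z_0$.

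The main obstacle will be the rigorous verification that the global $\chi$-equivariance of $f$ under $H$ descends, after passage to the leading symbol in the transverse filtration, to the pointwise eigenvalue equation on $\bar f(z)$ under the single element $\phi(z)$. The subtlety is that $\phi(z)$ depends on $z$ and that $\phi(z)$ does not preserve $\fM$ or $\fZ$ as sets; only the single point $z$ is fixed. To handle this I would work near $z$ in coordinates adapted to the local identification $H_0 \times \fM \cong U \subset M$ provided by the slice; by the Schwartz Kernel Theorem argument used in Lemma \ref{restriction}, $f$ takes the form $\chi \otimes f|_\fM$ on $U$. Translating left multiplication by $\phi(z)$ through this product identification yields a local diffeomorphism of $H_0 \times \fM$ fixing $(e, z)$, whose linearization at $(e, z)$ restricts on the $\fM$-factor modulo tangents to $\fZ$ exactly to $d\phi(z)_z$ on $\operatorname{N}_Z(M)_z$; tangential (horizontal) contributions fall into $F_{k-1}$ and are absorbed into lower order, leaving only the fiber action at the top order. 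Carrying out this local computation cleanly at each $z \in \fZ$ is the crux, after which the unipotent-versus-$\chi$ eigenvalue contradiction closes the argument.
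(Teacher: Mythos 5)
Your proof follows essentially the same route as the paper: both invoke the L.~Schwartz transverse-order filtration of generalized functions supported in a submanifold, identify its graded pieces with generalized sections of (twists of) symmetric powers of the normal bundle, and then derive a contradiction from unipotence of the induced action against $\chi(\phi(z))\neq 1$ (with the density factor contributing trivially because unipotent maps have determinant one). The paper, however, packages the argument more cleanly: it applies the filtration \emph{globally} on $M$, obtaining $H$-equivariant vector bundles over $Z$, and then proves the self-contained Sublemma~\ref{localization4} about such bundles using the $H$-slice $\fZ$ of $Z$ and the pullback via $\rho_\fZ : H\times\fZ\to Z$. In that picture, the relation $\phi(z)z=z$ directly yields $\tilde f(h\phi(z),z)=\tilde f(h,z)$, whence $(\chi(\phi(z))\phi(z)-1_{E_z})f|_\fZ(z)=0$ as an equation of generalized sections, and one divides by the invertible smooth bundle endomorphism. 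By contrast, you restrict first to a local slice $\fM$ of $M$ and filter $\con^{-\infty}(\fM;\fZ)$; this is where the difficulty you flag at the end arises, since $\phi(z)$ need not stabilize $\fM$, and recovering the eigenvalue equation on the leading symbol then requires the change-of-coordinates computation through the product identification $H_0\times\fM$ that you only sketch. That computation can be made to work, but the paper's choice to filter globally and formulate the sublemma over $Z$ (rather than over a slice of $M$) sidesteps it entirely; this is the one genuine simplification you are missing. Your identification of exactly this point as ``the crux'' shows you understand where the subtlety lies.
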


By using a well-known result of L. Schwartz on the filtration of the
sheaf of generalized functions with supports in a submanifold, Lemma
\ref{localization3} is implied by the following

\begin{sublemd}\label{localization4}
Let $\fZ$ be an $H$ slice of an $H$ manifold $Z$. Let $E$ be an $H$
equivariant smooth complex vector bundle over $Z$, of finite rank.
Assume that there is a smooth map $\phi: \fZ\rightarrow H$ such that
for all $z\in \fZ$,
\begin{itemize}
 \item[(a)]
    $\phi(z)z=z$, and
  \item[(b)]
   the linear map
   \[
     \phi(z): E_z\rightarrow E_z
   \]
   is unipotent, where $E_z$ is the fibre of $E$ at $z$;
   \item[(c)]
   $\chi(\phi(z))\neq 1$.

\end{itemize}
Then
\[
  \Gamma^{-\infty}_\chi(E)=0.
\]
\end{sublemd}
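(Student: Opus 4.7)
The plan is to use the slice $\fZ$ together with the map $\phi: \fZ \to H$ to reduce the sublemma to a fiber-wise linear-algebra assertion, exploiting the contrast between the unipotence of $T_{\phi(z)}$ on $E_z$ and the condition $\chi(\phi(z)) \neq 1$. Throughout, write $T_h \colon E_z \to E_{hz}$ for the bundle action of $h \in H$.

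First, for a hypothetical $f \in \Gamma_\chi^{-\infty}(E)$, I would pull back under the surjective submersion $\rho_\fZ \colon H \times \fZ \to Z$ and trivialize $\rho_\fZ^* E$ via the $H$-equivariant structure. Concretely, the map
\[
  \tau \colon p_2^*(E|_\fZ) \xrightarrow{\sim} \rho_\fZ^* E, \qquad (h, z, v) \longmapsto T_h v,
\]
is an isomorphism of bundles, under which the $H$-action becomes left translation on the $H$-factor alone. The same Schwartz-kernel / invariant-distribution argument that underlies Lemma \ref{restriction} then produces
\[
  \tau^{-1}(\rho_\fZ^* f) \;=\; \chi \otimes g
\]
for a unique $g \in \Gamma^{-\infty}(E|_\fZ)$, namely the restriction $f|_\fZ$.

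Next I would test this identity against the smooth map $\Phi \colon \fZ \to H \times \fZ$, $z \mapsto (\phi(z), z)$. By hypothesis (a) we have $\rho_\fZ \circ \Phi = \mathrm{id}_\fZ$, so $\Phi^* \rho_\fZ^* f = g$. Computing $\Phi^*$ of the right-hand side of the displayed identity while carrying the bundle twist $\tau$ through the point $(\phi(z), z)$ yields an eigenvalue-type equation
\[
  T_{\phi(z)}\, g(z) \;=\; \chi(\phi(z))^{-1}\, g(z)
\]
as generalized sections of $E|_\fZ$. The final step is now purely algebraic: the smooth bundle endomorphism $A(z) := T_{\phi(z)} - \chi(\phi(z))^{-1}\,\mathrm{id}$ of the finite-rank bundle $E|_\fZ$ is everywhere invertible, because hypothesis (b) forces every eigenvalue of $T_{\phi(z)}$ to equal $1$ while hypothesis (c) gives $\chi(\phi(z))^{-1} \neq 1$; thus every eigenvalue of $A(z)$ is the nonzero scalar $1 - \chi(\phi(z))^{-1}$. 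Multiplying the equation $A \cdot g = 0$ by the continuous, smoothly varying inverse $A^{-1}$ forces $g = 0$, and then $f = 0$ by injectivity of $\rho_\fZ^*$ on generalized sections of an equivariant bundle under a surjective submersion.

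The main obstacle I anticipate is not the final linear algebra, which is clean thanks to the finite rank of $E$, but the careful bookkeeping of bundle identifications: specifically, promoting the Schwartz-kernel step from the scalar setting of Lemma \ref{restriction} to the equivariant-bundle setting via $\tau$, and verifying that $\Phi^*$ applied to $\tau(\chi \otimes g)$ produces precisely the twist by $T_{\phi(z)}$ in the eigenvalue equation above (in particular, getting the correct exponent of $\chi(\phi(z))$ on the right-hand side). Both checks are technical but routine extensions of the scalar argument already in the paper.
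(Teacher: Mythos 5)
Your proposal is correct and follows essentially the paper's approach: pull back along the slice submersion, express $\rho_\fZ^* f$ as a twisted tensor product with the restriction to $\fZ$, use condition (a) to obtain the fiberwise eigenvalue equation $(\chi(\phi(z))T_{\phi(z)}-1_{E_z})\,g(z)=0$, and invoke (b), (c) to invert $\chi(\phi(z))T_{\phi(z)}-1_{E_z}$ by the finite-rank unipotence argument. The only technical difference is that the paper obtains the eigenvalue equation by substituting $h\mapsto h\phi(z)$ in the identity over $H\times\fZ$ and appealing to injectivity of the twist map $\Gamma^{-\infty}(E|_\fZ)\to\Gamma^{-\infty}(\tilde E)$, rather than restricting along the embedding $\Phi\colon z\mapsto(\phi(z),z)$; the latter needs a small extra justification (since $\Phi$ is not a submersion, $\Phi^*$ is not defined on arbitrary generalized sections), but is legitimate here because $\rho_\fZ^* f$ has the product structure you identified.
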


Here and as usual, ``$\Gamma^{-\infty}$" stands for the space of
generalized sections. (We omit its definition since it is a
straightforward generalization of the notion of generalized
functions, in Section \ref{sub2.1}.) The space
$\Gamma^{-\infty}_\chi(E)$ consists of all $f\in
\Gamma^{-\infty}(E)$ such that
\begin{equation}\label{fhx}
   f(hx)=\chi(h)h(f(x)),\quad\textrm{ for all }    h\in H.
\end{equation}
The meaning of (\ref{fhx}) will be made clear in the following
proof.
\begin{proof}
As in the case of generalized functions, define the pulling back
\[
    \rho_{\fZ}^*: \Gamma^{-\infty}(E)\rightarrow \Gamma^{-\infty}(\tilde E),
\]
of the action map
\[
    \rho_{\fZ}: H\times \fZ\rightarrow Z,
\]
which continuously extends the usual pulling back of smooth
sections. Here $\tilde E$ is the pulling back of $E$ via $\rho_\fZ$,
which is obviously an $H$ equivariant vector bundle over $H\times
\fZ$. Note that the bundle $\tilde{E}|_{\{e\}\times \fZ}$ is
identified with $E|_\fZ$. The restriction $f|_\fZ\in
\Gamma^{-\infty}(E|_\fZ)$ of an element $f\in
\Gamma^{-\infty}_\chi(E)$ is then specified by
\begin{equation}\label{equalitygs}
  \tilde{f}(h,z)=\chi(h)h f|_\fZ(z), \quad\textrm{where } \tilde
  f=\rho_\fZ^*(f).
\end{equation}
C.f. Lemma \ref{restriction}.
Here we caution the reader due to the fact that we are dealing with
generalized (as opposed to smooth) sections. The formula
(\ref{equalitygs}) is to be understood as an equality in
$\Gamma^{-\infty}(\tilde E)$. The righthand side makes sense since
the map
\[
   \Gamma^{\infty}(E|_\fZ)\rightarrow \Gamma^{\infty}(\tilde
   E),\quad f'(z)\mapsto \chi(h)h f'(z)
\]
of smooth sections extends continuously to a (well-defined) map
\begin{equation}\label{gamres}
   \Gamma^{-\infty}(E|_\fZ)\rightarrow \Gamma^{-\infty}(\tilde
   E),\quad f'(z)\mapsto \chi(h)h f'(z)
\end{equation}
of generalized sections. Similarly, all the equalities below, which
are obvious when $f|_\fZ$ is a smooth section, make sense and hold
true by a continuity argument.

Condition (a) implies
\[
 \tilde{f}(h\phi(z),z)=\tilde{f}(h,z),
\]
and (\ref{equalitygs}) implies
\[
  \tilde{f}(h\phi(z),z)=\chi(h\phi(z))h\phi(z)f|_\fZ(z).
\]
Therefore
\begin{equation}\label{echih}
  \chi(h\phi(z))h\phi(z)f|_\fZ(z)=\chi(h)hf|_\fZ(z).
\end{equation}
Since the map (\ref{gamres}) is injective, (\ref{echih}) implies
that
\[
 (\chi(\phi(z))\phi(z)-1_{E_z})f|_\fZ(z)=0,
\]
where $\phi(z)$ is viewed as a linear automorphism of $E_z$, and
$1_{E_z}$ is the identity map of $E_z$. Conditions (b) and (c) imply
that $\chi(\phi(z))\phi(z)-1_{E_z}$ is invertible on $E_z$ and so
\[
  f|_\fZ(z)=(\chi(\phi(z))\phi(z)-1_{E_z})^{-1}(\chi(\phi(z))\phi(z)-1_{E_z})f|_\fZ(z)=0,
\]
which implies that $f=0$.
\end{proof}

Recall the notion of a Nash group from Section \ref{snash}. It is
said to be unipotent if it is Nash isomorphic to a connected closed
subgroup of some $U_n$, where $U_n$ is the Nash group of unipotent
upper triangular real matrices of size $n$. An element of a Nash
group is said to be (Nash) unipotent if it is contained in a
unipotent Nash closed subgroup. We note that the general linear
group $\GL_n(\K)$ is Nash and an element of $\GL_n(\K)$ is (Nash)
unipotent if and only if it is unipotent in the usual sense, i.e.,
is a unipotent linear transformation.

If $H$, $M$ and the action of $H$ on $M$ are all Nash, then an $H$
stable submanifold $Z$ of $M$ is unipotently $\chi$-incompatible
if the following holds: for every point $z_0\in Z$, there is a
local $H$ slice $\fZ$ of $Z$, containing $z_0$, and a smooth map
$\phi: \fZ\rightarrow H$ such that, for all $z\in \fZ$,
\begin{enumerate}
    \item[(a)]
  $\phi(z)z=z$, and
   \item[(b)]
   $\phi(z)$ is (Nash) unipotent;
   \item[(c)]
   $\chi(\phi(z))\neq 1$.
\end{enumerate}
The reason for this is that the hypothesis of Nash action ensures
that the map
\[
\oT_{z}(M)\rightarrow \oT_{z}(M),
\]
induced by the action of the unipotent element $\phi(z)$, is
unipotent. This implies condition (b) in Definition \ref{duc}.

\subsection{$\oU_{\chi}\oM$ property}
As before, let $H$ be a Lie group acting smoothly on a manifold
$M$, and let $\chi$ be a character on $H$. We further assume that
$M$ is a pseudo Riemannian manifold.

\begin{dfnd}
\label{dum} We say that an $H$ stable locally closed subset $Z$ of
$M$ has $\oU_{\chi}\oM$ property if there is a finite filtration
\[
  Z=Z_0\supset Z_1\supset\cdots\supset Z_k\supset Z_{k+1}=\emptyset
\]
of $Z$ by $H$ stable closed subsets of $Z$ such that each
$Z_i\setminus Z_{i+1}$ is a submanifold of $M$ which is either
unipotently $\chi$-incompatible or metrically proper in $M$.
\end{dfnd}

As a combination of Lemma \ref{tvanishing3} and Lemma
\ref{localization3}, we have
\begin{lemd}
\label{UchiM} Let $D$ be a  differential operator on $M$ of
Laplacian type. Let $Z$ be an $H$ stable closed subset of $M$
having $\oU_{\chi}\oM$ property. Then
\[
  \con^{-\infty}_\chi(M;Z;D)=0.
\]
\end{lemd}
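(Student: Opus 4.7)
The plan is to argue by downward induction along the filtration
\[
  Z=Z_0\supset Z_1\supset\cdots\supset Z_k\supset Z_{k+1}=\emptyset,
\]
peeling off one stratum at a time. Let $f\in\con^{-\infty}_\chi(M;Z;D)$. I will show inductively that $\operatorname{supp}(f)\subseteq Z_{i+1}$ for each $i=0,1,\ldots,k$; taking $i=k$ then forces $f=0$.

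For the inductive step, suppose $\operatorname{supp}(f)\subseteq Z_i$. Since $Z_{i+1}$ is $H$-stable and closed, $M\setminus Z_{i+1}$ is an $H$-stable open submanifold of $M$, and on it the locally closed piece $Z_i\setminus Z_{i+1}$ is a closed submanifold. Restricting $f$ to $M\setminus Z_{i+1}$ gives an element of $\con^{-\infty}_\chi(M\setminus Z_{i+1};\,Z_i\setminus Z_{i+1};\,D)$, because the restriction is still $\chi$-equivariant (as $H$ acts on the open set), still killed by $D$, and supported in $Z_i\cap(M\setminus Z_{i+1})=Z_i\setminus Z_{i+1}$.

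Now invoke the assumed dichotomy. If $Z_i\setminus Z_{i+1}$ is unipotently $\chi$-incompatible in $M$, the defining condition is local, so it remains unipotently $\chi$-incompatible as a submanifold of the open set $M\setminus Z_{i+1}$; Lemma \ref{localization3} then gives $f|_{M\setminus Z_{i+1}}=0$. If instead $Z_i\setminus Z_{i+1}$ is metrically proper in $M$, this too is a pointwise condition on tangent spaces and therefore survives restriction to $M\setminus Z_{i+1}$; moreover $D$ remains of Laplacian type on the open set, so Lemma \ref{tvanishing3} yields $f|_{M\setminus Z_{i+1}}=0$. In either case $\operatorname{supp}(f)\subseteq Z_{i+1}$, completing the induction.

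There is no real obstacle here; the only things to check are the routine points that (i) both geometric notions involved in the $\oU_{\chi}\oM$ property are local and thus pass to open restrictions, and (ii) the class $\con^{-\infty}_\chi(\,\cdot\,;\,\cdot\,;D)$ is stable under restriction to $H$-stable open subsets. Once these are in place, the proof is a two-line invocation of Lemmas \ref{tvanishing3} and \ref{localization3} at each stratum, with the base case $Z_{k+1}=\emptyset$ being trivial.
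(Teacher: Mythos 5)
Your proposal is correct and is precisely the argument the paper has in mind (the paper simply asserts Lemma \ref{UchiM} to be ``a combination of Lemma \ref{tvanishing3} and Lemma \ref{localization3}'' without spelling out the induction). The only remark worth adding is that the restriction bookkeeping is already baked into the paper's convention $\con^{-\infty}(M;Z)=\{f\in\con^{-\infty}(U)\mid\operatorname{supp}(f)\subseteq Z\}$ for $Z$ locally closed, so the two lemmas apply verbatim to each stratum $Z_i\setminus Z_{i+1}$ and the checks (i)--(ii) you flag are automatic.
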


\section{Small submanifolds of $\GL_6$}
\label{smallsub}

We return to the group $G=\GL_6(\K)$. Recall from the Introduction
the subgroup $S$ and its character $\chi_S$. From now on, we set
\begin{equation}
\label{Handchi}
  H=S\times S,\quad\textrm{and}\quad \chi=\chi _S \otimes \chi
  _S.
\end{equation}
Let $H$ act on $G$ by
\[
  (g_1,g_2)x=g_1 x g_2^\tau.
\]
Our main object of concern is the space $\con_\chi^{-\infty}(G)$.

For $x\in G$, define its rank matrix
\[
   \operatorname{R}(x)=\left[\begin{array}{cc}
  \rank_{4\times 4}(x)&\rank_{4\times 2}(x)\\
  \rank_{2\times 4}(x)&\rank_{2\times 2}(x)\\
  \end{array}\right],
\]
where $\rank_{i\times j}(x)$ is the rank of the lower right
$i\times j$ block of $x$. Then $R(x)$ takes the following $21$
possible values \cite{Nien}:
\[
   \left[\begin{array}{cc}
    4&2\\
    2&2\\
  \end{array}\right],
   \left[\begin{array}{cc}
    4&2\\
    2&1\\
  \end{array}\right],
   \left[\begin{array}{cc}
    4&2\\
    2&0\\
  \end{array}\right],
  \left[\begin{array}{cc}
    3&2\\
    2&2\\
  \end{array}\right],
  \left[\begin{array}{cc}
    3&2\\
    2&1\\
  \end{array}\right],
\]
\[
   \left[\begin{array}{cc}
    3&2\\
    1&1\\
  \end{array}\right],
   \left[\begin{array}{cc}
    3&1\\
    2&1\\
  \end{array}\right],
   \left[\begin{array}{cc}
    3&1\\
    1&1\\
  \end{array}\right],
  \left[\begin{array}{cc}
    3&2\\
    1&0\\
  \end{array}\right],
  \left[\begin{array}{cc}
    3&1\\
    2&0\\
  \end{array}\right],
  \left[\begin{array}{cc}
    3&1\\
    1&0\\
  \end{array}\right],
\]
\[
  \left[\begin{array}{cc}
    2&2\\
    2&2\\
  \end{array}\right],
  \left[\begin{array}{cc}
    2&2\\
    1&1\\
  \end{array}\right],
      \left[\begin{array}{cc}
    2&1\\
    2&1\\
  \end{array}\right],
   \left[\begin{array}{cc}
    2&1\\
    1&1\\
  \end{array}\right],
\]
\[
  \left[\begin{array}{cc}
    2&2\\
    0&0\\
  \end{array}\right],
  \left[\begin{array}{cc}
    2&0\\
    2&0\\
  \end{array}\right],
  \left[\begin{array}{cc}
    2&1\\
    1&0\\
  \end{array}\right],
   \left[\begin{array}{cc}
    2&1\\
    0&0\\
  \end{array}\right],
  \left[\begin{array}{cc}
    2&0\\
    1&0\\
  \end{array}\right],
  \left[\begin{array}{cc}
    2&0\\
    0&0\\
  \end{array}\right].
\]
For $R$ one of the above, denote
\begin{equation}
\label{GR}
  G_{R}=\{x\in G\mid \operatorname{R}(x)=R\}.
\end{equation}
Then
\[ G=\bigsqcup_{R}G_R
\]
is the decomposition of $G$ into $P$-$P^\tau$ double cosets.
Define an open submanifold
\begin{equation*}
\begin{aligned}
  &G'=\{x\in G\mid \rank_{2\times 4}(x)=\rank_{4\times 2}(x)=2,\\
  &\phantom{G'=\{x\in G\mid } \ \rank_{2\times 2}(x)\geq 1,\, \rank_{4\times 4}(x)\geq
  3\}.
\end{aligned}
\end{equation*}
Then we have \begin{equation} \label{dg'} G'=\bigsqcup G_R,
\end{equation}
where $R$ in the union runs through the following four matrices
\begin{equation}\label{fourm}
 \left[\begin{array}{cc}
    4&2\\
    2&2\\
  \end{array}\right],
   \left[\begin{array}{cc}
    4&2\\
    2&1\\
  \end{array}\right],
    \left[\begin{array}{cc}
    3&2\\
    2&2\\
  \end{array}\right],
  \left[\begin{array}{cc}
    3&2\\
    2&1\\
     \end{array}\right].
\end{equation}

Let $\Delta $ be the Casimir operator on $G$, as in the
Introduction. The goal of this section is to prove the following

\begin{prp}\label{mp5}
Let $f\in \con^{-\infty}_{\chi}(G)$. If $f$ is an eigenvector of
$\Delta$, and $f$ vanishes on $G'$, then $f=0$.
\end{prp}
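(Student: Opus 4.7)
The plan is to derive the proposition as an application of Lemma \ref{UchiM}. Let $\lambda$ denote the eigenvalue of $f$ under $\Delta$, so that $D := \Delta - \lambda$ annihilates $f$. Because the principal symbol of the Casimir is the dual of the bi-invariant bilinear form $\la\cdot,\cdot\ra_\R$, the operator $D$ is of Laplacian type with respect to the associated pseudo-Riemannian structure on $G$. Since $f$ vanishes on the open subset $G'$, its support lies in the closed, $H$-stable complement
\[
  G \setminus G' \;=\; \bigsqcup_{R} G_R,
\]
the union of the seventeen $P$-$P^\tau$ double cosets whose rank matrices are not among the four listed in (\ref{fourm}). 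If $G \setminus G'$ can be shown to have the $\oU_\chi\oM$ property, Lemma \ref{UchiM} immediately yields $f=0$.

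To build the required filtration, I would pick a total order $R_1, R_2, \dots, R_{17}$ refining the closure partial order on the seventeen small double cosets, so that the sets
\[
  Z_j \;:=\; \bigsqcup_{i \geq j} G_{R_i}
\]
are closed in $G \setminus G'$ and $H$-stable, with $Z_j \setminus Z_{j+1} = G_{R_j}$. It then remains to verify, stratum by stratum, that each small $G_R$ is either metrically proper in $G$ or unipotently $\chi$-incompatible. This is where the technique attributed to Shalika enters: for each double coset, I would fix a convenient Bruhat representative $w_R$, analyze its stabilizer in $H = S \times S$, and exhibit either a unipotent element in the stabilizer on which $\chi = \chi_S \otimes \chi_S$ is nontrivial (yielding unipotent $\chi$-incompatibility via the Nash criterion following Definition \ref{duc}), or a direction in $\oT_{w_R}(G)$ transverse to $\oT_{w_R}(G_R)$ along which $\la\cdot,\cdot\ra_\R$ remains nondegenerate on a proper subspace containing $\oT_{w_R}(G_R)$ (yielding metrical properness).

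The main obstacle is the case-by-case verification across all seventeen strata. Because the rank of the lower-right blocks of $w_R$ falls below the generic configuration, the stabilizer of $w_R$ in $H$ necessarily contains a substantial unipotent subgroup inside the upper-triangular parts of $S \times S$; the sensitive dependence of $\chi_S$ on the central $\psi_\K$-twisted entries and the $\chi_{\K^\times}$-twisted diagonal makes it systematically easy to arrange $\chi(\phi(z)) \neq 1$ on a local slice. In those few cases where the unipotent test fails, one instead checks metrical properness directly by inspecting the block structure of $\oT_{w_R}(G_R)$ against the trace form. No single case is conceptually difficult, but the cumulative bookkeeping---choosing $w_R$, computing the stabilizer, and producing either the Nash map $\phi$ or a nondegenerate transverse direction for each of the seventeen rank matrices---is where the real labor of this section lies.
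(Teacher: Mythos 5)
Your proposal takes a genuinely different route from the paper's, and there is a gap in it that is not merely a matter of ``cumulative bookkeeping.''

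The paper's proof of Proposition \ref{mp5} does not invoke the $\oU_\chi\oM$ property or Lemma \ref{UchiM} at all. Instead it combines three ingredients: (1) Proposition \ref{smallm}, which shows that one of the two specific invariant vector fields $X_{\oleft}$ or $X_{\oright}$ built from $x_{\oleft}$ and $x_{\oright}$ in (\ref{xleft}) is \emph{transversal} to each of the seventeen small double cosets $G_R$; (2) Lemma \ref{mp4}, which shows that when $\Delta$ acts on a $\chi$-equivariant generalized function, the $\chi$-equivariance turns the $\n^\tau$-derivatives into scalars, so that $\Delta$ reduces to $cX_{\oleft}(\lambda)+D_{\oleft}$ with $D_{\oleft}$ \emph{tangential} to every $P$-$P^\tau$ double coset; and (3) Shalika's vanishing lemma, Lemma \ref{tvanishing2}. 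The crucial conceptual point you miss is (2): the reason the paper can get away with a very weak condition on each stratum (transversality of a \emph{single} vector field) is that, modulo $\chi$-equivariance, the second-order Casimir collapses to a first-order invariant vector field plus a tangential operator. Your sketch does not use this reduction. You instead apply Lemma \ref{UchiM} directly with the full second-order operator $D=\Delta-\lambda$, which requires the much stronger condition that each small $G_R$ be either metrically proper in $G$ or unipotently $\chi$-incompatible.

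That stronger condition is neither verified in your proposal nor clearly true. Transversality of $X_{\oleft}$ to $G_R$ (which is all that Proposition \ref{smallm} yields) does \emph{not} imply that $\oT_w(G_R)$ sits inside a proper \emph{nondegenerate} subspace of $\oT_w(G)$: the normal space to a stratum at $w$ can be partially or totally isotropic for the trace form, in which case the orthogonal complement of $\oT_w(G_R)$ is degenerate and there is no ``transverse nondegenerate direction'' in the sense your last sentence suggests. Nor does a low-rank lower-right block automatically supply a unipotent stabilizer element on which $\chi=\chi_S\otimes\chi_S$ is nontrivial; your claim that the dependence of $\chi_S$ on $\psi_\K$ and $\chi_{\K^\times}$ ``makes it systematically easy'' to arrange this is a hope, not an argument. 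If you wish to pursue your route you would need, for each of the seventeen rank matrices, to produce a slice, compute the stabilizer, and exhibit either a genuine nondegenerate proper subspace containing the full tangent space (as in the style of Lemmas \ref{m43}, \ref{m45}, \ref{m46}), or a $\chi$-nontrivial unipotent stabilizer element. That is a considerably heavier verification than the short transversality checks in Lemmas \ref{smallm0}--\ref{smallm4}, and the paper never asserts that it can be carried out.
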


Set
\begin{equation}
\label{xleft}
  x_{\oleft}=\left[
               \begin{array}{ccc}
                  0&I_2&0\\
                  0&0&I_2\\
                  0&0&0\\
                \end{array}
            \right]\in \gl_6(\K)\quad \textrm{ and } x_{\oright}=x_{\oleft}^\tau.
\end{equation}
Denote by $X_{\oleft}$ the left invariant vector field on $G$
whose tangent vector at $x$ is $xx_{\oleft}$, and by $X_{\oright}$
the right invariant vector field on $G$ whose tangent vector at
$x$ is $x_{\oright}x$.

The key to Proposition \ref{mp5} is the following transversality
result. We shall divide it into a number of lemmas (Lemmas
\ref{smallm0}, \ref{smallm2}, \ref{smallm3}, \ref{smallm4}).

\begin{prpp}\label{smallm} Assume that $R$ is not one of the four matrices in
(\ref{fourm}).
Then either $X_{\oleft}$ or $X_{\oright}$ is transversal to the
double coset $G_R$.
\end{prpp}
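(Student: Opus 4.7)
The tangent space to $G_R$ at $y$ is $T_y(G_R) = \mathfrak{p}\cdot y + y\cdot \mathfrak{p}^\tau$, where $\mathfrak{p} = \Lie(P)$ is the block-upper-triangular subalgebra with $2\times 2$ blocks. Thus $X_{\oleft}$ is transversal at $y$ iff $y\cdot x_{\oleft} \notin T_y(G_R)$, equivalently $x_{\oleft} \notin \Ad(y^{-1})\mathfrak{p} + \mathfrak{p}^\tau$; symmetrically, $X_{\oright}$ is transversal at $y$ iff $x_{\oright} \notin \mathfrak{p} + \Ad(y)\mathfrak{p}^\tau$. Since $G_R$ is $P$-left-invariant and $X_{\oleft}$ is left-$G$-equivariant, the first condition depends only on the $P$-left-coset of $y$; symmetrically the second depends only on the $P^\tau$-right-coset. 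The plan is to verify transversality at a single well-chosen representative of each double coset and then extend.

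I would take $x_R$ to be a generalized permutation matrix arising from the Bruhat-type parameterization $P\backslash G/P^\tau \cong W_P\backslash S_6 / W_P$ with $W_P = S_2 \times S_2 \times S_2$, so that $x_R$ is a $0$-$1$ matrix in $2\times 2$ block form. The space $\mathfrak{p}\cdot x_R + x_R\cdot \mathfrak{p}^\tau$ then has a transparent block description (certain block positions are forced to be zero, read off from the zero pattern of $x_R$), and one computes $x_R\cdot x_{\oleft}$ and $x_{\oright}\cdot x_R$ explicitly. The key intuition is that right-multiplication by $x_{\oleft}$ shifts column-blocks two positions to the right, so it detects column-type rank deficiencies ($\rank_{4\times 2}(x) \leq 1$ or $\rank_{2\times 2}(x) = 0$), while left-multiplication by $x_{\oright}$ shifts row-blocks downward and detects row-type deficiencies ($\rank_{2\times 4}(x) \leq 1$ or $\rank_{4\times 4}(x) \leq 2$).

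This suggests organizing the $17$ small rank matrices into four classes (corresponding to Lemmas \ref{smallm0}, \ref{smallm2}, \ref{smallm3}, \ref{smallm4}) based on which rank condition excludes $R$ from the open stratum $G'$: for instance, those with $\rank_{2\times 2}(x) = 0$, those with $\rank_{2\times 4}(x) \leq 1$, those with $\rank_{4\times 2}(x) \leq 1$, and those with $\rank_{4\times 4}(x) = 2$, with overlaps assigned to whichever group is most convenient. For each class one selects the appropriate transversal vector field and verifies the condition by a small block computation comparing the explicit form of $x_R\cdot x_{\oleft}$ (resp.\ $x_{\oright}\cdot x_R$) against the block pattern of $\mathfrak{p}\cdot x_R + x_R\cdot \mathfrak{p}^\tau$.

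The extension from $x_R$ to a general $y \in G_R$ uses the $P$- (or $P^\tau$-) invariance to reduce to points of the form $x_R p_2$ with $p_2 \in P^\tau$ (or $p_1 x_R$ with $p_1 \in P$), on which the transversality condition becomes whether $\Ad(p_2) x_{\oleft}$ lies in $\Ad(x_R^{-1})\mathfrak{p} + \mathfrak{p}^\tau$. One shows that a specific block entry forced to vanish in $\Ad(x_R^{-1})\mathfrak{p} + \mathfrak{p}^\tau$ remains nonzero in $\Ad(p_2) x_{\oleft}$, using the block-lower-triangular structure of $P^\tau$. The main obstacle will be the combinatorial bookkeeping across the $17$ cases; some intermediate rank matrices with mixed row and column deficiencies may require a more delicate choice of representative, but the overall strategy remains as described.
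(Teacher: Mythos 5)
Your overall strategy---Shalika-type transversality of the shift vector fields $X_{\oleft},X_{\oright}$, checked case by case over the $P$-$P^{\tau}$ double cosets, using $P$-left invariance of the transversality condition for $X_{\oleft}$---is the same as the paper's, so you are on the right track. Two points deserve attention.

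First, the reduction you invoke is weaker than the one the paper uses, and this is precisely what keeps the bookkeeping manageable. You correctly note that transversality of $X_{\oleft}$ at $y$ depends only on the $P$-left coset $Py$, and then plan to verify nonvanishing of an entry of $\Ad(p_2)x_{\oleft}$ for all $p_2\in P^{\tau}$. The paper's Lemma~\ref{smallm1} improves this: the condition is invariant under the full $P$-$S^{\tau}$ double coset of $y$, thanks to the identity $x_{\oleft}q-qx_{\oleft}\in\Lie(P^\tau)$ valid for $q\in S^{\tau}$ (this uses the central Levi of $S$ and fails for general elements of $P^\tau$). This invariance is what allows the paper to normalize representatives with specific block entries fixed (for instance $x_{33}=\delta_2=\diag(0,1)$ in the proof of Lemma~\ref{smallm2}). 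For the odd-rank strata the forced vanishing in $\mathfrak{p}x+x\mathfrak{p}^\tau$ is not a whole $2\times 2$ block but a sub-block condition (of the form ``$\delta_2 u$''), which is straightforward to exploit from a normalized representative but much less so from an arbitrary $x_Rp_2$ with $x_R$ a $0$-$1$ permutation matrix. Your plan can succeed, but you should expect it to be considerably more involved in precisely these cases.

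Second, your heuristic assignment of vector fields to rank deficiencies runs opposite to the paper's case division. You attach $X_{\oleft}$ to column-type deficiencies ($\rank_{4\times 2}\le 1$) and $X_{\oright}$ to row-type ($\rank_{2\times 4}\le 1$); but Lemma~\ref{smallm2} applies $X_{\oleft}$ precisely when the second \emph{row} of $R$ is $[1\;\,1]$ (so $\rank_{2\times 4}=\rank_{2\times 2}=1$), and Lemma~\ref{smallm3} applies $X_{\oright}$ when the second \emph{column} is $[1\;\,1]^\tau$. The obstruction produced in $\mathfrak{p}x+x\mathfrak{p}^\tau$ at a given block depends jointly on the block pattern of $x$ and the triangular structure of both $\mathfrak{p}$ and $\mathfrak{p}^\tau$, so ``column shift detects column deficiency'' does not reliably predict which vector field works; you should run the computation before trusting this intuition to organize the seventeen cases.
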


\begin{lemp}\label{smallm0}
If the lower right entry of $R$ is zero, then $X_{\oleft}$ is
transversal to $G_R$.
\end{lemp}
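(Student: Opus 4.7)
The approach is to describe the tangent space $\oT_x(G_R)$ via the orbit structure and then read off transversality from two small block computations. Writing a general $x \in G_R$ in $2 \times 2$ block form as $x = (a_{ij})_{1 \le i,j \le 3}$, the hypothesis that the lower right entry of $R$ is zero amounts to the identity $a_{33} = 0$ on $G_R$. I first compute
\[
  X_{\oleft}(x) = x \cdot x_{\oleft} = \begin{pmatrix} 0 & a_{11} & a_{12} \\ 0 & a_{21} & a_{22} \\ 0 & a_{31} & a_{32} \end{pmatrix},
\]
so only the $(3,3)$ and $(3,2)$ blocks, namely $a_{32}$ and $a_{31}$, matter below.

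Next, since $G_R$ is a single $P$-$P^\tau$ double coset, $\oT_x(G_R) = \p \cdot x + x \cdot \p^\tau$, where $\p$ is the (block upper triangular) Lie algebra of $P$. A straightforward calculation shows that for every $y,z \in \p$,
\[
 (yx+xz^\tau)_{33} = y_{33} a_{33} + a_{33} z_{33}^\tau, \quad
 (yx+xz^\tau)_{32} = y_{33} a_{32} + a_{32} z_{22}^\tau + a_{33} z_{23}^\tau.
\]
The point to notice is that every contribution to these two blocks is a multiple of $a_{32}$ or $a_{33}$ (there is no $a_{31}$ term, precisely because $\p$ is upper triangular). Thus $a_{33} = 0$ already forces $v_{33} = 0$ for every $v \in \oT_x(G_R)$, and in the sub-case $a_{32} = 0$ it forces also $v_{32} = 0$.

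The conclusion then splits into two cases. If $a_{32} \ne 0$, then $(X_{\oleft}(x))_{33} = a_{32} \ne 0$ violates $v_{33} = 0$, so $X_{\oleft}(x) \notin \oT_x(G_R)$. If $a_{32} = 0$, then together with $a_{33} = 0$ the bottom two block rows of $x$ reduce to $(a_{31},0,0)$; linear independence of these rows, forced by $x \in \GL_6(\K)$, makes $a_{31}$ of full rank $2$, so $(X_{\oleft}(x))_{32} = a_{31} \ne 0$ violates the stronger constraint $v_{32} = 0$. In either case $X_{\oleft}$ is transversal to $G_R$ at $x$.

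No serious obstacle is anticipated; the only subtlety is the tangent space calculation, and in particular the observation that the collapse of the $(3,2)$ block of $\p x + x \p^\tau$ in the sub-case $a_{32} = 0$ is dictated purely by the block upper triangular shape of $\p$. This is exactly what lets $a_{31}$ (rather than $a_{32}$) serve as the obstruction to tangency in the harder sub-case.
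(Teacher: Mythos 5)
Your proof is correct and takes essentially the same approach as the paper: both rely on the same two block computations, first noting that the $(3,3)$ block of $\Lie(P)x + x\Lie(P^\tau)$ vanishes when $a_{33}=0$, and then (when $a_{32}=0$ also) noting that the $(3,2)$ block vanishes, so invertibility of $x$ forces an obstruction. The only cosmetic difference is that you phrase it as a forward case split on $a_{32}$ while the paper runs the same chain as a proof by contradiction.
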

\begin{proof}
Assume that there is an
\[
 x=\left[\begin{array}{ccc}
    x_{11}&x_{12}&x_{13}\\
     x_{21}&x_{22}&x_{23}\\
      x_{31}&x_{32}&0\\
  \end{array}\right]\in G_R
\]
such that
\[
  X_{\oleft}(x)\in \oT_x(G_R),
\]
i.e.,
\[
  \left[\begin{array}{ccc}
   0&x_{11}&x_{12}\\
    0& x_{21}&x_{22}\\
     0& x_{31}&x_{32}\\
  \end{array}\right]\in \Lie(P)x+x\Lie(P^\tau).
\]
Note that the lower right $2\times 2$ block of very element of
$\Lie(P)x+x\Lie(P^\tau)$ is $0$. Therefore $x_{32}=0$, which
further implies that  the lower right $2\times 4$ block of very
element of $\Lie(P)x+x\Lie(P^\tau)$ is $0$. Therefore $x_{31}=0$.
This contradicts the fact that $x$ is invertible.
\end{proof}

The following lemma provides a technical simplification.

\begin{lemp}\label{smallm1}
Let $x,y$ be two matrices in $G_R$ such that $PxS^\tau=PyS^\tau$.
Then
\[
  X_{\oleft}(x)\in \oT_x(G_R)\quad\textrm{if and only if}\quad X_{\oleft}(y)\in
  \oT_y(G_R).
\]
\end{lemp}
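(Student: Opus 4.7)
The plan is to exploit the $P$-equivariance of the left-invariant vector field $X_{\oleft}$, together with a mild compatibility condition under right translation by $S^\tau$. Writing $y=pxs^\tau$ with $p\in P$ and $s\in S$, and using that $\Lie(P)$ is $\Ad_p$-stable while $\Lie(P^\tau)$ is $\Ad_{s^\tau}$-stable (since $s^\tau\in S^\tau\subset P^\tau$), one obtains
\[
\oT_y(G_R)=\Lie(P)\,y+y\,\Lie(P^\tau)=p\bigl(\Lie(P)\,x+x\,\Lie(P^\tau)\bigr)s^\tau=p\,\oT_x(G_R)\,s^\tau.
\]
On the other hand $X_{\oleft}(y)=yx_{\oleft}=pxs^\tau x_{\oleft}=p\,x\cdot\Ad_{s^\tau}(x_{\oleft})\cdot s^\tau$, so $X_{\oleft}(y)\in \oT_y(G_R)$ if and only if $x\cdot\Ad_{s^\tau}(x_{\oleft})\in \oT_x(G_R)$.

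The lemma therefore reduces to the single algebraic fact
\[
\Ad_{s^\tau}(x_{\oleft})\equiv x_{\oleft}\pmod{\Lie(P^\tau)}\qquad\text{for every } s\in S,
\]
since then $x\cdot\Ad_{s^\tau}(x_{\oleft})$ and $xx_{\oleft}=X_{\oleft}(x)$ differ by an element of $x\,\Lie(P^\tau)\subset\oT_x(G_R)$, making the two conditions $X_{\oleft}(y)\in\oT_y(G_R)$ and $X_{\oleft}(x)\in\oT_x(G_R)$ equivalent.

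To verify this congruence, I would simply compute directly in $2\times 2$ blocks: writing $s=\begin{pmatrix}a&b&d\\ 0&a&c\\ 0&0&a\end{pmatrix}$, the matrix $s^\tau$ is block lower-triangular with identical diagonal blocks $a^\tau$, and expanding $s^\tau x_{\oleft}s^{-\tau}$ shows that the two block super-diagonal entries of $\Ad_{s^\tau}(x_{\oleft})$ remain equal to $I_2$, while every other block lies on or strictly below the block diagonal, i.e., in $\Lie(P^\tau)$. I do not foresee any substantive obstacle here: the cancellation of the unwanted super-diagonal terms is forced by the fact that the three diagonal blocks of $s\in S$ coincide, and the rest is straightforward bookkeeping with $2\times 2$ blocks.
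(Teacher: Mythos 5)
Your proposal is correct and follows essentially the same path as the paper's. The paper verifies the commutator relation $x_{\oleft}q - qx_{\oleft}\in\Lie(P^\tau)$ for $q\in S^\tau$ and then conjugates through, whereas you phrase the same fact as $\Ad_{s^\tau}(x_{\oleft})\equiv x_{\oleft}\pmod{\Lie(P^\tau)}$; since $q=s^\tau$ normalizes $\Lie(P^\tau)$, the two statements are equivalent (one is obtained from the other by right-multiplying by $q^{-1}$), and the block computation you outline — that equality of the three diagonal blocks of $s$ is exactly what makes the super-diagonal blocks of $\Ad_{s^\tau}(x_{\oleft})$ stay $I_2$ — does go through.
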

\begin{proof}
Write
\[
  y=pxq, \quad p\in P, q\in S^\tau,
\]
and assume that $X_{\oleft}(x)\in \oT_x(G_R)$, i.e.,
\[
  xx_{\oleft}\in \Lie(P)x+x\Lie(P^\tau).
\]
One easily checks that
\[
  x_{\oleft}q-qx_{\oleft}\in \Lie(P^\tau).
\]
Therefore
\[\begin{aligned}
   yx_{\oleft}&=px q x_{\oleft}\\
   &\in pxx_{\oleft}q+ px\Lie(P^\tau)\\
   &\subset p(\Lie(P)x+x\Lie(P^\tau))q+px\Lie(P^\tau)\\
   &=\Lie(P)y+y\Lie(P^\tau).
   \end{aligned}
   \]
The last equality holds because
\[
  p\Lie(P)=\Lie(P)p=\Lie(P), \textrm{}\quad
  q\Lie(P^\tau)=\Lie(P^\tau)q=\Lie(P^\tau).
\]
\end{proof}

\begin{lemp}\label{smallm2}
If the second row of $R$ is  $[1\,\,\,\, 1]$, then $X_{\oleft}$ is
transversal to $G_R$.
\end{lemp}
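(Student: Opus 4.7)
The plan is to mirror the direct block-computation style used in the proof of Lemma \ref{smallm0}. Suppose, for contradiction, that $X_{\oleft}(x) \in \oT_x(G_R)$ for some $x \in G_R$. This translates to
\[
xx_{\oleft} = Ax + xB, \qquad A \in \Lie(P),\ B \in \Lie(P^\tau),
\]
and I will focus only on the bottom row of block equations (blocks $(3,2)$ and $(3,3)$), extracting
\[
x_{32} = A_{33}x_{33} + x_{33}B_{33}, \qquad x_{31} = A_{33}x_{32} + x_{32}B_{22} + x_{33}B_{32}.
\]

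The hypothesis that the second row of $R$ is $[1\ 1]$ means $\rank x_{33} = 1$ and $\rank(x_{32}\mid x_{33}) = 1$. From this row-rank $1$ condition on the $2\times 4$ matrix $(x_{32}\mid x_{33})$, there exist $\binom{c}{d}\in\K^2\setminus\{0\}$ and rows $v_2,v_3\in\K^{1\times 2}$ with $v_3\neq 0$ (forced by $\rank x_{33}=1$) such that
\[
x_{32} = \binom{c}{d}v_2, \qquad x_{33} = \binom{c}{d}v_3.
\]
Plugging these into the $(3,3)$ equation yields $\binom{c}{d}(v_2 - v_3B_{33}) = (A_{33}\binom{c}{d})\,v_3$. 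Comparing ranks on both sides (each side has rank at most $1$), and using $v_3\neq 0$, I expect to conclude that the column $A_{33}\binom{c}{d}$ must be a scalar multiple of $\binom{c}{d}$, say $A_{33}\binom{c}{d} = \mu\binom{c}{d}$ for some $\mu\in\K$.

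Substituting $A_{33}\binom{c}{d} = \mu\binom{c}{d}$ into the $(3,2)$ equation then gives
\[
x_{31} = \binom{c}{d}\bigl(\mu v_2 + v_2B_{22} + v_3 B_{32}\bigr),
\]
so $x_{31}$ also has column direction $\binom{c}{d}$. Consequently all three blocks $x_{31},x_{32},x_{33}$ have a common rank-$1$ column direction, meaning the bottom two rows of $x$ are proportional. This forces $\rank x\le 5$, contradicting $x\in \GL_6$. The argument is uniform over all four rank matrices with second row $[1\ 1]$ and does not require Lemma \ref{smallm1} or passage to a preferred representative.

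The only subtle step is the rank comparison that forces $A_{33}\binom{c}{d}$ to be proportional to $\binom{c}{d}$: I must be careful to allow the scalar $\mu = 0$ (which corresponds to the degenerate subcase where both sides of the $(3,3)$ equation vanish) and to use $v_3\neq 0$ exactly at that step. Everything else is a straightforward block calculation.
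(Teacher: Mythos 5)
Your proof is correct, and it takes a genuinely different route from the paper. The paper's proof first invokes the reduction Lemma \ref{smallm1} (tangency is constant on $P$\nobreakdash-$S^\tau$ double cosets) to replace $x$ by a normal form with $x_{32}=0$, $x_{33}=\delta_2$; the tangency condition then collapses to a one-line observation that the middle $2\times 2$ block of the bottom rows must look like $\delta_2 u$, killing the first row of $x_{31}$. You instead work with a general $x\in G_R$, extract the block equations at $(3,3)$ and $(3,2)$ from $xx_{\oleft}=Ax+xB$, and use the rank-$1$ factorization $(x_{32}\mid x_{33})=\binom{c}{d}(v_2\mid v_3)$ together with a column-span comparison in the $(3,3)$ equation to force $A_{33}\binom{c}{d}=\mu\binom{c}{d}$, then feed this into the $(3,2)$ equation to see that $x_{31}$ also has column direction $\binom{c}{d}$, so the bottom two rows of $x$ are proportional. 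The rank-comparison step is handled correctly (including the $\mu=0$ degeneracy, using $v_3\neq 0$), and the final contradiction with invertibility of $x$ is the same in spirit. What each buys: the paper's reduction keeps the computation minimal but requires a separate lemma and a choice of representative; your argument is self-contained and uniform over the four rank matrices at the cost of a slightly longer computation and an explicit rank/eigenvector step.
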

\begin{proof}
Let $R$ be as in the lemma. Then every matrix in $G_R$ is in the
same $P$-$S^\tau$ double coset with a matrix of the form
\[
  x=\left[\begin{array}{ccc}
    x_{11}&x_{12}&x_{13}\\
     x_{21}&x_{22}&x_{23}\\
      x_{31}&0&\delta_2\\
  \end{array}\right]\in G_R,
\]
where
\[
  \delta_2=\left[\begin{array}{cc}
    0&0\\
    0&1\\
  \end{array}\right].
\]

Assume that
\[
   X_{\oleft}(x)\in \oT_x(G_R),
\]
i.e.,
\[
  \left[\begin{array}{ccc}
    0&x_{11}&x_{12}\\
    0& x_{21}&x_{22}\\
     0& x_{31}&0\\
  \end{array}\right]\in \Lie(P)x+x\Lie(P^\tau).
\]
Note that the middle $2\times 2$ block of the last two rows of
very matrix in $\Lie(P)x+x\Lie(P^\tau)$ has the form
\[
  \delta_2 u, \quad u\in \gl_2(\K).
\]
This implies that the first row of $x_{31}$ is zero, and
consequently, the fifth row of $x$ is zero, which contradicts the
fact that $x$ is invertible.
\end{proof}

Similarly, we have
\begin{lemp}\label{smallm3}
If the second column of $R$ is $\left[\begin{array}{c}
    1\\
    1\\
  \end{array}\right]$, then $X_{\oright}$ is transversal
to $G_R$.
\end{lemp}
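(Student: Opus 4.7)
The plan is to mirror the proof of Lemma \ref{smallm2}, swapping the roles of left and right throughout.

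First I would establish the right-sided analogue of Lemma \ref{smallm1}: if $x, y \in G_R$ lie in the same $S\,x\,P^\tau$ double coset, then $X_{\oright}(x) \in \oT_x(G_R)$ if and only if $X_{\oright}(y) \in \oT_y(G_R)$. The verification is parallel to that of Lemma \ref{smallm1}, replacing the identity $x_{\oleft}s^\tau - s^\tau x_{\oleft} \in \Lie(P^\tau)$ for $s \in S$ by the direct check
$$
x_{\oright} s - s x_{\oright} \in \Lie(P) \quad \text{for all } s \in S,
$$
together with the fact that $\Lie(P)$ (respectively $\Lie(P^\tau)$) is invariant under matrix multiplication by elements of $S \subset P$ (respectively $P^\tau$).

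Next, using this reduction and the hypothesis $\rank_{4\times 2}(x) = \rank_{2\times 2}(x) = 1$, I would bring every $x \in G_R$ into the normal form
$$
x = \left[\begin{array}{ccc} x_{11} & x_{12} & x_{13} \\ x_{21} & x_{22} & 0 \\ x_{31} & x_{32} & \delta_2 \end{array}\right], \qquad \delta_2 = \left[\begin{array}{cc} 0 & 0 \\ 0 & 1 \end{array}\right].
$$
This proceeds in two steps: first, the $\GL_2 \times \GL_2$-action coming from the diagonal block of $S$ and the $(3,3)$-block of $P^\tau$ puts the rank-one matrix $x_{33}$ into the canonical form $\delta_2$; the rank-one constraint on $\begin{pmatrix} x_{23} \\ \delta_2 \end{pmatrix}$ then forces each row of $x_{23}$ to lie in the row span of $\delta_2$, so that the off-diagonal block $c$ of $S$ (which adds multiples of the third block row to the second) can be chosen to eliminate $x_{23}$.

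Finally, supposing $X_{\oright}(x) \in \oT_x(G_R)$, i.e.,
$$
x_{\oright} x = \left[\begin{array}{ccc} 0 & 0 & 0 \\ x_{11} & x_{12} & x_{13} \\ x_{21} & x_{22} & 0 \end{array}\right] \in \Lie(P)\,x + x\,\Lie(P^\tau),
$$
I would observe by a direct block computation that the $(2,3)$-block of every element of $\Lie(P)\,x + x\,\Lie(P^\tau)$ is of the form $u\,\delta_2$ with $u \in \gl_2(\K)$, entirely analogous to the ``$\delta_2 u$'' statement used in Lemma \ref{smallm2}. Reading off the $(2,3)$-block of $x_{\oright} x$ gives $x_{13} = u\,\delta_2$, whose first column vanishes; combined with $x_{23} = 0$ and the zero first column of $\delta_2 = x_{33}$, the fifth column of $x$ is zero, contradicting $x \in \GL_6(\K)$. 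The only real piece of care required is bookkeeping of the left/right asymmetry --- in particular, reading off a column (rather than a row) constraint, and using the lower-right corner of $P^\tau$ together with the upper $S$-action for the normalization.
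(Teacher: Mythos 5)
Your proof is correct and is exactly the left-right mirror of the paper's proof of Lemma \ref{smallm2}, which is what the paper leaves implicit with the word ``Similarly.'' All the key ingredients check out: the identity $x_{\oright}s - sx_{\oright}\in\Lie(P)$ for $s\in S$ gives the right-handed analogue of Lemma \ref{smallm1}, the normal form with $x_{23}=0$ and $x_{33}=\delta_2$ is achievable within an $S$-$P^\tau$ double coset, and the $(2,3)$-block of $\Lie(P)x + x\Lie(P^\tau)$ being of the form $u\delta_2$ forces the fifth column of $x$ to vanish, contradicting invertibility.
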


\begin{lemp}\label{smallm4}
If
\[
  R=\left[\begin{array}{cc}
    2&2\\
    2&2\\
  \end{array}\right],
\]
then $X_{\oleft}$ is transversal to $G_R$.
\end{lemp}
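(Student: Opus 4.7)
The plan is to argue directly for every $x\in G_R$ rather than trying to reduce to a single convenient representative via Lemma \ref{smallm1}, since the $P$-$S^\tau$ orbit structure inside $G_R$ is less transparent than the $P$-$P^\tau$ orbit structure. Write $x=(x_{ij})_{1\le i,j\le 3}$ in $2\times 2$ blocks. The condition $R=\left[\begin{smallmatrix}2&2\\2&2\end{smallmatrix}\right]$ forces $x_{33}\in \GL_2(\K)$ (rank of the $(3,3)$ block is $2$) and, since the lower-right $4\times 4$ block $\left[\begin{smallmatrix}x_{22}&x_{23}\\x_{32}&x_{33}\end{smallmatrix}\right]$ has rank $2$ with its bottom block row already contributing full rank, there is a unique $A\in\gl_2(\K)$ such that $x_{22}=A x_{32}$ and $x_{23}=A x_{33}$, namely $A=x_{23}x_{33}^{-1}$.

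Suppose for contradiction that $X_{\oleft}(x)=x x_{\oleft}\in\oT_x(G_R)=\Lie(P)x+x\Lie(P^\tau)$, and write $x x_{\oleft}=Mx+xN$ with $M\in \Lie(P)$ block upper triangular with diagonal blocks $m_{11},m_{22},m_{33}$, and $N\in\Lie(P^\tau)$ block lower triangular with diagonal blocks $n_{11},n_{22},n_{33}$. Comparing $2\times 2$ blocks gives equations on each block position; only four of them are needed. The $(3,3)$ block reads
\[
 x_{32}=m_{33}x_{33}+x_{33}n_{33},
\]
which solves $n_{33}$ in terms of $m_{33}$. The $(2,3)$ block together with the relations $x_{22}=A x_{32}$ and $x_{23}=Ax_{33}$ simplifies, after substituting the value of $n_{33}$ and cancelling $x_{33}$, to the single identity $m_{23}=Am_{33}-m_{22}A$. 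The $(3,2)$ block gives
\[
 x_{31}=m_{33}x_{32}+x_{32}n_{22}+x_{33}n_{32},
\]
and the $(2,2)$ block gives
\[
 x_{21}=m_{22}x_{22}+m_{23}x_{32}+x_{22}n_{22}+x_{23}n_{32}.
\]
Substituting $x_{22}=Ax_{32}$, $x_{23}=Ax_{33}$, and $m_{23}=Am_{33}-m_{22}A$ into the $(2,2)$ equation causes the $m_{22}A x_{32}$ terms to cancel, and what remains is exactly $A$ applied to the right-hand side of the $(3,2)$ equation, yielding the single relation $x_{21}=Ax_{31}$.

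Combined with $x_{22}=Ax_{32}$ and $x_{23}=Ax_{33}$, this says that the middle block row $(x_{21},x_{22},x_{23})$ equals $A$ times the bottom block row $(x_{31},x_{32},x_{33})$; equivalently, rows $3$ and $4$ of $x$ are $\K$-linear combinations of rows $5$ and $6$. Hence $\rank(x)\le 4<6$, contradicting $x\in \GL_6(\K)=G$. Therefore $X_{\oleft}(x)\notin\oT_x(G_R)$ for every $x\in G_R$, as desired. The only delicate step is the block-algebra bookkeeping in the second paragraph; once the diagonal and super-diagonal equations are used to eliminate $n_{33}$ and $m_{23}$, the collapse of the $(2,2)$ equation to $x_{21}=Ax_{31}$ via the $(3,2)$ equation is immediate and the invertibility of $x$ supplies the contradiction.
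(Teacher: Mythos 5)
Your proof is correct, and it takes a genuinely different route from the paper. The paper invokes Lemma \ref{smallm1} to reduce to a single nice representative $x=\left[\begin{smallmatrix}x_{11}&x_{12}&0\\ x_{21}&0&0\\ 0&0&1\end{smallmatrix}\right]$ in each $P$-$S^\tau$ double coset inside $G_R$, after which the transversality claim is immediate: for that representative the central $2\times 2$ block of any element of $\Lie(P)x+x\Lie(P^\tau)$ vanishes, whereas $(xx_{\oleft})_{22}=x_{21}$ is forced to be invertible, a one-line contradiction. You instead work with an arbitrary $x\in G_R$, extract from the rank condition $\rank_{4\times 4}(x)=\rank_{2\times 2}(x)=2$ the matrix $A=x_{23}x_{33}^{-1}$ with $x_{22}=Ax_{32}$, $x_{23}=Ax_{33}$, and then feed the $(3,3),(2,3),(3,2),(2,2)$ block equations of $xx_{\oleft}=Mx+xN$ into each other to deduce $x_{21}=Ax_{31}$; since the middle block row is then $A$ times the bottom block row, $x$ has rank at most $4$, contradicting invertibility. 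I verified the block bookkeeping: your derivation $m_{23}=Am_{33}-m_{22}A$ from the $(2,3)$ and $(3,3)$ blocks is right, and substituting it (together with $x_{22}=Ax_{32}$, $x_{23}=Ax_{33}$) into the $(2,2)$ block equation does cancel the $m_{22}Ax_{32}$ terms and reproduces $A$ times the right side of the $(3,2)$ equation. The trade-off is clear: your argument avoids any dependence on Lemma \ref{smallm1} and the choice of representative (so is slightly more self-contained), at the cost of about four lines of block algebra instead of one; the paper's argument is shorter at the point of use but leans on the preparatory reduction lemma. Both are valid.
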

\begin{proof}
Every matrix in $G_R$ is in the same $P$-$S^\tau$ double coset
with a matrix of the form
\[
  x=\left[\begin{array}{ccc}
    x_{11}&x_{12}&0\\
     x_{21}&0&0\\
      0&0&1\\
  \end{array}\right]\in G_R.
\]
Assume that
\[
   X_{\oleft}(x)\in \oT_x(G_R),
\]
i.e.,
\[
  \left[\begin{array}{ccc}
    0&x_{11}&x_{12}\\
    0& x_{21}&0\\
     0&0&0\\
  \end{array}\right]\in \Lie(P)x+x\Lie(P^\tau).
\]
Note that the central $2\times 2$ block of every matrix in
$\Lie(P)x+x\Lie(P^\tau)$ is zero, which implies that $x_{21}=0$.
This contradicts the fact that $x$ is invertible.
\end{proof}

The proof of Proposition \ref{smallm} is now finished. \vsp

\begin{lemp}\label{mp4} There exists a nonzero number $c$, an
element $\lambda\in\K^\times$, and a differential operator
$D_{\oleft}$ on $G$, which is tangential to every $P$-$P^\tau$
double coset of $G$, such that
\[
  \Delta f=(c X_{\oleft}(\lambda)+D_{\oleft})f
  \]
for all $f\in \con^{-\infty}_{\chi}(G)$. Here
$X_{\oleft}(\lambda)$ is the left invariant vector field on $G$
whose tangent vector at $x\in G$ is $\lambda x x_{\oleft}$, and
$x_{\oleft}$ is given in (\ref{xleft}). The same is true if one
replaces ``left" by ``right" everywhere.
\end{lemp}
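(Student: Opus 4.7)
My plan is to expand $\Delta$ in a basis of $\gl_6(\K)$ adapted to the triangular decomposition $\gl_6 = \Lie(N) \oplus \Lie(M) \oplus \Lie(N^{-})$, where $N$ (respectively $N^{-}$) is the unipotent radical of $P$ (respectively $P^\tau$) and $M = P \cap P^\tau$ is the common Levi, and then exploit $\chi$-equivariance to turn the ``bad'' factors into scalars. Pick a basis $\{e_\alpha\}$ of $\Lie(N)$ with $\langle\cdot,\cdot\rangle_\R$-dual basis $\{f_\alpha\} \subset \Lie(N^{-})$, together with dual bases $\{m_i\},\{m_i^*\}$ of $\Lie(M)$. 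Writing $L_Z$ for the left-invariant vector field $x\mapsto xZ$, the Casimir reads
\[
\Delta \;=\; \sum_\alpha \bigl(L_{e_\alpha} L_{f_\alpha} + L_{f_\alpha} L_{e_\alpha}\bigr) \;+\; \sum_i L_{m_i} L_{m_i^*}
\]
(with an analogous contribution from the $iZ$-directions when $\K=\C$, which does not alter the argument). The bracket identity $L_{f_\alpha} L_{e_\alpha} = L_{e_\alpha} L_{f_\alpha} - L_{[e_\alpha,f_\alpha]}$ together with $[e_\alpha,f_\alpha]\in \Lie(M)$ rewrites this as
\[
\Delta \;=\; 2\sum_\alpha L_{e_\alpha} L_{f_\alpha} + D_{\oleft}^{(0)}, \qquad D_{\oleft}^{(0)} := \sum_i L_{m_i} L_{m_i^*} - \sum_\alpha L_{[e_\alpha,f_\alpha]}.
\]
Since $\Lie(M), \Lie(N^{-}) \subseteq \Lie(P^\tau)$, every summand in $D_{\oleft}^{(0)}$ is a product of left-invariant vector fields coming from $\Lie(P^\tau)$, hence tangential to every $P$-$P^\tau$ double coset.

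Next, for any $f \in \con^{-\infty}_\chi(G)$ and any $Z \in \Lie(S^\tau)$, differentiating $f(x e^{tZ}) = \chi_S(e^{tZ^\tau}) f(x)$ gives $L_Z f = d\chi_S(Z^\tau)\, f$ as generalized functions. Because $\Lie(N^{-}) \subseteq \Lie(S^\tau)$, each $L_{f_\alpha}$ acts on $f$ by the scalar $c_\alpha := d\chi_S(f_\alpha^\tau)$, so
\[
\sum_\alpha L_{e_\alpha} L_{f_\alpha}\, f = L_{Z^*}\, f, \qquad Z^* := \sum_\alpha c_\alpha\, e_\alpha \;\in\; \Lie(N).
\]
The crucial observation is now that $Z^*$ is a nonzero scalar multiple of $x_{\oleft}$. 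Using $\chi_S = \chi_{\K^\times}(\det a)\,\psi_\K(\tr(b+c))$: the $(1,3)$-block (the $d$-entries) contributes $0$, while on the $(1,2)$ and $(2,3)$-blocks $d\chi_S$ picks out $d\psi_\K(1)\cdot \tr$. Pairing block-by-block with the dual basis collapses each of the two nonzero contributions to $d\psi_\K(1)\cdot I_2$ sitting in that block, producing
\[
Z^* \;=\; d\psi_\K(1)\bigl(I_2^{(12)}+I_2^{(23)}\bigr) \;=\; d\psi_\K(1)\cdot x_{\oleft}.
\]
Hence $\Delta f = 2\,d\psi_\K(1)\cdot X_{\oleft}\, f + D_{\oleft}^{(0)}\, f$, and taking $\lambda = 1 \in \K^\times$ and $c = 2\,d\psi_\K(1)$ (nonzero since $\psi_\K$ is nontrivial) yields the ``left'' form.

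The main obstacle is the block-by-block identification of $Z^*$ with a nonzero multiple of $x_{\oleft}$: the particular shape of $\chi_S$---trivial on the $(1,3)$ block and proportional to $\tr$ on the $(1,2)$ and $(2,3)$ blocks---is precisely what forces the non-tangential residue of $\Delta$ to line up with the distinguished direction $x_{\oleft}$. The ``right'' version is obtained by the mirror argument: expand $\Delta = \sum R_{Z_a} R_{Z_a^*}$ in right-invariant vector fields $R_Z: x\mapsto Zx$, use $\Lie(N) \subseteq \Lie(S)$ so that $R_{e_\alpha} f = d\chi_S(e_\alpha)\, f$ is a scalar, and rerun the same computation; now the non-tangential residue is a nonzero multiple of $X_{\oright}$, while the remainder is a sum of products of right-invariant vector fields from $\Lie(P)$, hence tangential to every $P$-$P^\tau$ double coset.
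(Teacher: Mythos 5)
Your argument is correct and is essentially the paper's proof of Lemma 4.8: both use the triangular decomposition $\g = \n \oplus \l \oplus \n^\tau$ to write $\Delta = 2\Delta_1 + (\text{an element of }\oU(\l))$ with $\Delta_1 = \sum_i X_i X_i'$, invoke $\chi$-equivariance to convert the $\n^\tau$-derivatives into scalars, and observe that the resulting first-order operator is a nonzero multiple of $X_{\oleft}(\lambda)$ while the $\oU(\l)$ remainder is tangential to every $P$-$P^\tau$ double coset. The only difference is cosmetic (the paper chooses $X_1$ perpendicular to $\ker\chi_{\n^\tau}$ and identifies it as $X_{\oleft}(\lambda)$, whereas you compute $Z^*$ block-by-block); note also that your exact constant $d\psi_\K(1)$ requires a small adjustment when $\K=\C$, but the conclusion that $Z^*$ is a nonzero $\C$-multiple of $x_{\oleft}$ still holds.
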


\begin{proof}
The Lie algebra $\g$ of $G$ has a decomposition
\[
  \g=\n+\l+\n^\tau,
\]
where $\n$ is the Lie algebra of the unipotent radical $N$ of $P$,
and $\l$ is the Lie algebra of the Levi factor $\GL_2(\K)\times
\GL_2(\K)\times \GL_2(\K)$. Recall that $\g$ is equipped with the
real trace form. Let $X_1, X_2,\cdots, X_r$ be a basis of $\n$,
and write
\[
  \Delta_1=X_1 X_1'+X_2X_2'+\cdots +X_r X_r'\in \oU(\g),
\]
where $X_1', X_2',\cdots, X_r'$ is the dual basis of $X_1,
X_2,\cdots, X_r$ in $\n^\tau$. Note that $\Delta_1$ is independent
of the choice of basis of $\n$. We identify elements of $\oU(\g)$
with left invariant (real) differential operators on $G$ as usual.
It is then easy to see that
\begin{equation}\label{delta1}
  \Delta-2 \Delta_1 \in \oU(\l).
\end{equation}

Let $d\chi_S$ be the differential of the character $\chi_S$. Write
\[
  \chi_{\n^\tau}(X)=d\chi_S(-X^\tau), \quad X\in \n^\tau,
\]
which defines a character of $\n^\tau$. Then every generalized
function $f\in \con^{-\infty}_{\chi}(G)$ satisfies
\[
  Xf=-\chi_{\n^\tau}(X)f,\quad \textrm{ for all }X\in \n^\tau.
\]

Now choose $X_1$ to be perpendicular to the kernel of
$\chi_{\n^\tau}$. This is unique up to a multiple in $\R^\times$,
and has the form $X_{\oleft}(\lambda)$ for some $\lambda\in
\K^\times$. This choice of $X_1$ also implies that
\[
  \chi_{\n^\tau}(X_2')=\chi_{\n^\tau}(X_3')=\cdots=\chi_{\n^\tau}(X_r')=0,
\]
and $\chi_{\n^\tau}(X_1')$ is a nonzero number.

Therefore
\begin{equation}\label{delta2}
  \Delta_1 f=-\chi_{\n^\tau}(X_1') X_{\oleft}(\lambda) f\quad\textrm{for all }f\in
  \con^{-\infty}_{\chi}(G).
\end{equation}
Equations (\ref{delta1}) and (\ref{delta2}) will now imply the
lemma, in view of the fact that a differential operator in
$\oU(\l)$ is tangential to every $P$-$P^\tau$ double coset.
\end{proof}

Now we are ready to prove Proposition \ref{mp5}. Take a sequence
\[
  G'=G_{\open}^4\subset
  G_{\open}^5\subset\cdots
  \subset G_{\open}^{20}\subset
  G_{\open}^{21}=G,
\]
of open subsets of $G$ so that every difference
$G_{\open}^{i}\setminus G_{\open}^{i-1}$ is a $P$-$P^\tau$ double
coset, $i=5,6,\cdots,21$. Denote by $f_i$ the restriction of $f\in
\con^{-\infty}_{\chi}(G)$ to $G_{\open}^{i}$. We shall use
induction to show that all $f_i$'s are zero. Thus assume that
$f_{i-1}=0$.

By Proposition \ref{smallm}, either $X_{\oleft}$ or $X_{\oright}$
is transversal to $G_{\open}^{i}\setminus G_{\open}^{i-1}$.
Without loss of generality assume that $X_{\oleft}$ is transversal
to $G_{\open}^{i}\setminus G_{\open}^{i-1}$. Lemma \ref{mp4}
implies that
\[
  (X_{\oleft}(\lambda)+D)f_i=0,
\]
where $D$ is a differential operator on $G_{\open}^{i}$ which is
tangential to $G_{\open}^{i}\setminus G_{\open}^{i-1}$. It is
clear that $X_{\oleft}$ is transversal to $G_{\open}^{i}\setminus
G_{\open}^{i-1}$ will imply the same for $X_{\oleft}(\lambda)$.
Invoking Lemma \ref{tvanishing2}, we see that $f_i=0$.

\section{A submanifold $Z_4$ of $\GL_4\times \GL_2$}\label{sm4}

As always, we equip $G=\GL_6(\K)$ with the bi-invariant pseudo
Riemannian metric whose restriction to $\oT_e(G)=\gl_6(\K)$ is the
real trace form $\la\,,\,\ra_{\R}$, given in (\ref{rtrace}).

As in the Introduction, write $G_{4,2}=\GL_4(\K)\times \GL_2(\K)$,
which embeds into $G$ in the usual way. Then $G_{4,2}$ is a
nondegenerate submanifold of $G$, with $\oT_e(M)=\gl_4(\K)\times
\gl_2(\K)$. Thus $G_{4,2}$ is itself a pseudo Riemannian manifold.

Denote
\[
   S_{4,2}=(\GL_4(\K)\times \GL_2(\K))\cap S =\left\{\,\left[\begin{array}{ccc} a&b&0\\ 0&a&0\\0&0&a\\
                   \end{array}\right]\in G\right\},
\]
\begin{equation}
  H_{4,2}=S_{4,2}\times S_{4,2} \subset H=S\times S,
\end{equation}
and the character $\chi_{4,2}=\chi |_{H_{4,2}}$.

Let $Z_4$ be the following $H_{4,2}$ stable submanifold of
$G_{4,2}$:
\begin{equation}
\label{z4}
 Z_4=\left\{\,\left[\begin{array}{ccc} a_{11}&a_{12}&0\\
    a_{21}&a_{22}&0\\
     0&0&y
                   \end{array}\right]\in G_{4,2}\mid
   \textrm{$y^{-1}a_{22}$ is nilpotent and nonzero}\right\}.
\end{equation}

The purpose of this section is to prove the following proposition.
This will take a number of steps.

\begin{m4}\label{m4}
As an $H_{4,2}$ submanifold of $G_{4,2}$, $Z_4$ has
$\oU_{\chi_{4,2}}\oM$ property.
\end{m4}

Denote by $\fZ_4$ all matrices in $G_{4,2}$ of the form
\begin{equation}\label{xequals}
  x=\left[\begin{array}{cccccc} x_{11}&x_{12}&x_{13}&0&0&0\\
  x_{21}&x_{22}&x_{23}&0&0&0\\
  x_{31}&x_{32}&0&0&0&0\\
  0&0&0&1&0&0\\
   0&0&0&0&0&1\\
    0&0&0&0&1&0\\
  \end{array}\right].
  \end{equation}

\begin{m4slice}
The submanifold $\fZ_4$ is an $H_{4,2}$ slice of $Z_4$.
\end{m4slice}

\begin{proof}
Let $x\in Z_4$ be as in (\ref{z4}). Define
\[
  \bar{\phi}(x)=\left[\begin{array}{cc}
    a_{22}&0\\
     0&y
                   \end{array}\right]\in \gl_4(\K).
\]
Note that $\bar{\phi}(\fZ_4)$ consists of a single matrix
\[
  \bar{x}_0=\left[\begin{array}{cccc}
  0&0&0&0\\
  0&1&0&0\\
  0&0&0&1\\
  0&0&1&0\\
  \end{array}\right].
\]
The action of $H_{4,2}$ on $Z_4$ descents to a transitive action on
the quotient manifold
\[
  \bar{Z_4}=\{\bar{\phi}(x)\mid x\in Z_4\}.
\]
Therefore to show that the $H_{4,2}$ equivariant action map
\[
  \rho_{\fZ_4}: H_{4,2}\times \fZ_4\rightarrow Z_4
\]
is a surjective submersion, it suffices to show the same for its
restriction map
\[
    (\bar{\phi}\circ\rho_{\fZ_4})^{-1}(\bar{x}_0)\rightarrow
  \bar{\phi}^{-1}(\bar{x}_0).
\]
Denote by $N_{4,2}$ the unipotent radical of $S_{4,2}$. Then
\[
   (N_{4,2}\times N_{4,2})\times \fZ_4\subset
   (\bar{\phi}\circ\rho_{\fZ_4})^{-1}(\bar{x}_0),
\]
and hence it suffices to show that the action map
\begin{equation}\label{submersionx}
   (N_{4,2}\times N_{4,2})\times \fZ_4\rightarrow \bar{\phi}^{-1}(\bar{x}_0)
\end{equation}
is a surjective submersion.

Now let
\[
  x=\left[\begin{array}{cccccc} x_{11}&x_{12}&x_{13}&x_{14}&0&0\\
  x_{21}&x_{22}&x_{23}&x_{24}&0&0\\
  x_{31}&x_{32}&0&0&0&0\\
  x_{41}&x_{42}&0&1&0&0\\
   0&0&0&0&0&1\\
    0&0&0&0&1&0\\
  \end{array}\right]\in \bar{\phi}^{-1}(\bar{x}_0) .
\]
Then $u(x)xv(x)\in \fZ_4$, with
\[
 u(x)=\left[\begin{array}{cccccc} 1&0&0&-x_{14}&0&0\\
  0&1&0&-x_{24}&0&0\\
  0&0&1&0&0&0\\
  0&0&0&1&0&0\\
   0&0&0&0&1&0\\
    0&0&0&0&0&1\\
  \end{array}\right]
\]
and
\[
   v(x)=\left[\begin{array}{cccccc} 1&0&0&0&0&0\\
  0&1&0&0&0&0\\
  0&0&1&0&0&0\\
  -x_{41}&-x_{42}&0&1&0&0\\
   0&0&0&0&1&0\\
    0&0&0&0&0&1\\
  \end{array}\right],
\]
which proves that the map (\ref{submersionx}) is surjective. One
shows similarly that the differential of the map (\ref{submersionx})
is also surjective.
\end{proof}

Let
\[
  \fZ_{4,1}=\{\,\textrm{$x\in \fZ_4$ of the form (\ref{xequals}) with
$x_{13}=x_{31}$}\,\}.
\]

\begin{m4stable}
The closed submanifold $\fZ_{4,1}$ is relatively $H_{4,2}$ stable in
$\fZ_4$.
\end{m4stable}
\begin{proof}
Let $x\in \fZ_{4,1}$,
\[
   g=\left[\begin{array}{ccc} a&b&0\\ 0&a&0\\0&0&a\\
                   \end{array}\right]\in S_{4,2} \quad \textrm{and}\quad  g'=\left[\begin{array}{ccc} a'&0&0\\ b'&a'&0\\0&0&a'\\
                   \end{array}\right]\in S_{4,2}^\tau.
\]
We need to show that $gxg'\in \fZ_{4,1}$, provided that $gxg'\in
\fZ_4$. The condition $gxg'\in \fZ_4$ implies that
\[
   a \left[\begin{array}{cc} 0&1\\ 1&0\\
                   \end{array}\right]a'=\left[\begin{array}{cc} 0&1\\ 1&0\\
                   \end{array}\right]\quad \textrm{and}\quad a \left[\begin{array}{cc} 0&0\\ 0&1\\
                   \end{array}\right]a'=\left[\begin{array}{cc} 0&0\\ 0&1\\
                   \end{array}\right],
\]
which is equivalent to
\[
  a=\alpha \left[\begin{array}{cc} 1&0\\ t&1\\
                   \end{array}\right] \quad \textrm{and}\quad  a'=\alpha^{-1} \left[\begin{array}{cc} 1&-t\\
                   0&1\\
                   \end{array}\right],
\]
for some $\alpha\in \K^\times$ and $t\in \K$. It is now
straightforward to check that $gxg'\in \fZ_{4,1}$.
\end{proof}

\noindent {\bf Remark}: In the sequel, we will skip the verification
when we assert that a submanifold is relatively stable or is a slice
with respect to a certain group action.

\vsp

Write
\[
  Z_{4,1}=H_{4,2} \fZ_{4,1},
\]
which is a closed submanifolds of $Z_4$, by Lemma
\ref{localization1}.

\begin{m42}
\label{m42} The submanifold $Z_4\setminus Z_{4,1}$ is unipotently
$\chi_{4,2}$-incompatible.
\end{m42}
\begin{proof}
Let $x\in \fZ_4\setminus \fZ_{4,1}$ be as in (\ref{xequals}) and
write
\[
  u(x,t)=\left[\begin{array}{cccccc}
    1&0&x_{13}t&0&0&0\\
     0&1&x_{23}t&0&0&0\\
      0&0&1&0&0&0\\
      0&0&0&1&0&0\\
      0&0&0&0&1&0\\
      0&0&0&0&0&1\\
  \end{array}\right]
\]
and
\[
  v(x,t)=
  \left[\begin{array}{cccccc}
    1&0&0&0&0&0\\
     0&1&0&0&0&0\\
      tx_{31}&tx_{32}&1&0&0&0\\
      0&0&0&1&0&0\\
      0&0&0&0&1&0\\
      0&0&0&0&0&1\\
  \end{array}\right].
\]
Then
\[
  u(x,t)x=xv(t,x),  \quad\textrm{i.e.,} \quad (u(x,t),v(x,t)^{-\tau})x=x.
\]
Since
$x_{13}\neq x_{31}$,
\[
  \chi_{4,2}(u(x,t),v(x,t)^{-\tau})= \psi_{\K}(x_{13}t-x_{31}t)\neq 1
\]
for a suitably chosen $t\in \K$. This proves the lemma.
\end{proof}

Write
\[
  \fZ_{4,2}=\{\,\textrm{$x\in \fZ_{4,1}$ of the form (\ref{xequals}) with
$x_{13}=x_{31}=0$}\,\},
\]
which is a relatively $H_{4,2}$ stable closed submanifold of
$\fZ_{4,1}$. Therefore
\[
   Z_{4,2}=H_{4,2} \fZ_{4,2}
\]
is a closed submanifold of $Z_{4,1}$.

\begin{m43}\label{m43}
The submanifold $Z_{4,1}\setminus Z_{4,2}$ is metrically proper in
$G_{4,2}$.
\end{m43}

\begin{proof}
Denote by $\fZ_{4,1}'$ all matrices in $G_{4,2}$ of the form
\begin{equation}\label{x15}
 x=\left[\begin{array}{cccccc} 0&0&a&0&0&0\\
  0&x_{22}&0&0&0&0\\
  a&0&0&0&0&0\\
  0&0&0&1&0&0\\
  0&0&0&0&0&1\\
  0&0&0&0&1&b\\
  \end{array}\right],
  \end{equation}
which forms an $H_{4,2}$ slice of $Z_{4,1}\setminus Z_{4,2}$.

Let $x$ be as in (\ref{x15}). Then one checks that
\[
   \oT_x (Z_{4,1})=\oT_x ({\fZ_{4,1}'})+(\Lie S_{4,2})x+x(\Lie S_{4,2})^\tau\subset
   \gl_4(\K)_{13=31}\times \gl_2(\K),
\]
where $\gl_4(\K)_{13=31}$ is the set of matrices in $\gl_4(\K)$
whose $(1,3)$ entry equals its $(3,1)$ entry. We shall adopt similar
notations in the sequel.

Let
\[
  x'=e_{13}-e_{31}\in \gl_6(\K),
\]
where $e_{ij}$ denotes the matrix with $1$ at the $(i,j)$ entry
and $0$ elsewhere. Then
\[
  \gl_4(\K)_{13=31}\times
  \gl_2(\K)\subset(\K x')^\perp,
  \]
where $\perp$ denotes the orthogonal complement with respect to the
real trace form. Consequently,
\[
  x^{-1}\oT_x (Z_{4,1})\subset x^{-1}(\K x')^\perp =(\K xx')^\perp.
\]
Note that
\[
  xx'=a(e_{11}-e_{33}),
\]
which spans a nondegenerate $\K$ subspace of $\oT_e(G_{4,2})$. This
implies that $x^{-1}\oT_x (Z_{4,1})$ is contained in a proper
nondegenerate subspace of $\oT_e(G_{4,2})$. Therefore by invariance
of the metric, $\oT_x (Z_{4,1})$ is contained in a nondegenerate
proper subspace of $\oT_x(G_{4,2})$, for any $x \in Z_{4,1}\setminus
Z_{4,2}$.
\end{proof}

Denote by ${\fZ_{4,2}}'$ all matrices in $\fZ_{4,2}$ of the form
\begin{equation}\label{x2}
 x=\left[\begin{array}{cccccc}
   x_{11}&0&0&0&0&0\\
  0&0&x_{23}&0&0&0\\
  0&x_{32}&0&0&0&0\\
   0&0&0&1&0&0\\
  0&0&0&0&0&1\\
  0&0&0&0&1&0\\
  \end{array}\right],
  \end{equation}
which also forms an $H_{4,2}$ slice of $Z_{4,2}$.

Write
\[
  {\fZ_{4,3}^{1}}'=\{\,\textrm{$x\in{\fZ_{4,2}}'$ of the form (\ref{x2}) with
$x_{23}=x_{32}$}\,\},
\]
and
\[
  {\fZ_{4,3}^2}'=\{\,\textrm{$x\in {\fZ_{4,2}}'$ of the form (\ref{x2}) with $x_{23}x_{32}+x_{11}=0$}\,\}.
\]

They are both relatively $H_{4,2}$ stable closed submanifolds of
${\fZ_{4,2}}'$. Therefore both
\[
  Z_{4,3}^1=H_{4,2}{\fZ_{4,3}^{1}}'  \quad \textrm{and}\quad  Z_{4,3}^2=H_{4,2} {\fZ_{4,3}^{2}}'
\]
are closed submanifolds of $Z_{4,2}$.

\begin{m44}\label{m44}
The manifold $Z_{4,2}\setminus(Z_{4,3}^1\cup Z_{4,3}^2)$ is
unipotently $\chi_{4,2}$-incompatible.
\end{m44}
\begin{proof}
Let $x\in {\fZ_{4,2}}'\setminus({\fZ_{4,3}^{1}}'\cup
{\fZ_{4,3}^{2}}')$ be as in (\ref{x2}). Set
\[
 u(x,t)=\left[\begin{array}{cccccc}
  1&0&x_{11}x_{32}^{-1}t&0&0&0\\
  t&1&0&x_{23}t&0&0\\
  0&0&1&0&0&0\\
  0&0&t&1&0&0\\
  0&0&0&0&1&0\\
  0&0&0&0&t&1\\
  \end{array}\right],
 \]
and
\[
 v(x,t)=\left[\begin{array}{cccccc}
  1&t&0&0&0&0\\
  0&1&0&0&0&0\\
  x_{11}x_{23}^{-1}t&0&1&t&0&0\\
  0&x_{32}t&0&1&0&0\\
  0&0&0&0&1&t\\
  0&0&0&0&0&1\\
  \end{array}\right].
 \]
Then $u(x,t)x=xv(x,t)$ and
\[\chi _{4,2}(u(x,t), v(x,t)^{-\tau})=\psi
_{\K}((x^{-1}_{32}-x^{-1}_{23})t(x_{11}+x_{23}x_{32}))\ne 1 \] for a
suitably chosen $t$. The lemma follows.
\end{proof}

\begin{m45}\label{m45}
The submanifold $Z_{4,3}^1$ is metrically proper in $G_{4,2}$.
\end{m45}
\begin{proof}
Let $x\in {\fZ_{4,3}^1}'$ as in (\ref{x2}). Write $a=x_{23}=x_{32}$
and
\[
  x'=\left[\begin{array}{cccccc}
  0&0&0&0&0&0\\
  0&0&1&0&0&0\\
  0&-1&0&0&0&0\\
   0&0&0&0&0&0\\
   0&0&0&0&0&a\\
   0&0&0&0&-a&0\\
  \end{array}\right].
  \]
Then one checks that
\[
   \oT_x ({Z_{4,3}^1})=\oT_x ({\fZ_{4,3}^1}')+(\Lie S_{4,2})x+x(\Lie S_{4,2})^\tau\subset
   (\K x')^\perp.
\]
Note that
\[
 x'x=a (\diag(0,1,-1,0,1,-1)),
  \]
which spans a nondegenerate $\K$ subspace of $\oT_e(G_{4,2})$. Here
and as usual, $\diag$ represents a diagonal matrix (with the obvious
diagonal entries). The lemma follows, as in the proof of Lemma
\ref{m43}.
\end{proof}

\begin{m46}\label{m46}
The submanifold $Z_{4,3}^2$ is metrically proper in $G_{4,2}$.
\end{m46}
\begin{proof}
Let $x\in {\fZ_{4,3}^2}'$ as in (\ref{x2}). Write
\[
  x'=\left[\begin{array}{cccccc}
  1&0&0&0&0&0\\
  0&0&x_{23}&0&0&0\\
  0&x_{32}&0&0&0&0\\
  0&0&0&-x_{23}x_{32}&0&0\\
  0&0&0&0&0&0\\
  0&0&0&0&0&0\\
    \end{array}\right]
  \]
Then one checks that
\[
   \oT_x ({Z_{4,3}^2})=\oT_x ({\fZ_{4,3}^2}')+(\Lie S_{4,2})x+x(\Lie S_{4,2})^\tau\subset
   (\K x')^\perp,
\]
and
\[
 x'x=x_{23}x_{32}(\diag(-1,1,1,-1,0,0)).
  \]
The lemma follows, as before.
\end{proof}

\vsp We now consider the $H_{4,2}$ stable filtration
\[
  Z_4\supset Z_{4,1}\supset Z_{4,2}\supset Z_{4,3}^1\cup
  Z_{4,3}^2\supset Z_{4,3}^1\supset \emptyset .
\]
In view of the proceeding lemmas, the proof of Proposition \ref{m4}
is complete.

\section{A submanifold $Z_6$ of $\GL_6$}
\label{gl6}

Recall from Section \ref{smallsub} the $P-P^{\tau}$ double coset
$G_R$ indexed by a rank matrix $R$. Set
\begin{equation}
 Z_6=G_R, \quad\textrm{with } R=\left[\begin{array}{cc}
    3&2\\
    2&1\\
  \end{array}\right].
\end{equation}
Clearly $Z_6$ is an $H=S\times S$ stable submanifold of $G$, as with
each $G_R$.

The purpose of this section is to prove the following proposition.
Again it will take a number of steps.

\begin{m6}\label{m6}
As an $H$ submanifold of $G$, $Z_6$ has $\oU_{\chi}\oM$ property.
\end{m6}

Denote by $\fZ_6$ all matrices in $Z_6$ of the form
\begin{equation}\label{z6}
 x=\left[\begin{array}{cccccc}
  *&*&*&*&x_{15}&0\\
  *&*&*&*&x_{25}&0\\
 *&*&0&0&x_{35}&0\\
 *&*&0&0&x_{45}&0\\
 x_{51}&x_{52}&x_{53}&x_{54}&0&0\\
  0&0&0&0&0&1\\
  \end{array}
  \right],
\end{equation}
which forms an $H$ slice of $Z_6$. Write
\[
  \fZ_{6,1}=\{\,\textrm{$x\in \fZ_6$ of the form (\ref{z6}) with
$x_{35}=x_{53}$}\,\},
\]
and
\[
  \fZ_{6,2}=\{\,\textrm{$x\in \fZ_6$ of the form (\ref{z6}) with
$x_{35}=x_{53}=0$}\,\}.
\]
They are both relatively $H$ stable closed submanifolds of $\fZ_6$.
Therefore both
\[
  Z_{6,1}=H \fZ_{6,1}  \quad \textrm{and } Z_{6,2}=H \fZ_{6,2}
\]
are closed submanifolds of $Z_6$.

\begin{m62}
The submanifold $Z_6\setminus Z_{6,1}$ is unipotently
$\chi$-incompatible.
\end{m62}
\begin{proof}
Let $x\in \fZ_6\setminus \fZ_{6,1}$ be as in (\ref{z6}). Write
\[
u(x,t)=\left[\begin{array}{cccccc}
  1&0&0&0&x_{15}t&0\\
  0&1&0&0&x_{25}t&0\\
 0&0&1&0&x_{35}t&0\\
 0&0&0&1&x_{45}t&0\\
 0&0&0&0&1&0\\
  0&0&0&0&0&1\\
  \end{array}
  \right],
\]
and
\[
  v(x,t)=\left[\begin{array}{cccccc}
  1&0&0&0&0&0\\
  0&1&0&0&0&0\\
 0&0&1&0&0&0\\
 0&0&0&1&0&0\\
  tx_{51}&tx_{52}&tx_{53}&tx_{54}&1&0\\
  0&0&0&0&0&1\\
  \end{array}
  \right].
\]
Then $u(x,t)x=xv(x,t)$, and the lemma follows, as before.
\end{proof}

\begin{m63}
The submanifold $Z_{6,1}\setminus Z_{6,2}$ is metrically proper in
$G$.
\end{m63}
\begin{proof}
Every element of $\fZ_{6,1}\setminus \fZ_{6,2}$ is in the same
$H$-orbit as an element of the form
\begin{equation*}\label{xg1}
 x=\left[\begin{array}{cccccc}
  *&*&0&*&0&0\\
  *&*&0&*&0&0\\
 0&0&0&0&a&0\\
 *&*&0&0&0&0\\
 0&0&a&0&0&0\\
  0&0&0&0&0&1\\
  \end{array}
  \right],
\end{equation*}
Fix such an $x$. Then
\[
 \oT_x(Z_{6,1})=\oT_x(\fZ_{6,1})+\Lie(S)x+x\Lie (S^\tau)\subset
 \gl_6(\K)_{35=53}=(\K x')^\perp,
\]
where $x'=e_{35}-e_{53}$. Now $x'x=a(e_{33}-e_{55})$ and we finish
the proof, as before.
\end{proof}

Denote by $\fZ_{6,2}'$ all matrices in $\fZ_{6,2}$ of the form
\begin{equation}\label{z62p}
 x=\left[\begin{array}{cccccc}
  x_{11}&x_{12}&x_{13}&0&0&0\\
  x_{21}&x_{22}&x_{23}&0&0&0\\
  x_{31}&x_{32}&0&0&0&0\\
 0&0&0&0&1&0\\
 0&0&0&1&0&0\\
  0&0&0&0&0&1\\
  \end{array}
  \right],
\end{equation}
which also forms an $H$ slice of $Z_{6,2}$. Set
\[
  \fZ_{6,3}'=\{\,\textrm{$x\in \fZ_{6,2}'$ of the form (\ref{z62p}) with
$x_{13}=x_{31}$}\,\},
\]
and
\[
  \fZ_{6,4}'=\{\,\textrm{$x\in \fZ_{6,2}'$ of the form (\ref{z62p}) with
$x_{13}=x_{31}=0$}\,\}.
\]
They are both relatively $H$ stable closed submanifolds of
$\fZ_{6,2}'$. Therefore both
\[
  Z_{6,3}=H \fZ_{6,3}'  \quad \textrm{and } Z_{6,4}=H \fZ_{6,4}'
\]
are closed submanifolds of $Z_{6,2}$.

\begin{m64}
The manifold $Z_{6,2}\setminus Z_{6,3}$ is unipotently
$\chi$-incompatible.
\end{m64}
\begin{proof}
This is identical to the proof of Lemma \ref{m42} in Section
\ref{sm4}. We omit the details.
\end{proof}

\begin{m65}
The submanifold $Z_{6,3}\setminus Z_{6,4}$ is metrically proper in
$G$.
\end{m65}
\begin{proof}
Every matrix in $\fZ_{6,3}'\setminus \fZ_{6,4}'$ is in the same $H$
orbit as a matrix of the form
\begin{equation*}\label{z62pp}
 x=\left[\begin{array}{cccccc}
 0&0&a&0&0&0\\
  0&x_{22}&0&0&0&0\\
  a&0&0&0&0&0\\
 0&0&0&0&1&0\\
 0&0&0&1&0&0\\
  0&0&0&0&0&1\\
  \end{array}
  \right].
\end{equation*}
Fix such an $x$. Then
\[
 \oT_x(Z_{6,3})=\oT_x(\fZ_{6,3}')+\Lie(S)x+x\Lie (S^\tau) \subset
 \gl_6(\K)_{13=31}=(\K x')^\perp,
\]
where $x'=e_{13}-e_{31}$. Now $x'x=a(e_{11}-e_{33})$ and we finish
the proof, as before.
\end{proof}

Denote by $\fZ_{6,4}''$ all matrices in $\fZ_{6,4}'$ of the form
\begin{equation}\label{x64pp}
 x=\left[\begin{array}{cccccc}
  x_{11}&0&0&0&0&0\\
  0&0&x_{23}&0&0&0\\
  0&x_{32}&0&0&0&0\\
 0&0&0&0&1&0\\
 0&0&0&1&0&0\\
  0&0&0&0&0&1\\
  \end{array}
  \right],
\end{equation}
which also forms an $H$ slice of $Z_{6,4}$. Set
\[
  \fZ_{6,5}''=\{\,\textrm{$x\in \fZ_{6,4}''$ of the form (\ref{x64pp}) with
$x_{23}=x_{32}$}\,\},
\]
which is a relatively $H$ stable closed submanifolds of
$\fZ_{6,4}''$. Therefore
\[
  Z_{6,5}=H \fZ_{6,5}''
\]
is a closed submanifold of $Z_{6,4}$.

\begin{m66}
The manifold $Z_{6,4}\setminus Z_{6,5}$ is unipotently
$\chi$-incompatible.
\end{m66}
\begin{proof}
Let $x\in \fZ_{6,4}''\setminus \fZ_{6,5}''$ be as in (\ref{x64pp}).
Set
\[
u(x,t)=\left[\begin{array}{cccccc}
  1&0&x_{11}x_{32}^{-1}t&0&0&0\\
  t&1&0& 0&x_{23}t&0\\
  0&0&1&0&0&0\\
 0&0&t&1&0&t\\
 0&0&0&0&1&0\\
  0&0&0&0&t&1\\
  \end{array}
  \right],
\]
and
\[
v(x,t)=\left[\begin{array}{cccccc}
  1&t&0&0&0&0\\
   0&1&0&0&0&0\\
  x_{23}^{-1}tx_{11}&0&1& t&0&0\\
   0&0&0&1&0&0\\
 0&tx_{32}&0&0&1&t\\
  0&0&0&t&0&1\\
  \end{array}
  \right].
\]
Then
\[
  u(x,t)x=\left[\begin{array}{cccccc}
   x_{11}  &x_{11}t &0      &0           &0   &0\\
   tx_{11} &0       &x_{23} &x_{23}t     &0   &0\\
   0       &x_{32}  &0      &0           &0   &0\\
   0       &tx_{32} &0      &0           &1   &t\\
   0       &0       &0      &1           &0   &0\\
   0       &0       &0      &t           &0   &1\\
  \end{array}
  \right]=xv(x,t)
\]
and the lemma follows, as before.
\end{proof}

\begin{m67}
The submanifold $Z_{6,5}$ is metrically proper in $G$.
\end{m67}

\begin{proof}
Let $x\in \fZ_{6,5}''$ be as in (\ref{x64pp}), with
$x_{23}=x_{32}=a$. Write
\begin{equation}\label{xa2}
 x'=\left[\begin{array}{cccccc}
  0&0&0&0&0&0\\
  0&0&1&0&0&0\\
  0&-1&0&0&0&0\\
 0&0&0&0&-a&0\\
 0&0&0&a&0&0\\
  0&0&0&0&0&0\\
  \end{array}
  \right].
\end{equation}
Then
\[
  \oT_x(Z_{6,5})=\oT_x(Z_{6,5})+\Lie(S)x+x\Lie (S^\tau)\subset
  (\K x')^\perp,
\]
and $x'x=a(\diag(0,1,-1,-1,1,0))$. The lemma follows, as before.
\end{proof}

\vsp We now consider the $H$ stable filtration
\[
  Z_{6}\supset Z_{6,1}\supset Z_{6,2}\supset Z_{6,3}\supset
  Z_{6,4}\supset Z_{6,5}\supset \emptyset .
\]
In view of the proceeding lemmas, the proof of Proposition \ref{m6}
is complete.

\vsp

\section{The manifold $\GL_2\times \GL_2$}
\label{gl222}

\vsp Set
\[
  M_2=\GL_2(\K)\times\GL_2(\K) =\GL_2(\K)\times\GL_2(\K)\times \{I_2\}\subset G,
\]
which is stable under the subgroup
\[
  H_2=\GL_2(\K)=\{(x, x^{-\tau})\mid x\in \GL_2^{\Delta}(\K)\}\subset H =S\times S.
\]

It will be slightly more convenient to work with the following:
\[
  \tilde{H}_{2}=\{1,\tau\}\ltimes \GL_2(\K),
\]
where the semidirect product is given by the action
\[
  \tau(g)=g^{-\tau}.
\]
Denote by $\tilde{\chi}_{2}$ the character of $\tilde{H}_{2}$ such
that
\[
  \tilde{\chi}_{2}|_{\GL_2(\K)}=1\quad\textrm{and}\quad \tilde{\chi}_{2}(\tau)=-1.
\]

\begin{m2}\label{m2}
Let $\tilde{H}_{2}$ act on $M_2=\GL_2(\K)\times \GL_2(\K)$ by
\[
  g(x,y)=(gxg^{-1},gyg^{-1}),\quad g\in \GL_2(\K),
\]
and
\[
  \tau(x,y)=(x^\tau,y^\tau).
\]
Then
\[
  \con^{-\xi}_{\tilde{\chi}_{2}}(M_2)=0.
\]
\end{m2}
\begin{proof}
Using the same formula, we may extend the action of
$\tilde{H}_{2}$ on $\GL_2(\K)\times \GL_2(\K)$ to the larger space
$\gl_2(\K)\times \gl_2(\K)$. By Lemma \ref{nash}, it suffices to
prove that
\[
   \con^{-\xi}_{\tilde{\chi}_{2}}(\gl_2(\K)\times \gl_2(\K))=0.
\]

Identify $\K$ with the center of $\gl_2(\K)$. We have
\[
   \gl_2(\K)\times \gl_2(\K)=(\s\l_2(\K)\times \s\l_2(\K))\oplus
   (\K\times \K)
\]
as a $\K$ linear representation of $\tilde{H}_{2}$, where
$\tilde{H}_{2}$ acts on $\K\times \K$ trivially. Therefore it
suffices to prove that
\[
   \con^{-\xi}_{\tilde{\chi}_{2}}(\s\l_2(\K)\times \s\l_2(\K))=0.
\]
We view $\sl_2(\K)$ as a three dimensional quadratic space under
the trace form. Under this identification, the action of
$\tilde{H}_{2}$ yields the diagonal action of $\oO(\sl_2(\K))$ on
$\sl_2(\K)\times \sl_2(\K)$, with $\tilde{\chi}_{2}$ corresponding
to the determinant character. So the required vanishing result is
a special case of Proposition \ref{first-occur}.
\end{proof}

\vsp

\section{The manifold $\GL_3\times \GL_1$}\label{sgl3}

Set \[
  M_3=\left\{\left[\begin{array}{cccccc}
                   *&*&*&0&0&0\\
                   *&*&*&0&0&0\\
                   *&*&x_{33}&0&0&0\\
                   0&0&0&x_{44}&0&0\\
                   0&0&0&0&1&0\\
                   0&0&0&0&0&1\\
              \end{array}\right]\in G \mid x_{33}\neq
              x_{44}\right\},
\]
which is stable under the subgroup $H_3$ of $H=S\times S$
consisting of elements of the form
\[
  \left(\left[\begin{array}{cccccc}
                   a&0&*&0&0&0\\
                   0&1&*&0&0&0\\
                   0&0&a&0&0&0\\
                   0&0&0&1&0&0\\
                   0&0&0&0&a&0\\
                   0&0&0&0&0&1\\
              \end{array}\right],
            \left[\begin{array}{cccccc}
                   a^{-1}&0&*&0&0&0\\
                   0&1&*&0&0&0\\
                   0&0&a^{-1}&0&0&0\\
                   0&0&0&1&0&0\\
                   0&0&0&0&a^{-1}&0\\
                   0&0&0&0&0&1\\
              \end{array}\right]
            \right).
\]

Write
\[
  L_3=\left\{\,\left[\begin{array}{ccc} a&0&0\\ 0&1&0\\0&0&a\\
                   \end{array}\right]
 \mid  a\in \K^\times\right\}
\]
and
\[
 N_3=\left\{\,\left[\begin{array}{ccc} 1&0&d\\ 0&1&c\\0&0&1\\
                   \end{array}\right]
 \mid c,d\in\K \right\}.
\]
Then
\[
   H_3=L_3\ltimes (N_3\times N_3),
\]
where the semidirect product is defined by the action
\[
  l(g_1,g_2)=(l g_1 l^{-1},l^{-1}g_2 l).
\]
Define
\[
    \tilde{H_3}=\{1,\tau\}\ltimes H_3,
 \]
with the semidirect product given by the action
\[
  \tau (l,g_1,g_2)=(l^{-1}, g_2, g_1).
\]
Write
\[
   \chi_{N_3} \left(\,\left[\begin{array}{ccc} 1&0&d\\ 0&1&c\\0&0&1\\
                   \end{array}\right]
                   \right)=\psi_{\K}(d),
\]
and let $\tilde{\chi}_3$ be the character of $\tilde{H}_3$ such
that
\[
  \tilde{\chi}_3(l,g_1,g_2)=\chi_{N_3}(g_1)\chi_{N_3}(g_2),\quad
  (l,g_1,g_2)\in H_3,
\]
and
\[
  \tilde{\chi}_3(\tau)=-1.
\]

\begin{m3}\label{m3}
Let $\tilde{H}_3$ act on
\begin{equation} \label{dm3}
  M_3=\{(x,y)\in\GL_3(\K)\times \K^\times \mid y\neq \textrm{the (3,3) entry of $x$}\}
\end{equation}
by
\[
  (l,g_1,g_2)(x,y)=(lg_1 x g_2^\tau l^{-1},y)
\]
and
\[
  \tau (x,y)=(x^\tau,y).
\]
Then
\[
  \con^{-\xi}_{\tilde{\chi}_3}(M_3)=0.
\]
\end{m3}

\begin{proof} First we note that the (3, 3) entry of $x$ is
invariant under $\tilde{H}_3$. Denote by $\GL_3(\K)'$ the set of
matrices in $\GL_3(\K)$ whose (3, 3) entry is not 1. Let
$\tilde{H}_3$ act on $\GL_3(\K)'\times \K^\times$ by the same
formula as its action on $M_3$. Then the map
\[
   \begin{array}{rcl}
      \GL_3(\K)'\times \K^\times& \rightarrow & M_3,\\
          (x,y)&\mapsto& (yx,y)
   \end{array}
\]
is an $\tilde{H}_3$-equivariant Nash diffeomorphism. Therefore
\[
  \con^{-\xi}_{\tilde{\chi}_3}(M_3)\cong \con^{-\xi}_{\tilde{\chi}_3}( \GL_3(\K)'\times
  \K^\times).
\]
As the action of $\tilde{H}_3$ on $\K^\times$ is trivial, it
suffices to show that
\[
  \con^{-\xi}_{\tilde{\chi}_3}( \GL_3(\K)')=0.
\]
This will be implied by Lemma \ref{nash} and Proposition \ref{m32}
below.
\end{proof}

The rest of this section is devoted to the proof of

\begin{m32}\label{m32}
Let $\tilde{H}_3$ act on $\gl_3(\K)$ by
\[
  (l,g_1,g_2)x=lg_1 x g_2^\tau l^{-1}
\]
and
\[
  \tau x=x^\tau.
\]
Then
\[
  \con^{-\xi}_{\tilde{\chi}_3}(\gl_3(\K))=0.
\]
\end{m32}

\vsp

Write
\[
   Z_{3,1}=\left\{\,\left[\begin{array}{ccc} *&*&*\\ *&*&*\\ *&*&0\\
                   \end{array}\right]\in \gl_3(\K)
           \right\}
\]
and
\[
   Z_{3,2}=\left\{\,\left[\begin{array}{ccc} *&*&a\\ *&*&*\\a&*&0\\
                   \end{array}\right]\in \gl_3(\K)
           \right\}.
\]

\begin{nz1}\label{nz1}
One has that $\con^{-\xi}_{\tilde{\chi}_3}(\gl_3(\K)\setminus
Z_{3,1})=0.$
\end{nz1}
\begin{proof}
The Nash submanifold $\gl_2(\K)\times \K^\times$ is an
$\tilde{H}_3$-slice of $\gl_3(\K)\setminus Z_{3,1}$, which is stable
under the subgroup
\[
  \tilde{H}_{3,1}=\{1,\tau\}\ltimes L_3=\{1,\tau\}\ltimes \K^\times.
\]
Denote by $\tilde{\chi}_{3,1}$ the restriction of $\tilde{\chi}_3$
to $\tilde{H}_{3,1}$. Then we have the injective restriction map
\[
  \con^{-\xi}_{\tilde{\chi}_3}(\gl_3(\K)\setminus Z_{3,1})
  \hookrightarrow\con^{-\xi}_{\tilde{\chi}_{3,1}}(\gl_2(\K)\times
  \K^\times).
\]

Let $\tilde{H}_{3,1}$ act on $\K_3=\K\times \K\times \K^\times$
trivially, and act on $\K\times \K$ by
\[
  a(x,y)=(ax,a^{-1}y), \,\,a\in\K^\times\quad \textrm{and}\quad \tau(x,y)=(y,x).
\]
Then
\[
  \gl_2(\K)\times
  \K^\times\cong (\K\times \K)\times \K_3
\]
as Nash manifolds with $\tilde{H}_{3,1}$ actions. It thus suffices
to show that
\begin{equation}\label{vanishkk}
  \con^{-\xi}_{\tilde{\chi}_{3,1}}(\K\times \K)=0.
\end{equation}
We view $\K\times \K$ as a split two dimensional quadratic space
so that both $\K\times \{0\}$ and $\{0\}\times \K$ are isotropic.
Then $\tilde{H}_{3,1}$ is identified with the orthogonal group
$\oO(\K\times \K)$, with $\tilde{\chi}_{3,1}$ corresponding to the
determinant character. So (\ref{vanishkk}) is a special case of
Proposition \ref{first-occur}.
\end{proof}

\begin{gl3z1}\label{gl3z1}
The $H_3$ stable manifold $Z_{3,1}\setminus Z_{3,2}$ is unipotently
$\chi_3$-incompatible, where $\chi _3=\tilde{\chi}_3|_{H_3}$.
\end{gl3z1}
\begin{proof}
For
\[
  x=\left[\begin{array}{ccc}
    x_{11}&x_{12}&x_{13}\\
     x_{21}&x_{22}&x_{23}\\
      x_{31}&x_{32}&0\\
  \end{array}\right]\in Z_{3,1}\setminus Z_{3,2} \quad
  \textrm{and}\quad t\in \K,
\]
write
\[
  u(x,t)=\left[\begin{array}{ccc}
    1&0&x_{13}t\\
     0&1&x_{23}t\\
      0&0&1\\
  \end{array}\right]\quad \textrm{and}\quad
  v(x,t)=
  \left[\begin{array}{ccc}
    1&0&0\\
     0&1&0\\
      tx_{31}&tx_{32}&1\\
  \end{array}\right].
\]
Then
\[
  u(x,t)x=xv(t,x),
\]
and the lemma follows, as in the proof of Lemma \ref{m42}.
\end{proof}

Lemma \ref{nz1}, Lemma \ref{gl3z1} and Lemma \ref{localization3}
now imply the following

\begin{gl3z0}\label{gl3z0}
Every generalized function in
$\con^{-\xi}_{\tilde{\chi}_3}(\gl_3(\K))$ is supported in
$Z_{3,2}$.
\end{gl3z0}

\vsp We shall employ Fourier transform to finish the proof of
Proposition \ref{m32}. In general, let $E$ be a finite dimensional
real vector space, equipped with a nondegenerate symmetric
bilinear form $\la\,,\,\ra_E$. The Fourier transform is a
topological linear isomorphism
\[
  \widehat{\phantom{f}}:\mathcal{S}(E)\rightarrow
\mathcal{S}(E)
\]
of the space of Schwartz functions, given by
\[
  \widehat{f}(x)=\int_E f(y)e^{-2\pi \sqrt{-1}\,\la x,y\ra_E}\,dy,
\]
where $dy$ is the Lebesgue measure on $E$, normalized such that
the volume of the cube
\[
  \{t_1v_1+t_2v_2+\cdots +t_rv_r\mid 0\leq t_1,t_2,\cdots, t_r\leq 1\}
\]
is $1$, for any orthogonal basis $v_1,v_2,\cdots,v_r$ of $E$ such
that $\la v_i,v_i\ra_E=\pm 1$, $i=1,2,\cdots, r$. The Fourier
transform extends continuously to a topological linear isomorphism
\[
  \widehat{\phantom{f}}:\con^{-\xi}(E)\rightarrow
\con^{-\xi}(E),
\]
which is still called the Fourier transform.

The following lemma is a form of uncertainty principle.

\begin{fourier}\label{fourier}
Let $f\in \con^{-\xi}(E)$. If both $f$ and $\widehat{f}$ are
supported in a common nondegenerate proper subspace of $E$, then
$f=0$.
\end{fourier}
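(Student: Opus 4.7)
The plan is to carry out the standard uncertainty-principle argument: decompose $E$ orthogonally so that the Fourier transform factors, then use the fact that a tempered generalized function supported at the origin of a Euclidean space has a polynomial Fourier transform, and that a nonzero polynomial cannot be supported at a point of a positive-dimensional space. Since the common supporting subspace $F$ is nondegenerate and proper, I would begin by writing $E = F \oplus F^\perp$ orthogonally; then $F^\perp$ has positive dimension and $\la\,,\,\ra_E$ restricts nondegenerately to each summand. With an orthogonal basis adapted to this decomposition, the Lebesgue measure on $E$ factors as the product of those on $F$ and $F^\perp$, and consequently the Fourier transform on $E$ becomes the tensor product of the Fourier transforms on $F$ and on $F^\perp$.

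Next I would apply the structure theorem for tempered generalized functions supported on a linear subspace to $f$, which is supported on $F \times \{0\}$: this yields a finite expansion
\[
  f = \sum_{\alpha} g_\alpha \otimes \partial^\alpha \delta_0,
\]
where the sum runs over a finite set of multi-indices in the coordinates of $F^\perp$, each $g_\alpha \in \con^{-\xi}(F)$, and $\delta_0$ is the Dirac delta at the origin of $F^\perp$. Applying the Fourier transform factor by factor then gives
\[
  \widehat{f}(x,y) = \sum_{\alpha} \widehat{g_\alpha}(x)\, p_\alpha(y),
\]
in which each $p_\alpha$ is, up to a nonzero scalar, the monomial $y^\alpha$ on $F^\perp$. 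Thus $\widehat{f}$ is polynomial in $y$ with tempered-generalized-function coefficients in $x$.

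The last step is to exploit the second support hypothesis. Since $\widehat{f}$ is supported in $F \times \{0\}$, pairing it against test functions of the form $\phi(x)\chi(y)$ with $\chi \in \con_0^\infty(F^\perp \setminus \{0\})$ yields
\[
  \sum_\alpha \la \widehat{g_\alpha}, \phi \ra \int_{F^\perp} \chi(y)\, p_\alpha(y)\, dy = 0.
\]
Varying $\chi$ over all such test functions shows that the polynomial $y \mapsto \sum_\alpha \la \widehat{g_\alpha}, \phi\ra\, p_\alpha(y)$ vanishes on the open set $F^\perp \setminus \{0\}$; since $\dim F^\perp > 0$, a polynomial vanishing on this open set is identically zero, so each coefficient $\la \widehat{g_\alpha}, \phi\ra$ is zero. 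As this holds for every test function $\phi$, every $\widehat{g_\alpha}$ vanishes, hence every $g_\alpha$, and therefore $f = 0$. The sole place where the properness of $F$ enters is through $\dim F^\perp > 0$, which prevents a nonzero polynomial on $F^\perp$ from being supported at the origin; this is the conceptual hinge of the argument, and the only subtlety I expect is merely the careful bookkeeping with the tensor-product decomposition of tempered generalized functions, which is standard.
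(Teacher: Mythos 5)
Your proof is correct, but it takes a genuinely different route from the paper's. You invoke the structure theorem for a tempered generalized function supported on a linear subspace, writing $f=\sum_\alpha g_\alpha\otimes\partial^\alpha\delta_0$ with respect to the orthogonal splitting $E=F\oplus F^\perp$, push this through the factored Fourier transform, and then argue by linear independence of monomials on $F^\perp$ (using $\dim F^\perp>0$) that each coefficient $\widehat{g_\alpha}$ vanishes. The paper instead avoids the full transverse expansion: it picks a single nondegenerate vector $v\in F^\perp$, observes that $\widehat f$ is of finite order and supported in the hyperplane $v^\perp$, hence $(v^*)^k\widehat f=0$ for some $k$, takes Fourier transform to get $(\partial/\partial v)^kf=0$, and concludes $f=0$ from Lemma \ref{tvanishing2}, since $(\partial/\partial v)^k$ is transversal to $F$. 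Both arguments hinge on the same two facts — temperedness forces a finite transverse order, and properness plus nondegeneracy of $F$ supplies a transversal nondegenerate direction — but the paper's version stays within the transversality machinery already set up in Section \ref{generality} and needs only the one-variable statement ``finite order on $\{v^*=0\}$ implies annihilated by $(v^*)^k$,'' whereas yours is a self-contained computation requiring the stronger structure theorem. Your approach has the small advantage of being elementary and independent of Lemma \ref{tvanishing2}; the paper's is shorter and reuses a lemma it needs anyway.
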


\begin{proof}
Let $v\in E$ be a nondegenerate vector such that both $f$ and
$\widehat{f}$ are supported in its perpendicular space. Denote by
$v^*$ the function
\[
  E\rightarrow \R,\quad u\mapsto \la u,v\ra_E.
\]
Due to tempered-ness, $\widehat{f}$ has a finite order and
therefore
\[
   (v^*)^k \,\widehat{f} = 0 \quad \textrm{for some }k\geq 1.
\]
Consequently $(\partial/\partial v)^k \,f=0$, and we finish the
proof by applying Lemma \ref{tvanishing2}.
\end{proof}

We continue with the proof of Proposition \ref{m32}. Let $\gl_3(\K)$
be equipped with the real trace from as in the Introduction and
define the Fourier transform accordingly. Given $f\in
\con^{-\xi}_{\tilde{\chi}_3}(\gl_3(\K))$, it is easy to check that
its Fourier transform $\widehat{f}\in \con^{-\xi}(\gl_3(\K))$
satisfies the followings:
\[
  \left\{
    \begin{array}{l}
           \textrm{(a) } \widehat{f}(lxl^{-1})=\widehat{f}(x), \quad l\in L_3,\smallskip\\
            \textrm{(b) } \widehat{f}(g_1^\tau x g_2)=\chi_{N_3}(g_1)^{-1} \chi_{N_3}(g_2)^{-1} \widehat{f}(x),
             \quad g_1, g_2\in N_3,\,\,\textrm{and,} \smallskip\\
             \textrm{(c) } \widehat{f}(x^\tau)=-\widehat{f}(x).
    \end{array}
  \right.
\]
Then as in Lemma \ref{gl3z0}, we conclude that $\widehat{f}$ is
supported in
\[
  Z'_{3,2}=\left\{\,\left[\begin{array}{ccc} 0&*&a\\ *&*&*\\a&*&*\\
                   \end{array}\right]\in \gl_3(\K)
           \right\}.
\]
Therefore both $f$ and $\widehat f$ are supported in the proper
nondegenerate subspace
\[
Z_{3,2}+ Z'_{3,2}=
\left\{\,\left[\begin{array}{ccc} *&*&a\\ *&*&*\\a&*&*\\
                   \end{array}\right]\in \gl_3(\K)
           \right\}.
\]
Lemma \ref{fourier} then implies that $f=0$. The proof of
Proposition \ref{m32} is now complete.

\vsp \noindent {\bf Remark}: We may view the Fourier transform
argument of this section as a variation of the metrical properness
argument of Sections \ref{sm4} and \ref{gl6}. In view of Lemma
\ref{gl3z1} on unipotent $\chi_3$-incompatibility, we have in some
sense used $\oU_\chi \oM$ property to reduce Proposition \ref{m3} to
the vanishing of (\ref{vanishkk}). The latter is closely related to
the multiplicity one property of the pair $(\GL_2(\K),\GL_1(\K))$.

\section{Proof of Theorem \ref{thm:mainB}}
\label{theoremB}

We will first examine the case where the quaternion algebra $\BD$ is
split, namely $G=\GL_6(\K)$. We start with the following

\begin{um-transitivity}\label{um-transitivity}
Recall the notations of Section \ref{sm4}.
\begin{itemize}
\item[(a)]
If $\fZ$ is a unipotently $\chi_{4,2}$-incompatible $H_{4,2}$ stable
submanifold of $G_{4,2}$, then $Z=H\fZ$ is a unipotently
$\chi$-incompatible submanifold of $G$. \item[(b)] If $\fZ$ is a
metrically proper $H_{4,2}$ stable submanifold of $G_{4,2}$, then
$Z=H\fZ$ is a metrically proper submanifold of $G$.
\end{itemize}
\end{um-transitivity}
\begin{proof} Part (a) is clear. For Part (b), we note \[
  Z=H \fZ=U_{4,2} \fZ \,U_{4,2}^\tau,
\]
where
\[
  U_{4,2}=\left\{\left[\begin{array}{ccc}
    I_2&0&d\\
    0&I_2&c\\
    0&0&I_2\\
     \end{array}\right]\mid c,d\in \gl_2(\K)\right \}.
\]
By invariance of the metric, we only need to show that $Z$ is
metrically proper at every point $z\in \fZ$, i.e., the tangent space
$\oT_z(Z)$ is contained in a nondegenerate proper subspace of
$\oT_z(G)$.

First we assume that $z$ is the identity matrix $e$. Then
\[
  \oT_e(Z)=\oT_e(\fZ)+(\Lie(U_{4,2})+\Lie(U_{4,2}^\tau))
\]
is metrically proper since
\[
  \oT_e(\fZ) \textrm{ is metrically proper in }
  \oT_e(\GL_4(\K)\times
\GL_2(\K)),
  \]
and
\[
  \oT_e(G)=\oT_e(\GL_4(\K)\times
\GL_2(\K)) \oplus (\Lie(U_{4,2})+\Lie(U_{4,2}^\tau))
\]
is an orthogonal decomposition.

Now let $z\in\fZ$. Note that
\[
  z^{-1}Z=U_{4,2}(z^{-1}\fZ) U_{4,2}^\tau,
\]
and
\[
  z^{-1}\fZ\textrm{ is metrically proper in }\GL_4(\K)\times
\GL_2(\K).
\]
Therefore the above argument implies that $z^{-1}Z$ is metrically
proper at $e$. Using the left multiplication by $z$
\[
  l_z: (G, z^{-1}Z, e)\rightarrow (G, Z,z),
\]
we conclude that $Z$ is metrically proper at $z$.
\end{proof}

\vsp

Recall the open submanifold $G'$ of $G$ from Section \ref{smallsub}.
Set \[
  G_{4,2}'=(\GL_4\times \GL_2)\cap G',\]
which is stable under $H_{4,2}=S_{4,2}\times S_{4,2}$. Define
$G_{2,4}'$ and $H_{2,4}$ similarly.

Recall also the submanifolds $M_2$ and $M_3$, from Sections
\ref{gl222} and \ref{sgl3}. Also define the following symmetric
counterpart of $M_3$:
\[
  \check{M_3}=\left\{\left[\begin{array}{cccccc}
                    1&0&0&0&0&0\\
                   0&1&0&0&0&0\\
                   0&0&*&*&*&0\\
                   0&0&*&*&*&0\\
                   0&0&*&*&y_{33}&0\\
                   0&0&0&0&0&y_{44}\\
              \end{array}\right]\in G\mid y_{33}\neq y_{44}\right\}.
\]
Note that
\[M_2 \subset
G'_{4,2}\cap G'_{2,4},\ \ M_3 \subset G'_{4,2}, \ \ \check{M_3}
\subset G'_{2,4}.
\]

We have
\[G_{4,2}'\setminus (H_{4,2}M_2\cup H_{4,2}M_3)= Z_4,\]
\[G_{2,4}'\setminus (H_{2,4}M_2\cup H_{2,4}\check{M_3})= W_4,\]
where $Z_4$ is given in (\ref{z4}), and $W_4$ is given similarly by
\[
   W_4=\left\{\,\left[\begin{array}{ccc}y&0&0\\
    0& a_{11}&a_{12}\\
    0&a_{21}&a_{22}\\
         \end{array}\right]\in G_{2,4} \mid
   \textrm{$y^{-1}a_{22}$ is nilpotent and nonzero}\right\}.
\]

\vsp Let
\begin{equation}
\label{G''} G''=HM_2\cup HM_3\cup H\check{M_3}\subset G'.
\end{equation}

\begin{transitivity}
\label{transitivity}
As an $H$ manifold, $G'\setminus G''$ has $\oU_{\chi}\oM$
property. Consequently if $f\in \con^{-\infty}_{\chi}(G')$ is an
eigenvector of $\Delta$, and $f$ vanishes on $G''$, then $f=0$.
\end{transitivity}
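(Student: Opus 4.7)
The plan is to decompose $G'\setminus G''$ into three $H$-stable pieces and show each carries the $\oU_{\chi}\oM$ property, assembling them into a filtration to which Lemma \ref{UchiM} applies. The three pieces I would use are: (i) the double coset $Z_6$ of rank matrix $\begin{pmatrix} 3&2\\ 2&1\end{pmatrix}$; (ii) the $H$-saturation $HZ_4$, where $Z_4\subset G_{4,2}$ is the submanifold of Section \ref{sm4}; and (iii) the $H$-saturation $HW_4$, where $W_4\subset G_{2,4}$ is the symmetric counterpart defined just before the statement. Using the identities $G'_{4,2}\setminus(H_{4,2}M_2\cup H_{4,2}M_3)=Z_4$ and $G'_{2,4}\setminus(H_{2,4}M_2\cup H_{2,4}\check{M_3})=W_4$, and the fact that each of the four double cosets in $G'$ admits a representative in either $G_{4,2}$ or $G_{2,4}$ (and the $(3,2;2,1)$ coset in both, from which $Z_6$ emerges), these three pieces will cover $G'\setminus G''$.

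For the $\oU_{\chi}\oM$ property of each piece: $Z_6$ is handled directly by Proposition \ref{m6}. For $HZ_4$, I apply Lemma \ref{um-transitivity}: since Proposition \ref{m4} furnishes an $H_{4,2}$-stable filtration of $Z_4$ whose strata are either unipotently $\chi_{4,2}$-incompatible or metrically proper in $G_{4,2}$, the saturations under $H$ inherit the corresponding properties in $G$, giving an $H$-stable filtration of $HZ_4$ witnessing the $\oU_{\chi}\oM$ property. Symmetrically, the analog of Proposition \ref{m4} for $W_4$ (obtained by transposing all the arguments of Section \ref{sm4}) handles $HW_4$.

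Next, I must splice the three filtrations of $Z_6$, $HZ_4$, $HW_4$ into a single filtration of $G'\setminus G''$. The natural order is to put $Z_6$ as the deepest closed stratum (since it is contained in the closure of the other pieces via rank-matrix degenerations), with $HZ_4\cup HW_4$ sitting above it; verifying that each union $Z_6\cup(\text{closed stratum in }HZ_4)\cup(\text{closed stratum in }HW_4)$ is actually closed in the successor, and that each intermediate difference remains a submanifold with the required property, is where one must be careful. Once this is done, Lemma \ref{UchiM} applies.

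For the final consequence, the Casimir $\Delta$ with respect to $\langle\,,\,\rangle_\R$ is manifestly of Laplacian type, and for any eigenvalue $c$ the shifted operator $\Delta-c$ remains Laplacian type (the zeroth-order shift does not affect the principal symbol). Since $f$ vanishes on $G''$, its restriction to $G'$ has support in the closed set $G'\setminus G''$ and is annihilated by $\Delta-c$, so Lemma \ref{UchiM} yields $f=0$. The main obstacle I anticipate is the bookkeeping in Step 3: checking that the strata of $HZ_4$ and $HW_4$ intersect the closure of $Z_6$ cleanly enough that a single descending filtration by closed $H$-stable subsets can be assembled without spoiling the ``unipotently $\chi$-incompatible or metrically proper'' dichotomy on each difference.
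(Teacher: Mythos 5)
Your proposal is correct and coincides with the paper's own argument: the same decomposition $G'\setminus G'' = Z_6\sqcup HZ_4\sqcup HW_4$, the same appeals to Proposition \ref{m6}, Proposition \ref{m4}, and Lemma \ref{um-transitivity}, and the same final application of Lemma \ref{UchiM} (with the standard observation that $\Delta - c$ is still of Laplacian type). The ``bookkeeping'' you flag as the main obstacle is dispatched by exactly three facts that the paper records as ``easy to check'': $Z_6$ is closed in $G'\setminus G''$, and each of $HZ_4$, $HW_4$ is closed in $HZ_4\sqcup HW_4$; since $Z_6$ is closed, $HZ_4\sqcup HW_4$ is open, and since each piece is closed in that open set, each is also open there, so the filtrations of the three pieces splice into a filtration of $G'\setminus G''$ by $H$-stable closed subsets with the required dichotomy on each successive difference. (In particular, the direction of your worry is slightly off: the relevant containment is that $Z_6$ lies in the closure of the other two pieces, not that their strata degenerate onto $Z_6$.)
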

\begin{proof}
It is easy to check that
\begin{itemize}
\item $G'\setminus G'' =Z_6\bigsqcup HZ_4\bigsqcup HW_4$;
\item $Z_6$ is closed in $G'\setminus G''$;
\item Both $HZ_4$ and $HW_4$ are closed in $HZ_4\bigsqcup HW_4$.
\end{itemize}
By Proposition \ref{m6}, the submanifold $Z_6$ has $\oU_{\chi}\oM$
property. By Proposition \ref{m4} and Lemma \ref{um-transitivity},
the submanifold $H Z_4$ has $\oU_{\chi}\oM$ property. Similarly,
$HW_4$ also has $\oU_{\chi}\oM$ property. Therefore the $H$ stable
closed subset $Z_6\bigsqcup HZ_4\bigsqcup HW_4$ of $G'$ has
$\oU_{\chi}\oM$ property. The assertion follows.
\end{proof}

\vsp Now set
\[
  \tilde{H}=\{1,\tau\}\ltimes H=\{1,\tau\}\ltimes (S\times S),
\]
where the semidirect product is defined by the action
\[
  \tau (g_1,g_2)=(g_2, g_1), \quad g_1,g_2\in S.
\]
Extend $\chi$ to a character $\tilde{\chi}$ of $\tilde{H}$ by
requiring
\[
  \cover{\chi}(\tau)=-1,
\]
and extend the action on $G$ of $H$ to $\tilde{H}$ by requiring
\[
  \tau x=x^\tau.
 \]

\begin{symmetry}\label{symmetry}
One has that $\con^{-\xi}_{\tilde{\chi}}(G'')=0$.
\end{symmetry}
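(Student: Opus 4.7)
The plan is to decompose $G''$ into its three constituent $\tilde{H}$-orbits $HM_2$, $HM_3$, $H\check{M_3}$, and on each to apply the slice-restriction mechanism of Lemma \ref{restriction}, reducing everything to the vanishing statements in Propositions \ref{m2} and \ref{m3}. First I would verify that each of $M_2$, $M_3$, $\check{M_3}$ is $\tilde H$-stable; the $H$-stability is built into the definition, and the $\tau$-stability is immediate because the defining equations of each $M_i$ are symmetric under matrix transposition.

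Next I would set up an $\tilde H$-stable filtration of $G''$, say
\[
G'' \;\supset\; HM_3 \cup H\check{M_3} \;\supset\; \emptyset,
\]
after checking that $HM_3 \cup H\check{M_3}$ is closed in $G''$; the complement is then an $\tilde H$-open subset lying entirely in $HM_2$. For the open stratum one restricts to a suitable Nash open piece of $M_2$ (using the tempered version of Lemma \ref{restriction}) to obtain an injection into $\con^{-\xi}_{\tilde{\chi}_2}(M_2)$, which vanishes by Proposition \ref{m2}. For the closed stratum, one further filters by $HM_3 \supset H(M_3\cap \check{M_3})$ and the symmetric piece on the $\check{M_3}$-side, reducing to $\con^{-\xi}_{\tilde{\chi}_3}(M_3)=0$ (Proposition \ref{m3}) and its mirror image (proved by an identical argument on $\check{M_3}$, or by transporting via $\tau$). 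Combining the vanishings on successive strata by the standard sheaf-theoretic filtration of L.~Schwartz (the same device used in the proof of Lemma \ref{localization3}) yields $\con^{-\xi}_{\tilde{\chi}}(G'')=0$. The non-split case, as foreshadowed by the authors, is handled by the same decomposition with the obvious modifications.

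The hard part will be the bookkeeping of characters and stabilizers: one must verify that the stabilizer of $M_2$ in $\tilde H$ is, modulo a unipotent subgroup that acts trivially on $M_2$ and carries a trivial character, exactly $\tilde{H}_2 = \{1,\tau\}\ltimes \GL_2(\K)$, and that the restriction of $\tilde\chi$ to it corresponds precisely to the character $\tilde\chi_2$ of Proposition \ref{m2} (in particular the sign $\tilde\chi(\tau)=-1$ survives, and the $\psi_\K$-factors integrate to the trivial character on the diagonal $\GL_2$). The analogous identification must be carried out for $M_3$, matching $\tilde\chi$ with $\tilde\chi_3$ of Proposition \ref{m3}. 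A secondary obstacle is ensuring that the various overlaps between $HM_2$, $HM_3$, and $H\check{M_3}$ are absorbed correctly into the filtration so that each successive difference is an honest $\tilde H$-stable Nash submanifold admitting a Nash slice, so that temperedness is preserved throughout the restriction-and-reduction process.
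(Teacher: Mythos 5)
Your strategy is the right one at the highest level: decompose $G''$ into the three pieces $HM_2$, $HM_3$, $H\check{M_3}$, restrict to slices, and invoke Propositions \ref{m2} and \ref{m3}. This is exactly what the paper does. However, your proposed mechanism for combining the three local vanishings contains a genuine gap.

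You propose the filtration $G'' \supset HM_3 \cup H\check{M_3} \supset \emptyset$ and assert that $HM_3 \cup H\check{M_3}$ is closed in $G''$, and that its open complement lies in $HM_2$. Neither needs to hold, and the first almost certainly fails: each of $HM_2$, $HM_3$, $H\check{M_3}$ is an $H$-stable Nash \emph{open} subset of $G''$, and they overlap (e.g.\ generic block-diagonal elements belong to more than one of them); so $HM_3\cup H\check{M_3}$ is open in $G''$ but not closed, and the filtration does not exist. The invocation of the L.~Schwartz filtration is also misplaced: that device governs generalized functions \emph{supported} in a closed submanifold (it is what underlies Lemma \ref{localization3}), whereas here what is needed is the much simpler sheaf axiom for generalized functions on an \emph{open cover}. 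The paper's actual argument is exactly that: establish $\con^{-\xi}_{\tilde{\chi}}(HM_2)=0$, $\con^{-\xi}_{\tilde{\chi}}(HM_3)=0$, $\con^{-\xi}_{\tilde{\chi}}(H\check{M_3})=0$ via the injective restriction map of Lemma \ref{restriction} followed by Propositions \ref{m2} and \ref{m3}; since $G''=HM_2\cup HM_3\cup H\check{M_3}$ is a Nash open cover, any $f\in\con^{-\xi}_{\tilde{\chi}}(G'')$ restricts to zero on each piece and is therefore zero. No stratification, closedness, or transverse-order filtration enters. Finally, your closing remark about the non-split case does not belong here: Proposition \ref{symmetry} is formulated entirely within the split setting, and the non-split case is handled by a separate (and simpler) argument at the end of Section \ref{theoremB} that bypasses $G''$ altogether.
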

\begin{proof}
By using the restriction map, Proposition \ref{m2} implies that
\[
   \con^{-\xi}_{\tilde{\chi}}(H M_2)=0.
\]
Similarly, Proposition \ref{m3} imply that
\[
   \con^{-\xi}_{\tilde{\chi}}(H M_3)=0,
\]
and likewise,
\[
   \con^{-\xi}_{\tilde{\chi}}(H \check{M_3})=0.
\]
The proposition follows from the above three vanishing results.
\end{proof}

\vsp We are now ready to prove Theorem \ref{thm:mainB} for the split
case. Let $f$ be as in the theorem. Write
\[
  f^\tau(x)=f(x^\tau).
\]
Then $f^\tau$ still satisfies (\ref{fsx}), which implies that
\[
  f-f^\tau\in \con^{-\xi}_{\tilde{\chi}}(G).
\]

From Proposition \ref{symmetry}, we know that $f-f^{\tau}=0$ on
$G''$. Note that $\tau$ commutes with the differential operator
$\Delta$ on $G$. So $f^\tau$ is an eigenvector of $\Delta$, with the
same eigenvalue as that of $f$. Therefore $f-f^\tau$ is again an
eigenvector of $\Delta$. Proposition \ref{transitivity} implies that
$f-f^{\tau}=0$ on $G'$. By Proposition \ref{mp5}, we finally
conclude that
\[
  f-f^\tau=0.
\]

\vsp In the rest of the section, we sketch the proof of Theorem
\ref{thm:mainB} for the case $\BD =\H$ (the real quaternion division
algebra), which is much simpler than the split case of $\GL_6(\K)$.
As in the split case, define a parabolic subgroup $P_\H$ containing
$S_\H$ and the rank matrix $\oR(x)$ (for $x\in G_\H$) in the obvious
way. Then $\oR(x)$ takes the following 6 possible values:
\[
  \left[\begin{array}{cc}
    2&1\\
    1&1\\
  \end{array}\right]=R_{\open},
   \left[\begin{array}{cc}
    2&1\\
    1&0\\
  \end{array}\right],
\]
\[
  \left[\begin{array}{cc}
    1&1\\
    1&1\\
  \end{array}\right],
  \left[\begin{array}{cc}
    1&1\\
    0&0\\
  \end{array}\right],
  \left[\begin{array}{cc}
    1&0\\
    1&0\\
  \end{array}\right],
   \left[\begin{array}{cc}
    1&0\\
    0&0\\
  \end{array}\right],
\]
which gives rise to 6 $P$-$P^\tau$ double cosets $\{G_{\H,R}\}$.

Let $f$ be as in the theorem. If we replace $\GL_2(\K)$ by
$\H^\times$, the analog of Proposition \ref{m2} still holds. This
will imply that $f-f^{\tau}$ vanishes on $G_{\H,R_{\open}}$. As in
the split case, we define a left invariant vector field $X_{\oleft}$
on $G_\H$ using $x_{\oleft}=\left[\begin{smallmatrix}
                  0&1&0\\
                  0&0&1\\
                  0&0&0\end{smallmatrix}\right]\in \gl_3(\H)$.
 Then as in Section \ref{smallsub}, one
checks that $X_{\oleft}$ is transversal to every double coset
$G_{\H,R}$ for $R\neq R_{\open}$. We conclude as in the split case
that $f-f^{\tau}=0$.

\vsp

\noindent {\bf Remarks}:

\begin{itemize}
\item [(a)] Theorem \ref{thm:mainB} in fact holds without the tempered-ness
condition on $f$. But we shall not prove or exploit this fact.
\item [(b)] We also expect Theorem \ref{thm:mainB} to hold without the
assumption that $f$ is an eigenvector of $\Delta_\BD$.
\end{itemize}

\section{Proof of Theorem \ref{thm:mainA}}
\label{theoremA}

The argument of this section is standard, and it works for a more
general real reductive group $G$.

By a representation of $G$, we mean a continuous linear action of
$G$ on a complete, locally convex, Hausdorff complex topological
vector space. We say that a representation $V$ of $G$ is in the class $\CFH$ if it is Fr\'{e}chet, smooth,
of moderate growth, admissible and $\oZ(\gC)$ finite. Here and as
usual, $\oZ(\gC)$ is the center of the universal enveloping algebra
$\oU(\gC)$ of the complexification $\gC$ of $\g$. The reader may
consult \cite{Cass,W2} for more details about representations in the class $\CFH$.

Let $V_1$ and $V_2$ be two representations of
$G$ in the class $\CFH$. We say that they are contragredient to each other if there
exists a nondegenerate continuous $G$ invariant bilinear form
\begin{equation*}\label{pv1v2}
   \la\,,\,\ra: V_1\times V_2\rightarrow \C.
\end{equation*}
If $V_1$ and $V_2$ are  contragredient to each other, then $V_1$ is
irreducible if and only if $V_2$ is.

Let $S_1$ and $S_2$ be two closed subgroups of $G$, with continuous
characters (not necessarily unitary)
\[
  \chi_{S_i}: S_i\rightarrow \C^\times,\quad i=1,2.
\]
Let $\tau$ be a continuous anti-automorphism of $G$ (not necessarily
an anti-involution).

The following is a generalization of the usual Gelfand-Kazhdan
criterion. See \cite{SZ08} for a detailed proof. Recall that
$\operatorname{U}(\gC)^G$ is identified with the space of
bi-invariant differential operators on $G$, as usual.

\begin{gelfand}\label{gelfand}
Assume that for every $f\in \con^{-\xi}(G)$ which is an eigenvector
of $\operatorname{U}(\gC)^G$, the conditions
\[
  f(sx)=\chi_{S_1}(s)f(x), \quad s\in S_1,
\]
and
\[
  f(xs)=\chi_{S_2}(s)^{-1}f(x), \quad s\in S_2
\]
imply that
\[
   f(x^\tau)=f(x).
\]
Then for any two irreducible representations
$V_1$ and $V_2$ of $G$ in the class $\CFH$ which are contragredient to each other, one
has that
\begin{equation*}\label{dhom}
  \dim \Hom_{S_1}(V_1, \C_{\chi_{S_1}}) \, \dim \Hom_{S_2}(V_2,\C_{\chi_{S_2}})\leq 1.
\end{equation*}
\end{gelfand}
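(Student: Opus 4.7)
The plan is to apply a standard Gelfand--Kazhdan bootstrap. Assume both Hom spaces are nonzero; the goal is to show that each is at most one-dimensional. I will produce, from any pair $(\ell_1,\ell_2)$ with nonzero $\ell_i\in\Hom_{S_i}(V_i,\C_{\chi_{S_i}})$, a tempered generalized function $T_{\ell_1,\ell_2}\in\con^{-\xi}(G)$ falling under the hypothesis of the criterion, and then deduce the dimension bound from the resulting $\tau$-symmetry.

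The construction (Step 1) uses the generalized matrix coefficient map. Since $V_1$ and $V_2$ lie in class $\CFH$ and are contragredient, the matrix coefficients $c_{v_1,v_2}(g)=\langle gv_1,v_2\rangle$ are smooth of moderate growth, and the resulting map $V_1\otimes V_2\to\con^{-\xi}(G)$ is $G\times G$-equivariant and takes values in a single eigenspace of $\oU(\gC)^G$ (thanks to $\oZ(\gC)$-finiteness of $V_1$). Coupling this map with $\ell_1$ and $\ell_2$, by a suitable averaging/duality using continuity of $\ell_1,\ell_2$ and the irreducibility of $V_1,V_2$, produces the desired $T_{\ell_1,\ell_2}$ satisfying
\[
T_{\ell_1,\ell_2}(sx)=\chi_{S_1}(s)\,T_{\ell_1,\ell_2}(x),\qquad T_{\ell_1,\ell_2}(xs)=\chi_{S_2}(s)^{-1}\,T_{\ell_1,\ell_2}(x),
\]
and being a $\oU(\gC)^G$-eigenvector. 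The central nontriviality assertion is that the bilinear assignment $(\ell_1,\ell_2)\mapsto T_{\ell_1,\ell_2}$ is injective in each variable when the other is fixed nonzero.

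For Step 2, the hypothesis of the proposition forces $T_{\ell_1,\ell_2}(x^\tau)=T_{\ell_1,\ell_2}(x)$ for every such pair. To turn this into a constraint on $(\ell_1,\ell_2)$, I would introduce the $\tau$-twist $V_1^\tau$: the representation on the same underlying space with action $g\cdot v:=\pi_1(g^\tau)^{-1}v$. It is again irreducible and in class $\CFH$, with the same infinitesimal character as $V_2$. The $\tau$-symmetry of every $T_{\ell_1,\ell_2}$, combined with the equivariance of the matrix coefficient map, yields a $G$-intertwiner between $V_1^\tau$ and the contragredient of $V_2$, compatible with $\ell_1$ and $\ell_2$. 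By Schur's lemma this intertwiner is unique up to scalar; combined with the injectivity from Step 1, this pins down the pair $(\ell_1,\ell_2)$ up to scalar, yielding $\dim\Hom_{S_1}(V_1,\C_{\chi_{S_1}})\cdot\dim\Hom_{S_2}(V_2,\C_{\chi_{S_2}})\le 1$.

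The main obstacle is Step 1, the construction of $T_{\ell_1,\ell_2}$ as a genuinely tempered generalized function: the functionals $\ell_1,\ell_2$ are only continuous, not smooth, vectors for the dual representations, so one must exploit the full force of the class $\CFH$ hypothesis---moderate growth, admissibility, and Casselman--Wallach-type automatic continuity---to control the resulting object and to establish the injectivity of the bilinear assignment in each variable. This delicate analysis is precisely the technical content handled in \cite{SZ08}, cited in the statement.
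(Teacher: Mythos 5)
The paper does not give a proof of this proposition: it is stated as a (generalized) Gelfand--Kazhdan criterion with a pointer to \cite{SZ08} for the details, so there is no in-paper argument to compare your proposal against. Your sketch is indeed the standard Gelfand--Kazhdan bootstrap, which is what \cite{SZ08} carries out: from $\ell_1,\ell_2$ one forms the generalized matrix coefficient $T_{\ell_1,\ell_2}\in\con^{-\xi}(G)$ (using automatic continuity and moderate growth coming from the Casselman--Wallach structure of $\CFH$, to make sense of $\ell_1(\pi_1(\phi)\ell_2)$ with $\ell_2$ viewed as a distribution vector in the completion of $V_1$), one checks the two equivariances and the $\oU(\gC)^G$-eigenproperty, one invokes the hypothesis to get $T_{\ell_1,\ell_2}(x^\tau)=T_{\ell_1,\ell_2}(x)$, and one converts that symmetry into a relation between $\ell_1$ and $\ell_2$ via Schur's lemma.

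Two places where your outline should be tightened if you were to actually carry it out. First, in Step 1 the phrase ``by a suitable averaging/duality'' elides exactly the content that is not formal: one needs the Casselman--Wallach globalization (admissibility plus moderate growth) to show that $\pi_1(\phi)$ maps the continuous dual into $V_1$ and that the resulting $T_{\ell_1,\ell_2}$ is tempered, and one needs a density/irreducibility argument for the separate injectivity in $\ell_1$ and in $\ell_2$. Second, in Step 2 the chain ``$\tau$-symmetry $\Rightarrow$ intertwiner $V_1^\tau\to V_2^\vee$ $\Rightarrow$ Schur $\Rightarrow$ done'' should be phrased so that the intertwiner is \emph{produced by} the nonvanishing $\tau$-symmetric $T_{\ell_1,\ell_2}$ (one should check that $T_{\ell_1,\ell_2}(x^\tau)$ is itself a generalized matrix coefficient attached to the $\tau$-twisted, role-swapped data $(\ell_2,\ell_1)$, and then compare using Step 1's injectivity plus Schur). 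The remark that $V_1^\tau$ has the same infinitesimal character as $V_2$ is true but by itself gives no isomorphism, so it should not be allowed to do any logical work. With these points made explicit, your sketch matches the expected argument.
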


\vsp Now we finish the proof of Theorem \ref{thm:mainA}. Assume that
$V_1=V$ is an irreducible representation of
$G$ in the class $\CFH$. Define the irreducible representation
$V_2$ of $G$ in the class $\CFH$ as follows. The representation $V_2$ equals to $V$ as a
topological vector space, and the action $\rho_2$ of $G$ on $V_2$ is
given by
\[
  \rho_2(g)v=\rho_1(g^{-\tau})v,\quad g\in G, v\in V,
\]
where $\rho_1$ is the action of $G$ on $V_1$. Using character theory
and the fact that $g$ is always conjugate to $g^\tau$, we conclude
that $V_1$ and $V_2$ are contragredient to each other \cite[Theorem
2.4.2]{ADE}. Now let
\[
  S_1=S,\quad S_2=S^\tau,\quad \chi_{S_1}=\chi_S,
\]
and
\[
  \chi_{S_2}(g)=\chi_S(g^{-\tau}),\quad g\in S_2.
\]
Theorem \ref{thm:mainB} says that the assumption of Proposition
\ref{gelfand} is satisfied, and so
\[
\dim \Hom_{S_1}(V_1, \C_{\chi_{S_1}}) \, \dim
\Hom_{S_2}(V_2,\C_{\chi_{S_2}})\leq 1.
\]
Note that by the identification $V_1=V_2=V$ as well as the explicit
actions, we have
\[
 \Hom_{S_1}(V_1, \C_{\chi_{S_1}})=
\Hom_{S_2}(V_2,\C_{\chi_{S_2}})=\Hom_{S}(V, \C_{\chi_{S}}).
\]
Hence
\[
  \dim \Hom_{S}(V, \C_{\chi_{S}})\leq 1,
\]
and the proof is complete.

\section{Some consequences}
\label{misc}

\subsection{Uniqueness of trilinear forms}
\label{sub-trilinear}

The following theorem is proved in \cite{Loke} (in an exhaustive
approach), and its p-adic analog was proved much earlier in
\cite[Theorem 1.1]{Prasad}.

\begin{Whittaker2}\label{Whittaker2}
Let $V$ be an irreducible representation of
$\GL_{2}(\K)\times \GL_{2}(\K)\times\GL_{2}(\K)$ in the class $\CFH$. Then
\[
   \dim \Hom_{\GL_{2}(\K)} (V,\C_{\chi_{2}})\leq 1.
\]
Here we view $\GL_{2}(\K)$ as the diagonal subgroup of
$\GL_{2}(\K)\times \GL_{2}(\K)\times\GL_{2}(\K)$, and $\chi
_2=\chi_{\K^\times}\circ \det$ is a character of $\GL_{2}(\K)$.
\end{Whittaker2}
\begin{proof} By the Gelfand-Kazhdan criterion, one just needs to show
the following: let $\GL_2(\K)\times \GL_2(\K)$ act on
\begin{equation*} \label{dm2}
  G_{2,2,2}=\GL_2(\K)\times \GL_2(\K)\times \GL_2(\K)
\end{equation*}
by
\[
  (g_1,g_2)(x,y,z)=(g_1 x g_2^\tau,g_1 y g_2^\tau, g_1 z g_2^\tau),\quad
  g_1,g_2\in \GL_2(\K).
\]
Denote by $\chi_{2,2}$ the character of $\GL_2(\K)\times \GL_2(\K)$
given by
\[
  \chi_{2,2}(g_1,g_2)=\chi_{\K^\times}(\det(g_1))\chi_{\K^\times}(\det(g_2)),
  \quad g_1,g_2\in \GL_2(\K).
  \]
Then for all $f\in \con^{-\xi}_{\chi_{2,2}}(G_{2,2,2})$, we have
\[f(x^{\tau},y^{\tau},z^{\tau})=f(x,y,z).
\]
To show the above, we observe that
$M_2=\GL_2(\K)\times\GL_2(\K)\times \{I_2\}$ is a $\GL_2(\K)\times
\GL_2(\K)$ slice of $G_{2,2,2}$, which is stable under $H_2=\{(x,
x^{-\tau})\mid x\in \GL_2(\K)\}\subset \GL_2(\K)\times \GL_2(\K)$
and $\tau$. The result then follows from Proposition \ref{m2}.
\end{proof}

As noted near the end of Section \ref{theoremB}, if we replace
$\GL_2(\K)$ by $\H^\times$, the analog of Proposition \ref{m2} still
holds (again by using Proposition \ref{first-occur}). Thus the
analog of Theorem \ref{Whittaker2} for $\H^\times$ holds. Of course
this is well-known and easier.

\subsection{Uniqueness of the Jacquet-Shalika model for
$\GL_3(\K)$}\label{sub-gl3} \vsp

Let $L_3$ and $N_3$ be the subgroups of $\GL_3(\K)$, as in Section
\ref{sgl3}. Write $S_3=L_3 N_3$, and
\[
   \chi_{S_3} \left(\,
   \left[\begin{array}{ccc} 1&0&d\\ 0&1&c\\0&0&1\\
                   \end{array}\right]
                   \left[\begin{array}{ccc} a&0&0\\ 0&1&0\\0&0&a\\
                   \end{array}\right]
                   \right)=\chi_{\K^\times}(a)\psi_{\K}(d),
\]
which defines a character of $S_3$.

\begin{Whittaker3}\label{Whittaker3}
Let $V$ be an irreducible representation of
$\GL_{3}(\K)$ in the class $\CFH$. Then
\[
   \dim \Hom_{S_3} (V,\C_{\chi_{S_3}})\leq 1.
\]
\end{Whittaker3}
\begin{proof} As a corollary of Proposition \ref{m32}, we know that if $f\in
\con^{-\xi}(\GL_{3}(\K))$ satisfies
\[
  f(sx)=f(xs^\tau)=\chi_{S_3}(s)f(x),\,\, \textrm{for all $s\in
  S_3$},
\]
then
\[
  f(x^\tau)=f(x).
\]
The theorem then follows, as in Section \ref{theoremA}.
\end{proof}

We remark that the p-adic analog of Theorem \ref{Whittaker3} holds
true, as the same proof goes through.

\vsp \noindent {\bf Remark}: By inducing the character
$\chi_{S_3}$ to a Heisenberg group, one may obtain uniqueness of
the Fourier-Jacobi model for $\GL_3(\K)$.

\subsection{Uniqueness of a certain model for $\GL_4(\K)\times
\GL_2(\K)$} \label{sub-gl4} \vsp

Recall from the Introduction:
\[
   S_{4,2}=(\GL_4(\K)\times \GL_2(\K))\cap S =\left\{\,\left[\begin{array}{ccc} a&b&0\\ 0&a&0\\0&0&a\\
                   \end{array}\right]\in G\right\},
\]
and $\chi_{S_{4,2}}=\chi _S|_{S_{4,2}}$.

\begin{Whittaker4}\label{Whittaker4}
Let $V$ be an irreducible representation of
$\GL_{4}(\K)\times \GL_2(\K)$ in the class $\CFH$. Then
\[
   \dim \Hom_{S_{4,2}} (V, \C_{\chi_{S_{4,2}}})\leq 1.
\]
\end{Whittaker4}
\begin{proof}
Denote by $\Delta_{4,2}$ the Casimir operator on $\GL_4(\K)\times
\GL_2(\K)$ associated to the real trace form. Arguing as in Section
\ref{theoremA}, we will just need to show that, if $f\in
\con^{-\xi}(\GL_4(\K)\times \GL_2(\K))$ is an eigenvector of
$\Delta_{4,2}$, and if
\[
  f(sx)=f(xs^\tau)=\chi_{S_{4,2}}(s)f(x),\,\, \textrm{for all $s\in
  S_{4,2}$},
\]
then
\[
  f(x^\tau)=f(x).
\]

To conclude the above, we further assume that $f(x^\tau)=-f(x)$. We
need to show that $f=0$.

Denote
\[
 C_{4,2}=\left\{\,\left[\begin{array}{ccc} a_{11}&a_{12}&0\\
    a_{21}&a_{22}&0\\
     0&0&y
                   \end{array}\right]\in \GL_4(\K)\times \GL_2(\K)\mid
   \textrm{$y^{-1}a_{22}$ is nilpotent}\right\}.
\]
This is the union of $Z_4$ (in Section \ref{sm4}) and $Z_4'$, where
\[
   Z'_4=\left\{\,\left[\begin{array}{ccc} a_{11}&a_{12}&0\\
       a_{21}&0&0\\
       0&0&y\\
      \end{array}
          \right]\in \GL_4(\K)\times \GL_2(\K)\,\right\}.
\]
By using Proposition \ref{m2} and Proposition \ref{m3}, we first
show that $f$ is supported in $C_{4,2}$. Proposition \ref{m4}
further implies that $f$ can only be supported in $Z_4'$.

Now set
\[
  x_{4,\oleft}=\left[
               \begin{array}{ccc}
                  0&I_2&0\\
                  0&0&0\\
                  0&0&0\\
                \end{array}
            \right]\in \gl_4(\K)\times \gl_2(\K),
\]
and denote by $X_{4,\oleft}$ the left invariant vector field on
$\GL_4(\K)\times \GL_2(\K)$ whose tangent vector at $x\in G$ is
$xx_{4,\oleft}$. As in Section \ref{smallsub}, one checks that
$X_{4,\oleft}$ is transversal to $Z_4'$. We may then conclude that
$f=0$, as in Section \ref{theoremB}.
\end{proof}

\subsection{Uniqueness of Whittaker models} \label{sub-whittaker}

Let $\mathbf{G}$ be a quasisplit connected reductive algebraic group
defined over $\R$. Let $\mathbf{B}$ be a Borel subgroup of
$\mathbf{G}$, with unipotent radical $\mathbf{N}$. Let
\[
  \chi_\mathbf{N}:\mathbf{N}(\R)\rightarrow \C^\times
\]
be a generic unitary character. The meaning of ``generic" will be
explained later in the proof.

The following theorem is fundamental and well-known. For
$\mathbf{G}=\GL_n$, this is a celebrated result of Shalika
\cite{Shalika}. A proof in general may be found in \cite[Theorem
9.2]{CHM}. We shall give a short proof based on the notion of
unipotent $\chi$-incompatibility.

\begin{Whittaker}
Let $V$ be an irreducible representation of
$\mathbf{G}(\R)$ in the class $\CFH$. Then
\[
   \dim \Hom_{\mathbf{N}(\R)} (V,\C_{\chi_{\mathbf{N}}})\leq 1.
\]
\end{Whittaker}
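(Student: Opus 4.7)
The plan is to combine the Gelfand--Kazhdan criterion (Proposition~\ref{gelfand}) with the unipotent $\chi$-incompatibility machinery of Section~\ref{UXM}. First I would fix a Chevalley anti-involution $\tau$ of $\mathbf{G}$ that stabilizes a pinning $(\mathbf{B},\mathbf{T},\chi_{\mathbf{N}})$, preserves $\mathbf{N}$, and fixes a representative $w_0$ of the longest Weyl element; such a $\tau$ always exists on a quasisplit reductive group (for $\GL_n$ it is $x \mapsto w_0 x^T w_0^{-1}$). By Proposition~\ref{gelfand} it suffices to show that any $\oU(\gC)^{\mathbf{G}}$-eigenvector $f \in \con^{-\xi}(\mathbf{G}(\R))$ satisfying
\[
 f(n_1 x n_2) \;=\; \chi_{\mathbf{N}}(n_1)\,\chi_{\mathbf{N}}(n_2)\, f(x), \qquad n_1, n_2 \in \mathbf{N}(\R),
\]
(where the right-hand factor uses $\chi_{\mathbf{N}} \circ \tau = \chi_{\mathbf{N}}$) obeys $f \circ \tau = f$. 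Setting $\tilde f := f - f \circ \tau$, which inherits the same bi-equivariance and additionally satisfies $\tilde f \circ \tau = -\tilde f$, the task reduces to showing $\tilde f = 0$.

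Next I would stratify $\mathbf{G}(\R)$ by Bruhat cells $C_w := \mathbf{B}(\R)\, w\, \mathbf{B}(\R)$, $w \in W$; each $C_w$ is stable under the action of $H := \mathbf{N}(\R) \times \mathbf{N}(\R)$ given by $(n_1, n_2) \cdot x = n_1 x n_2$, with character $\chi := \chi_{\mathbf{N}} \otimes \chi_{\mathbf{N}}$. On the open cell $C_{w_0}$, a direct Gauss decomposition shows the $H$-action has trivial stabilizer on the slice $w_0 \mathbf{T}(\R)$, and a short calculation gives $\tau(w_0 t) = w_0 t$ for all $t \in \mathbf{T}(\R)$ (using $\tau(w_0) = w_0$ together with $\tau|_{\mathbf{T}}(t) = w_0 t w_0^{-1}$). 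Since $\tilde f$ on $C_{w_0}$ is determined by its restriction to $w_0 \mathbf{T}(\R)$ via the bi-equivariance, the $\tau$-antisymmetry forces $\tilde f|_{w_0 \mathbf{T}(\R)} = 0$, hence $\tilde f = 0$ on $C_{w_0}$.

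For each non-open cell $C_w$ with $w \neq w_0$, I would show $C_w$ is unipotently $\chi$-incompatible (possibly after excising a lower-dimensional locus that is absorbed into deeper Bruhat strata). Since $w \neq w_0$, there exists a simple root $\alpha \in \Pi$ with $w^{-1}(\alpha) > 0$, so $\mathbf{U}_\alpha \subseteq \mathbf{N} \cap w \mathbf{N} w^{-1}$. For each $x \in C_w$ the pair
\[
 \phi_s(x) \;:=\; \bigl(u_\alpha(s),\; x^{-1} u_\alpha(s)^{-1} x\bigr) \;\in\; H
\]
is a Nash unipotent element stabilizing $x$, and
\[
 \chi(\phi_s(x)) \;=\; \chi_{\mathbf{N}}(u_\alpha(s)) \cdot \chi_{\mathbf{N}}(x^{-1} u_\alpha(s)^{-1} x).
\]
By genericity of $\chi_{\mathbf{N}}$ (nontriviality on every simple root subgroup), this is nontrivial in $s$ on an open dense subset of the slice $w \mathbf{T}(\R) \subset C_w$: it is outright nontrivial whenever $w^{-1}(\alpha)$ is non-simple (the second factor is then $\equiv 1$), and in the remaining case the cancellation set is a proper hypersurface in the torus direction. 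Applying Lemma~\ref{localization3} on the complement and descending along the Bruhat order $\overline{C_w} = \bigsqcup_{w' \le w} C_{w'}$, one propagates $\tilde f = 0$ from $C_{w_0}$ inward to all of $\mathbf{G}(\R)$.

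The main obstacle will be the degenerate case where $w^{-1}$ sends some simple root back into the simple root system: then both factors of $\chi(\phi_s(x))$ are nontrivial and may conspire to vanish along a codimension-one subvariety of the torus direction of $C_w$. That bad locus is of smaller dimension than $C_w$ and can be absorbed into lower Bruhat cells processed at a later stage of the induction, but verifying this descent terminates cleanly---and that each pass produces a genuine local $H$-slice with a smooth $\phi$ satisfying the nontriviality condition---is the technical heart of the argument.
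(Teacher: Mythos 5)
Your proposal takes a genuinely different route from the paper. The paper does \emph{not} use the Gelfand--Kazhdan criterion for this theorem; instead it passes to the class $\CDH$, invokes Casselman's subrepresentation theorem to reduce to distributional principal series $U(\chi_{\mathbf{T}})$, reformulates the Whittaker multiplicity as a question about $\con^{-\infty}_{\chi_{\mathbf{G}}}(\mathbf{G}(\R))$ with $H_{\mathbf{G}} = \bar{\mathbf{B}}(\R)\times\mathbf{N}(\R)$, and then filters by the $\bar{\mathbf{B}}$-$\mathbf{N}$ Bruhat decomposition. The crucial feature of that choice is that each cell $\bar{\mathbf{B}}(\R)\,w\,\mathbf{N}(\R)$ is a \emph{single} $H_{\mathbf{G}}$-orbit, so one can take the one-point slice $\{w\}$ and exhibit, for any $w\neq e$, a stabilizing pair $(\bar n, w^{-1}\bar n w)$ with $\chi_{\mathbf{G}}$-value $\chi_{\mathbf{N}}(w^{-1}\bar n w)\neq 1$; since $\chi_{\mathbf{G}}$ vanishes on $\bar{\mathbf{N}}(\R)$ there is no possibility of cancellation between the two factors. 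By contrast, your $\mathbf{N}\times\mathbf{N}$ picture with $\mathbf{B}$-$\mathbf{B}$ cells has a whole torus direction of orbits inside each $C_w$, and that is precisely where the trouble enters.

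The concrete gap is in the non-open-cell step. When a simple root $\alpha$ satisfies $w^{-1}\alpha=\beta$ \emph{simple}, a short computation with $x = wt$ gives $\chi(\phi_s(wt)) = \psi\bigl(s\,(a_\alpha - a_\beta c\,\beta(t)^{-1})\bigr)$ for nonzero constants $a_\alpha,a_\beta,c$ determined by $\chi_{\mathbf{N}}$ and the chosen isomorphisms $u_\alpha,u_\beta$. The vanishing locus $\{t : \beta(t) = a_\beta c/a_\alpha\}$ is a hypersurface in the torus direction of $C_w$, not in any lower Bruhat cell: it is an $H$-stable closed subset of $C_w$ itself, and so it is \emph{not} absorbed into later stages of a Bruhat-order induction as you suggest. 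To rescue the argument one would need either a second unipotent direction with a disjoint bad locus (not always available), or a separate metrical-properness/transversality step for the bad hypersurface, neither of which is supplied. Your proposal is therefore honest in flagging this as the technical heart, but as written it does not close. The paper's switch from $\mathbf{N}\times\mathbf{N}$ to $\bar{\mathbf{B}}\times\mathbf{N}$ is not cosmetic: it is exactly what eliminates the degenerate case, and it is worth internalizing this as a design principle when applying the unipotent $\chi$-incompatibility lemma to mixed or Whittaker-type models.

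Two minor points. First, for the open cell your reduction to the pointwise $\tau$-fixed slice $w_0\mathbf{T}(\R)$ is sound, but one should verify that $\tau$ normalizes $H$ and intertwines the restriction map of Lemma~\ref{restriction}; this does hold once $\tau(\mathbf{N})=\mathbf{N}$ and $\tau(w_0)=w_0$, as you arrange. Second, the Gelfand--Kazhdan route does buy something the paper's argument does not directly give, namely a uniform statement for any anti-automorphism $\tau$ at the cost of the extra hypotheses in Proposition~\ref{gelfand}; but since the paper's proof both avoids the need for $\mathfrak{Z}(\gC)$-finiteness on the distribution side (Lemma~\ref{localization3} is purely algebraic) and handles the cells in one sweep, it is both shorter and more robust than what your route would yield even after fixing the gap.
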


\begin{proof} We say that a representation is in the class $\CDH$ if it is the
strong dual of a representation in the class $\CFH$. The current
theorem can then be reformulated as follows: the space
\[
 U^{\chi_\mathbf{N}^{-1}}=\{u\in U\mid gu=
 \chi_\mathbf{N}^{-1}(g)u\,\textrm{ for all } g\in \mathbf{N}(\R)\}
\]
is at most one dimensional for every irreducible representation $U$ of $\mathbf{G}(\R)$ in the class $\CDH$.

Let $\bar{\mathbf{B}}$ be a Borel subgroup opposite to $\mathbf{B}$,
with unipotent radical $\bar{\mathbf{N}}$. Then
$\mathbf{T}=\mathbf{B}\cap \bar{\mathbf{B}}$ is a maximal torus. Let
\[
  \chi_\mathbf{T}:\mathbf{T}(\R)\rightarrow \C^\times
\]
be an arbitrary character. Then
\[
\begin{aligned}
  U(\chi_\mathbf{T})=&\{f\in\con^{-\infty}(\mathbf{G}(\R))\mid f(t\bar{n}x)
  =\chi_\mathbf{T}(t)f(x)\, \\
     & \textrm{ for all } t\in \mathbf{T}(\R), \bar{n}\in \bar{\mathbf{N}}(\R)\}
\end{aligned}
\]
is the distributional version of nonunitary principal series
representations. By Casselman's subrepresentation theorem (in the
category of representations in the class $\CDH$), it
suffices to show that
\begin{equation}\label{dimu}
  \dim U(\chi_\mathbf{T})^{\chi_\mathbf{N}^{-1}}\leq 1, \quad
  \text{for any $\chi_\mathbf{T}$}.
\end{equation}

Let
\[
   H_\mathbf{G}=\bar{\mathbf{B}}(\R)\times \mathbf{N}(\R),
\]
which acts on $\mathbf{G}(\R)$ by
\[
   (\bar{b}, n)x=\bar{b}xn^{-1}.
\]
Write
\[
  \chi_\mathbf{G}(t\bar{n},n)=\chi_\mathbf{T}(t) \chi_\mathbf{N}(n),
\]
which defines a character of $H_\mathbf{G}$. Then (\ref{dimu}) is
equivalent to
\begin{equation}\label{dimcong}
   \dim \con^{-\infty}_{\chi_\mathbf{G}}(\mathbf{G}(\R))\leq 1.
\end{equation}

Let $W$ be the Weyl group of $\mathbf{G}(\R)$ with respect to
$\mathbf T$.
We have the Bruhat decomposition
\[
  \mathbf{G}(\R)=\bigsqcup_{w\in W} \mathbf{G}_w,\quad
  \textrm{with}\quad
  \mathbf{G}_w=\bar{\mathbf{B}}(\R)w \mathbf{N}(\R).
\]
From this we form a $H_\mathbf{G}$ stable filtration
\[
  \emptyset=\mathbf{G}^0\subset
  \mathbf{G}^1\subset
  \mathbf{G}^2\subset\cdots\subset \mathbf{G}^r=\mathbf{G}(\R)
\]
of $\mathbf{G}(\R)$ by open subsets, with
$\mathbf{G}^1=\bar{\mathbf{B}}(\R)\mathbf{N}(\R)$ and every
difference $\mathbf{G}^i\setminus \mathbf{G}^{i-1}$ a Bruhat cell
$\mathbf{G}_w$, for $i\geq 2$.

Clearly (\ref{dimcong}) is implied by the following two assertions:
\begin{equation}\label{dimcong2}
  \dim \con^{-\infty}_{\chi_\mathbf{G}}(\mathbf{G}^1)=1;
\end{equation}
and
\begin{equation}\label{dimcong3}
  \textrm{if $f\in
\con^{-\infty}_{\chi_\mathbf{G}}(\mathbf{G}^{i})$ vanishes on
$\mathbf{G}^{i-1}$, then $f=0$,}
\end{equation}
for $i\geq 2$. The equality (\ref{dimcong2}) is clear as
$\mathbf{G}^1=\bar{\mathbf{B}}(\R)\mathbf{N}(\R)$. For
(\ref{dimcong3}), we write
\[
  \mathbf{G}^i\setminus \mathbf{G}^{i-1}=\mathbf{G}_w, \quad
  \textrm{with $w$ a non-identity element of $W$}.
\]
The genericity means that $\chi_\mathbf{N}$ has nontrivial
restriction to $\mathbf{N}(\R)\cap w^{-1}(\bar{\mathbf{N}}(\R))w$.
Pick
\[
 n=w^{-1}\bar{n} w\in \mathbf{N}(\R)\cap
w^{-1}(\bar{\mathbf{N}}(\R))w
\]
so that $\chi_\mathbf{N}(n)\neq 1$. Then $(\bar{n},n)\in
H_\mathbf{G}$ satisfies
\[
  (\bar{n},n)w=w, \quad \textrm{and  }
  \chi_\mathbf{G}(\bar{n},n)=\chi_{\mathbf{N}}(n)\neq 1.
\]
Consequently, $\mathbf{G}_w$ is unipotently
$\chi_\mathbf{G}$-incompatible. Now (\ref{dimcong3}) follows from
Lemma \ref{localization3}.
\end{proof}


\begin{thebibliography}{AAAAA}


\bibitem[AG1]{AG-Nash}
A. Aizenbud and D. Gourevitch, \textit{Schwartz functions on Nash
manifolds}, Int. Math. Res. Notices 2008, doi:10.1093/imrn/rnm155.

\bibitem[AG2]{AG2}
A. Aizenbud and D. Gourevitch, \textit{Generalized Harish-Chandra
descent, Gelfand pairs, and an Archimedean analog of
Jacquet-Rallis's theorem}, Duke Math. J. 149, no. 3 (2009),
509-567.

\bibitem[AG3]{AG-multiplicity}
A. Aizenbud and D. Gourevitch, \textit{Multiplicity one theorem
for $(\GL_{n+1}(\R),\GL_n(\R))$}, Selecta Mathematica, Vol. 15,
no. 2. (2009), 271-294.

\bibitem[AGJ09]{AGJ}
A. Aizenbud, D. Gourevitch and H. Jacquet, \textit{Uniqueness of
Shalika functionals: the archimedean case}, Pacific J. Math., 243
(2009), No.2, 201-212.


\bibitem[AGRS]{AGRS}
A. Aizenbud, D. Gourevitch, S. Rallis and G. Schiffmann,
\textit{Multiplicity One Theorems}, arXiv:0709.4215, to appear in
Ann. Math..

\bibitem[AGS07]{ADE} A. Aizenbud, D. Gourevitch and E. Sayag,
\textit{$(\GL(n+1,F),\GL(n,F))$ is a Gelfand pair for any local
field $F$}, Compositio Math., 144 (2008), no. 6, 1504-1524.

\bibitem[BR07]{BR07}
E. M. Baruch and S. Rallis, \textit{On the uniqueness of Fourier
Jacobi models for representations of $U(n,1)$}, Representation
Theory 11 (2007), 1-15.

\bibitem[C89]{Cass}
W. Casselman, \textit{Canonical extensions of Harish-Chandra modules
to representations of $G$}, Canad. Jour. Math. 41 (1989), 385--438.

\bibitem[CHM00]{CHM}
W. Casselman, H. Hecht, and D. Mili$\breve{c}i\acute{c}$,
\textit{Bruhat filtrations and Whittaker vectors for real groups},
The Mathematical Legacy of Harish-Chandra, Proc. Symp. Pure Math.,
vol. 68, Amer. Math. Soc,. Providence, RI, 2000.


\bibitem[C91]{du}
F. du Cloux, \textit{Sur les reprsentations diffrentiables des
groupes de Lie algbriques}, Ann. Sci. Ecole Norm. Sup. 24 (1991),
no. 3, 257-318.

\bibitem[GGP09]{GGP09}
W. Gan, B. Gross and D. Prasad,
\textit{Symplectic local root numbers, central critical L-values, and restriction problems in the representation theory of classical groups}.
 arXiv:0909.2999

\bibitem[GJ]{GJ}
D. Ginzburg and D. Jiang, \textit{Notes on Periods of Eisenstein
series on $E_6$}, in preparation.

\bibitem[GJR04]{GJR04}
D. Ginzburg, D. Jiang and S. Rallis, \textit{On the nonvanishing of
the central value of the Rankin-Selberg $L$-functions}, Jour. Amer.
Math. Soc.  17  (2004),  no. 3, 679--722.

\bibitem[GJR05]{GJR05}
D. Ginzburg, D. Jiang and S. Rallis, \textit{On the nonvanishing of
the central value of the Rankin-Selberg $L$-functions. II},
Automorphic representations, $L$-functions and applications:
progress and prospects,  157--191, Ohio State Univ. Math. Res. Inst.
Publ., 11, de Gruyter, Berlin, 2005.

\bibitem[GJR09]{GJR09}
D. Ginzburg, D. Jiang and S. Rallis, \textit{Models for certain residual representations of unitary groups}.  Automorphic forms and $L$-functions I. Global aspects,  125--146, Contemp. Math., 488, Amer. Math. Soc., Providence, RI, 2009.

\bibitem[GR00]{GR00}
D. Ginzburg and S. Rallis, \textit{The exterior cube $L$-function
for ${\rm GL}(6)$}, Compositio Math.  123  (2000), no. 3,
243--272.


\bibitem[GN09]{GN}
H. Gl\"{o}ckner and K.-H. Neeb, \textit{Infinite-dimensional Lie
groups: General Theory and Main Examples}, Graduate Texts in
Mathematics, Springer-Verlag, book to appear.

\bibitem[GP92]{GP92}
B. Gross and D. Prasad, \textit{On the decomposition of a
representation of ${\rm SO}\sb n$ when restricted to ${\rm SO}\sb
{n-1}$}, Canad. Jour. Math. 44 (1992), no. 5, 974--1002.

\bibitem[GP94]{GP94}
B. Gross and D. Prasad,\textit{On irreducible representations of
${\rm SO}\sb {2n+1}\times{\rm SO}\sb {2m}$}, Canad. Jour. Math.  46
(1994), no. 5, 930--950.



\bibitem[HK04]{HK04}
M. Harris and S. Kudla, \textit{On a conjecture of Jacquet},
Contributions to automorphic forms, geometry, and number theory,
355--371, Johns Hopkins Univ. Press, Baltimore, MD, 2004.






\bibitem[JR96]{JR96}
H. Jacquet and S. Rallis, \textit{Uniquesness of linear periods},
Compositio Math. 102 (1996), no. 1, 65--123.

\bibitem[JS90]{JS90}
H. Jacquet and J. Shalika, \textit{Exterior square $L$-functions},
Automorphic forms, Shimura varieties, and $L$-functions, Vol. II
(Ann Arbor, MI, 1988),  143--226, Perspect. Math., 11, Academic
Press, Boston, MA, 1990.

\bibitem[J08]{J08}
D. Jiang, \textit{Residues of Eisenstein series and related
problems}, Eisenstein Series and Applications, Progress in Math,
258, 187--204, 2008.

\bibitem[JSZ09]{JSZ09}
D. Jiang, B. Sun and C.-B. Zhu, \textit{Uniqueness of Bessel models: the archimedean case},
to appear in Geometric and Functional Analysis.


\bibitem[L01]{Loke}
H.Y. Loke, \textit{Trilinear forms of $\g\l 2$}, Pac. Jour. Math.
197 (2001), 119--144.

\bibitem[LZ97]{LZ97}
S.T. Lee and C.-B. Zhu, \textit{Degenerate principal series and
local theta correspondence II}, Israel Jour. Math. 100 (1997),
29-59.


\bibitem[N06]{Nien}
C. Nien, \textit{Models of representations of general linear groups
over p-adic fields}, PhD Thesis, University of Minnesota, 2006.


\bibitem[P90]{Prasad}
D. Prasad, \textit{Trilinear forms for representations of GL(2) and
local epsilon factors}, Compositio Math. 75 (1990), 1--46.

\bibitem[Pr89]{Pr89}
T. Przebinda, \textit{The oscillator duality correspondence for the
pair $(O(2,2), Sp(2,\R))$}, Mem. Amer. Math. Soc. 403 (1989),
1--105.





\bibitem[S74]{Shalika}
 J.A. Shalika, \textit{Multiplicity one theorem for $GL_n$}, Ann. Math. 100
(1974), 171--193.

\bibitem[S87]{Sh}
M. Shiota, \textit{Nash manifolds}, Lect. Notes Math., vol. 1269,
Springer-Verlag, 1987.

\bibitem[SZ08]{SZ08}
B. Sun and C.-B. Zhu, \textit{A general form of Gelfand-Kazhdan criterion},
http://www.math.nus.edu.sg/$\sim$matzhucb/publist.html, 2008.

\bibitem[SZ]{SZ}
B. Sun and C.-B. Zhu, \textit{Multiplicity One Theorems: the
Archimedean case},
http://www.math.nus.edu.sg/$\sim$matzhucb/publist.html,  2008.


\bibitem[T67]{Tr}
F. Treves, \textit{Topological vector spaces, distributions and
kernels}, Academic Press, New York, 1967.

\bibitem[W88]{W1}
N. Wallach, \textit{Real Reductive Groups I}, Academic Press, San
Diego, 1988.

\bibitem[W92]{W2}
N. Wallach, \textit{Real Reductive Groups II}, Academic Press, San
Diego, 1992.




\end{thebibliography}
\end{document}